\newcommand*\EE[1]{\ensuremath{{\mathbb{#1}}}}
\newcommand*\bb[1]{\ensuremath{\mathbf{#1}}}
\newcommand*\dd{\ensuremath{{\mathrm{d}}}}
\newtheorem{theorem}{Theorem}
\newtheorem{proposition}{Proposition}
\newtheorem{corollary}{Corollary}
\newtheorem{lemma}{Lemma}
{\theoremstyle{definition}
\newtheorem{assumption}{Assumption}
\newtheorem{remark}{Remark}
}
\newcommand{\N}[0]{\mathbb{N}}
\newcommand{\R}[0]{\mathbb{R}}
\newcommand{\C}[0]{\mathbb{C}}
\newcommand{\bbE}{\mathbb{E}}
\newcommand{\bsa}{\mathbf{a}}
\newcommand{\bsc}{\mathbf{c}}
\newcommand{\bsx}{\mathbf{x}}
\newcommand{\bsz}{\mathbf{z}}
\newcommand{\bsZ}{\mathbf{Z}}
\newcommand{\bsomega}{\boldsymbol{\omega}}
\newcommand{\bstau}{\boldsymbol{\tau}}
\newcommand{\bsDelta}{\boldsymbol{\Delta}}
\newcommand{\calA}{\mathcal{A}}
\newcommand{\calB}{\mathcal{B}}
\newcommand{\calE}{\mathcal{E}}
\newcommand{\calF}{\mathcal{F}}
\newcommand{\amin}{a_\mathrm{min}}
\newcommand{\amax}{a_\mathrm{max}}
\newcommand{\kapmin}{\kappa_\mathrm{min}}
\newcommand{\kapmax}{\kappa_\mathrm{max}}
\newcommand{\lambdahat}{\widehat{\lambda}}
\newcommand{\rd}{\mathrm{d}}
\newcommand{\dist}{\mathrm{dist}}
\newcommand{\length}{\mathrm{length}}
\renewcommand{\Im}{\mathrm{Im}}
\renewcommand{\Re}{\mathrm{Re}}
\DeclareFontFamily{U}{mathx}{\hyphenchar\font45}
\DeclareFontShape{U}{mathx}{m}{n}{
      <5> <6> <7> <8> <9> <10>
      <10.95> <12> <14.4> <17.28> <20.74> <24.88>
      mathx10
      }{}
\DeclareSymbolFont{mathx}{U}{mathx}{m}{n}
\DeclareMathAccent{\widecheck}{0}{mathx}{"71}
\let\@fnsymbol\@arabic
\title{Multilevel Monte Carlo methods for stochastic convection-diffusion eigenvalue problems}
\date{\today}
\author{Tiangang Cui\footnotemark[1] \and
        Hans De Sterck\footnotemark[2]\and
	    Alexander D.~Gilbert\footnotemark[3] \and
	    Stanislav Polishchuk\footnotemark[4] \and
        Robert Scheichl\footnotemark[5]
}
\begin{document}

\footnotetext[1]{School of Mathematics and Statistics, The University of Sydney, NSW 2006, Australia. \\\texttt{tiangang.cui@sydney.edu.au}
}

\footnotetext[2]{Department of Applied Mathematics, University of Waterloo,
Ontario, Canada N2L 3G1.\\\texttt{hdesterck@uwaterloo.ca}
}

\footnotetext[3]{School of Mathematics and Statistics, University of New South Wales, Sydney, NSW 2052, Australia. \\ \texttt{alexander.gilbert@unsw.edu.au}
}

\footnotetext[4]{School of Mathematics, Monash University, Victoria 3800, Australia. \\\texttt{stanislav.polishchuk@monash.edu}
}
\footnotetext[4]{
Institute of Applied Mathematics and Interdisciplinary Center for Scientific Computing (IWR), Universität Heidelberg, Im Neuenheimer Feld 205,
69120 Heidelberg, Germany.
\\
\texttt{r.scheichl@uni-heidelberg.de}
}

\maketitle

\begin{abstract}
We develop new multilevel Monte Carlo (MLMC) methods to estimate the expectation of the smallest eigenvalue of a stochastic convection-diffusion operator with random coefficients. The MLMC method is based on
a sequence of finite element (FE) discretizations of the eigenvalue problem on a hierarchy of 
increasingly finer meshes.
For the discretized, algebraic eigenproblems we use both
the Rayleigh quotient (RQ) iteration and implicitly restarted Arnoldi (IRA), providing an analysis of the cost in each case.
By studying the variance on each level and adapting classical FE error bounds to the stochastic setting, 
we are able to bound the total error
of our MLMC estimator and provide
a complexity analysis. 
As expected, the complexity bound for our MLMC estimator is superior to plain Monte Carlo.
To improve the efficiency of the MLMC further, we exploit the hierarchy of meshes and use coarser approximations as starting values for the eigensolvers on finer ones.
To improve the stability of the MLMC
method for convection-dominated problems, we employ two additional strategies.
First, we consider the
streamline upwind Petrov--Galerkin formulation of the discrete eigenvalue problem, which allows us to start the MLMC method on coarser meshes than is possible with standard FEs.
Second, we apply a homotopy method to add stability to the eigensolver for each sample.
Finally, we present a multilevel quasi-Monte Carlo method that replaces 
Monte Carlo with a quasi-Monte Carlo (QMC) rule on each level.
Due to the faster convergence of QMC, this improves the overall complexity.
We provide detailed numerical results
comparing our different strategies
to demonstrate the practical feasibility
of the MLMC method in different use cases.
The results support our complexity analysis and further demonstrate the superiority over plain Monte Carlo 
in all cases.

\textbf{Keywords:} convection-diffusion eigenvalue problems, multilevel Monte Carlo, uncertainty quantification, homotopy.

\end{abstract}

\section{Introduction}
\label{sec:int}
We consider the following convection-diffusion eigenvalue problem with random coefficients:
Find a non-trivial eigenpair $(\lambda,u)\in\mathbb{C}\times H^1_0(D;\mathbb{C})$
such that
\begin{align}
\label{convdiffeq}
	-\nabla\cdot\big(\kappa(\mathbf{x},\bsomega)\nabla u(\mathbf{x},\bsomega)\big)
	+ \mathbf{a}(\mathbf{x},\bsomega)\cdot\nabla u(\mathbf{x},\bsomega)
	 &= \lambda(\bsomega) u(\mathbf{x},\bsomega).
\end{align}
The PDE is considered for the \emph{physical variable} $\bsx$ in a bounded Lipschitz domain $D\in\mathbb{R}^d$ with $d=1,2,$ or $3$, 
and for the \emph{stochastic variable} $\bsomega$ from a given probability space $(\Omega,\calF,\pi)$.
For $\pi$-almost all $\bsomega \in \Omega$, 
we assume Dirichlet conditions on 
the boundary $\Gamma = \partial D$,
\begin{equation*}
	u(\bsx,\bsomega) = 0
 \quad \text{for } \bsx \in \Gamma.
\end{equation*}

The conductivity $\kappa(\mathbf{x},\bsomega):D\times\Omega\rightarrow\mathbb{R}$ is a log-uniform random field 
(as used in, e.g., \cite{DGLS19}), defined using the process convolution approach in~\cite{Higdon2002}, such that 
\begin{equation}
\label{eq:log-kap}
\log\kappa(\mathbf{x},\bsomega)=
Z(\bsx, \bsomega) = 
\sum_i\omega_ik(\mathbf{x}-\bsc_i),
\end{equation}
with $k(\bsx-\bsc_i)$ a kernel centered at a certain number of points $\bsc_i \in D$ and i.i.d. uniform random variables $\omega_i\sim\mathcal{U}[0,1]$. Similarly, the convection velocity $\mathbf{a}(\mathbf{x},\bsomega):D\times\Omega\rightarrow\mathbb{R}^d$ can also be some bounded random variable,
which also depends on uniform
random variables $\omega_i \sim \mathcal{U}[0, 1]$ and is additionally assumed to be divergence-free, i.e.,
\begin{equation}
\nabla\cdot\mathbf{a}(\mathbf{x},\bsomega) = 0.
\end{equation}

The purpose of this paper is to compute the expectation of the smallest eigenvalue of~\eqref{convdiffeq}, 
\begin{equation}
\label{eq:E[lambda]}
\mathbb{E}[\lambda]=\int_\Omega\lambda(\bsomega)\,\mathrm{d}\pi(\bsomega),
\end{equation}
using multilevel Monte Carlo methods.

Stochastic eigenvalue problems arise in a variety of physical and scientific applications and their numerical simulations.
Factors such as measurement noise, limitations of mathematical models, the existence of hidden variables, the randomness of input parameters, and other factors contribute to uncertainties in the modelling and prediction of many phenomena.
Applications of uncertainty quantification (UQ) specifically related to eigenvalue problems include:
nuclear reactor criticality calculations~\cite{Avramova2010,Ayres2012,nuclear_1976},
the derivation of the natural frequencies of an aircraft or a naval vessel~\cite{Kana2016}, band gap calculations in photonic crystals~\cite{Dobson2000,Giani2012,Scheichl2013},
the computation of ultrasonic resonance frequencies to detect the presence of gas hydrates~\cite{rus_hydrate}, the analysis of the elastic properties of crystals with the use of rapid measurements~\cite{migliori2016resonant,rus_crystals}, or the calculation of acoustic vibrations~\cite{Carnoy1983,Thomson1981}.
Stochastic convection-diffusion equations are used to describe simple cases of turbulent~\cite{drummond1984,kraich1970,morton1996,stynes2005} or subsurface flows~\cite{Tart2011,Zhang2002}.

Monte Carlo sampling is one of the most popular methods for quantifying uncertainties in quantities of interest coming from stochastic PDEs. 
Although simple and robust, Monte Carlo methods can be severely inefficient when applied to UQ problems, because their slow convergence rate often requires
a large number of samples to meet the desired accuracy.
To improve the efficiency, the multilevel Monte Carlo (MLMC) method was developed, where the
key idea is to reduce the computational cost by spreading the samples over a hierarchy of
discretizations.
The main idea was introduced by Heinrich in 2001~\cite{Heinrich2001} for path integration, then generalized by Giles in 2008~\cite{Giles2008}
for SDEs.
More recently, MLMC methods have been applied with great 
success to stochastic PDEs, see, e.g., \cite{Barth2011,Beck2018,Cliffe2011,Mishra2012,RobMLMC2017,Teckentrup2012} and 
\cite{Gilbert2020,Gilbert2021} specifically for eigenproblems.
A general overview of MLMC is presented by Giles in \cite{maingiles}.

In this paper, we present a MLMC method to approximate \eqref{eq:E[lambda]}, 
which, motivated by the use of MLMC for source problems described above, is based on a hierarchy of discretizations of the eigenvalue problem \eqref{convdiffeq} and which is much more efficient in practice than a Monte carlo approximation.
We consider two discretization methods, a standard Galerkin finite element method (FEM) and 
a streamline upwind Petrov--Galerkin (SUPG) method. The SUPG method improves the stability of the approximation
for cases with high convection and also allows us to start the MLMC method from a coarser discretization.
To further reduce the cost of our MLMC method, we again exploit the hierarchy of discretizations 
by using approximations on coarse levels as the starting values for the eigensolver on the fine level.
We also present the two extensions of MLMC that aim to improve different aspects of the method.
First, to improve the stability of the eigensolver for each sample we include a homotopy method for solving convection-diffusion eigenvalue problems in the MLMC algorithm.
The homotopy method computes the eigenvalue of the convection-diffusion operator by following a continuous path starting from the pure diffusion operator.
Second, to improve the overall complexity we present a  multilevel quasi-Monte Carlo method that aims to speed up the convergence of the variance on each level by replacing the Monte Carlo samples with a quasi-Monte Carlo (QMC) quadrature rule.

The structure of the paper is as follows.
Section \ref{sec:var} introduces 
the variational formulation of \eqref{convdiffeq}, along with necessary
background material on stochastic convection-diffusion eigenvalue problems.
Two discrete formulations of the eigenvalue problem are introduced: the Galerkin FEM and 
the SUPG method.
Section \ref{sec:mlmc} introduces the MLMC method and presents the corresponding complexity analysis.
In particular, this section details how to efficiently use 
each eigensolver, the Rayleigh quotient and implicitly restarted Arnoldi iterations, within the MLMC algorithm.
In Section~\ref{sec:ext_mlmc}, we present the two extensions
of our MLMC algorithm: a homotopy MLMC and a multilevel quasi-Monte Carlo method.
Section \ref{sec:nums} presents numerical results for finding the smallest eigenvalue of the convection-diffusion operator in a variety of settings. In particular, we present examples
for difficult cases with high convection.

To ease notation, for the remainder of the paper
we combine the random variables in 
the convection and diffusion coefficients into a single uniform random vector of dimension $s < \infty$,  denoted by $\bsomega = (\omega_i)_{i = 1}^s$
with $\omega_i \sim \mathcal{U}[0, 1]$.
In this case, $\pi$ is the product uniform measure on $\Omega \coloneqq [0, 1]^s$.

\section{Variational formulation}
\label{sec:var}

The eigenvalue problem~(\ref{convdiffeq}) needs to be discretized, because its solution is not analytically tractable for arbitrary geometries and parameters.
As such, we apply the standard finite element method to (\ref{convdiffeq}) to obtain an approximation of the desired eigenpair $(\lambda, u)$.

Before deriving the variational form of \eqref{convdiffeq}, 
we first establish certain assumptions about the problem domain, the random field $\kappa(\bsomega)$ and the velocity field $\mathbf{a}(\bsomega)$ for $\bsomega\in\Omega$,
which, in particular, ensure that the solution is in $H^2(D)$ \cite{Gris11} as well as incompressibility.

\begin{assumption}
  Assume that $D \subset \R^{d}$, for $d = 1, 2, $ or $3$, is a bounded, convex domain with Lipschitz continuous boundary $\Gamma$. 
	\label{ass:1}
\end{assumption}
\begin{assumption}
	The diffusion coefficient is bounded from above and from below for almost all $\bsomega \in \Omega$, i.e., there exist two constants $\kapmin , \kapmax$ 
	such that
	$0<\kappa_{\min}\leq\kappa(\mathbf{x},\bsomega) \leq\kappa_{\max}<\infty$.
 In addition, we assume that also
 $\|\kappa(\cdot, \bsomega)\|_{W^{1, \infty}} \leq \kapmax$ for almost all $\bsomega \in \Omega$.
	\label{ass:2}
\end{assumption}

\begin{assumption}
\label{ass:3}
The convection coefficient is divergence free, $\nabla \cdot \bsa(\bsx, \bsomega) = 0$ for all 
$\bsx \in D$, and uniformly bounded, $\|\bsa(\cdot, \bsomega)\|_{L^\infty} \leq \bsa_\mathrm{max}$,
for almost all $\bsomega$.
\end{assumption}

A simple example of a random convection term
is a homogeneous convection, 
$\bsa(\bsx, \bsomega) = [a_1\omega_1,\ldots, a_d \omega_d]^\top$ 
for $a_1, \ldots, a_d \in \R$, which are independent of $\bsx$.
Another example is the curl of random vector field, e.g., 
$\bsa(\bsx, \bsomega) = \nabla \times \bsZ(\bsx, \bsomega)$
where $\bsZ$ is a vector-valued random
field similar to that defined in \eqref{eq:log-kap}. 
Both of these examples satisfy Assumption~\ref{ass:3}.

Next we introduce the variational form of \eqref{convdiffeq}. Whenever it does not lead to confusion, we drop the spatial coordinate of (stochastic) functions for brevity---for example, $u(\mathbf{x},\bsomega)$ is also written as $u(\bsomega)$.
Let $V=H^1_0(\Omega)$ be the first-order Sobolev space of complex-valued functions with vanishing trace on the boundary with norm $\|v\|_V=\|\nabla v\|_{L^2}$.
Then let $V^*$ denote the dual space of $V$.
Multiplying \eqref{convdiffeq} by a test function $v\in V$
and then performing integration by parts, noting that we have no Neumann boundary condition term since $u(\bsx,\bsomega )=0$ on $\Gamma$, we obtain
\begin{align*}
\int_D\mathbf{a}(\mathbf{x},\bsomega)\cdot\nabla u(\mathbf{x},\bsomega)
v(\mathbf{x})\,\dd\mathbf{x}
+\int_D\kappa(\mathbf{x},\bsomega)\nabla u(\mathbf{x},\bsomega)\cdot\nabla v(\mathbf{x})\,\dd\mathbf{x} \quad  & \\
=\lambda(\bsomega)\int_D u(\mathbf{x},\bsomega)v(\mathbf{x})\,\dd\mathbf{x}.
\end{align*}
The variational eigenvalue problem corresponding to \eqref{convdiffeq} is
then:
Find a non-trivial eigenpair $(\lambda(\bsomega),u(\bsomega))\in\mathbb{C}\times V$ with 
$\| u(\bsomega)\|_{L^2} = 1$ such that
\begin{equation}
\label{eq:varevp}
	\mathcal{A}(\bsomega;  u(\bsomega),v)
	+\mathcal{B}(\bsomega; u(\bsomega),v)
	=\lambda(\bsomega) \langle u(\bsomega),v\rangle
	\mspace{15mu}\forall v\in V,
\end{equation}
where
\begin{align*}
	\mathcal{A}(\bsomega; u(\bsomega),v)
	&\coloneqq\int_D\kappa(\bsx,\bsomega)
	\nabla u(\bsx,\bsomega)\cdot\nabla \overline{v(\bsx)} \, \dd\mathbf{x},
\\
	\mathcal{B}(\bsomega; u(\bsomega),v)
	&\coloneqq\int_D\mathbf{a}(\bsx,\bsomega)
	\cdot\nabla u(\bsx,\bsomega) \overline{v(\bsx)}\,\dd\mathbf{x},
\end{align*}
and $\langle \cdot, \cdot \rangle$ denotes the $L^2(D)$ inner product
\[
	\langle u(\bsomega),v \rangle \coloneqq
	\int_D u(\bsx,\bsomega)\overline{v(\bsx)} \, \dd\mathbf{x}.
\]

Since the velocity $\mathbf{a}$ is divergence free, 
$\nabla \cdot \mathbf{a} = 0$,
the sesquilinear form in \eqref{eq:varevp} is uniformly coercive, i.e.,
\begin{equation}
\label{eq:coerc}
    \calA(\bsomega; v, v) + \calB(\bsomega; v, v) \geq \amin \|v\|_V^2, \quad \forall v \in V,
\end{equation}
with $\amin > 0$ independent of $\bsomega$. It is also uniformly bounded,
i.e.,
\begin{equation}
\label{eq:bounded}
    \calA(\bsomega; v, z) + \calB(\bsomega; v, z) \leq \amax \|v\|_V \|z\|_V, \quad  \forall v, z \in V,
\end{equation}
with $\amax < \infty$ independent of $\bsomega$.

For each $\bsomega \in \Omega$, the eigenvalue problem \eqref{eq:varevp} 
admits a countable sequence of eigenvalues $(\lambda_k(\bsomega))_{k = 1}^\infty \subset \C$,
which has no finite accumulation points,
and the smallest eigenvalue, $\lambda_1(\bsomega)$, is real and simple, see, e.g., \cite{BO91}. The eigenvalues are enumerated in order of increasing magnitude, 
counting multiplicity,
such that
\[
0 < \lambda_1(\bsomega) < |\lambda_2(\bsomega)| \leq |\lambda_3(\bsomega)|\leq \cdots
\]
with corresponding eigenfunctions $(u_k(\cdot,\bsomega))_{k = 1}^\infty$, enumerated accordingly.

In addition to the primal form \eqref{eq:varevp}, 
to facilitate our analysis later on we also
consider the dual eigenproblem:
Find a non-trivial dual eigenpair $(\lambda^*(\bsomega),u^*(\bsomega))\in\mathbb{C}\times V$ with $\|u^*(\bsomega)\|_{L^2}=1$ such that
\begin{equation}
\label{eq:dualevp}
	\mathcal{A}(\bsomega; v, u^*(\bsomega)) + \mathcal{B}(\bsomega; v, u^*(\bsomega)) =\overline{\lambda^*(\bsomega)} \langle v, u^*(\bsomega)\rangle
	\mspace{15mu}\forall v\in V.
\end{equation}
The primal and dual eigenvalues are related to each other via $\lambda(\bsomega)=\overline{\lambda^*(\bsomega)}$.

\begin{proposition}
For all $\bsomega \in \Omega$, the smallest eigenvalue $\lambda_1(\bsomega)$
of \eqref{eq:varevp} is simple and the gap is uniformly bounded, i.e.,
there exists $\rho > 0$, independent of $\bsomega$, such that
\begin{equation}
    |\lambda_2(\bsomega) -\lambda_1(\bsomega)| \geq \rho.
\end{equation}
\end{proposition}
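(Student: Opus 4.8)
The plan is to upgrade the pointwise simplicity of $\lambda_1(\bsomega)$ to a \emph{uniform} spectral gap by a compactness argument over $\Omega = [0,1]^s$. I would work through the compact solution operator $T_{\bsomega} \colon L^2(D) \to L^2(D)$ of the associated source problem, i.e.\ $T_{\bsomega} f = w$ where $\calA(\bsomega; w, v) + \calB(\bsomega; w, v) = \langle f, v\rangle$ for all $v \in V$; by the uniform bounds \eqref{eq:coerc}--\eqref{eq:bounded} and the Rellich embedding, $T_{\bsomega}$ is well defined, bounded uniformly in $\bsomega$, compact and injective, and its nonzero eigenvalues are exactly the reciprocals $\mu_k(\bsomega) = 1/\lambda_k(\bsomega)$, with $\mu_1(\bsomega) > 0$ simple and strictly largest in modulus (since $\lambda_1 < |\lambda_2| \le \cdots$). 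Writing $\sigma(\bsomega) := \{\lambda_k(\bsomega) : k \ge 1\}$ and $g(\bsomega) := |\lambda_2(\bsomega) - \lambda_1(\bsomega)|$, discreteness of the spectrum and the given simplicity of $\lambda_1(\bsomega)$ yield $g(\bsomega) = \dist\big(\lambda_1(\bsomega), \sigma(\bsomega) \setminus \{\lambda_1(\bsomega)\}\big) > 0$ for every $\bsomega$, so it suffices to show $g$ does not tend to $0$ along any sequence; then $\rho := \inf_\Omega g$ is positive and attained.

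The second ingredient is continuity of the data: from the explicit representation \eqref{eq:log-kap} and the analogous expression for $\bsa$, the maps $\bsomega \mapsto \kappa(\cdot, \bsomega)$ and $\bsomega \mapsto \bsa(\cdot, \bsomega)$ are continuous from $\Omega$ into $L^\infty(D)$ and $L^\infty(D)^d$, respectively (finite sums of fixed kernels composed with the continuous exponential). Hence, if $\bsomega^{(n)} \to \bsomega^*$, the forms $\calA(\bsomega^{(n)}; \cdot, \cdot) + \calB(\bsomega^{(n)}; \cdot, \cdot)$ converge to $\calA(\bsomega^*; \cdot, \cdot) + \calB(\bsomega^*; \cdot, \cdot)$ in the norm of bounded sesquilinear forms on $V \times V$, and a standard Strang-type estimate using the uniform coercivity \eqref{eq:coerc} gives $\|T_{\bsomega^{(n)}} - T_{\bsomega^*}\|_{\calL(L^2(D))} \to 0$.

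Finally I would combine these with analytic perturbation theory. Suppose $g(\bsomega^{(n)}) \to 0$; by compactness pass to a subsequence with $\bsomega^{(n)} \to \bsomega^* \in \Omega$. Since $\mu_1(\bsomega^*)$ is a simple isolated eigenvalue of $T_{\bsomega^*}$, enclose it in a small circle meeting no other spectral point; applying the Riesz-projection argument to the norm-convergent family $T_{\bsomega^{(n)}}$ shows that for all large $n$ exactly one eigenvalue of $T_{\bsomega^{(n)}}$ lies inside that circle, it is simple, and it converges to $\mu_1(\bsomega^*)$, so by modulus-dominance it equals $\mu_1(\bsomega^{(n)})$ and $\lambda_1(\bsomega^{(n)}) \to \lambda_1(\bsomega^*)$. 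By upper semicontinuity of the spectrum under norm convergence, together with the uniform bound on $\|T_{\bsomega^{(n)}}\|$, the remaining eigenvalues of $T_{\bsomega^{(n)}}$ eventually lie in an arbitrarily small neighbourhood of $\sigma(T_{\bsomega^*}) \setminus \{\mu_1(\bsomega^*)\} \subseteq \{|z| \le |\mu_2(\bsomega^*)|\}$; transferring back through the homeomorphism $z \mapsto 1/z$ and using $|\mu_1(\bsomega^*)| > |\mu_2(\bsomega^*)|$, every $\lambda_k(\bsomega^{(n)})$ with $k \ge 2$ stays bounded away from $\lambda_1(\bsomega^*)$. Hence $\liminf_{n\to\infty} g(\bsomega^{(n)}) \ge g(\bsomega^*) > 0$, contradicting $g(\bsomega^{(n)}) \to 0$.

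The routine parts are the continuity of the data in $\bsomega$ and the Strang estimate for $T_{\bsomega}$. The point that needs care is that $\lambda_2(\bsomega)$ need not be simple, isolated, or even real, and its label may switch with $\lambda_3(\bsomega)$, so simple-eigenvalue perturbation theory cannot be applied to it directly; the argument circumvents this by only ever perturbing the always-simple $\lambda_1(\bsomega)$ and invoking only upper semicontinuity of the rest of the spectrum, which is available despite non-self-adjointness precisely because everything is routed through the compact operators $T_{\bsomega}$.
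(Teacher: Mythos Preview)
Your argument is correct and follows the same high-level strategy as the paper: both exploit compactness of $\Omega = [0,1]^s$ together with continuity of the spectral data in $\bsomega$ to pass from pointwise positivity of the gap to uniform positivity. The paper's proof is very brief: it invokes Krein--Rutman for simplicity of $\lambda_1(\bsomega)$ at each $\bsomega$, then asserts in one line that ``since the eigenvalues are continuous in $\bsomega$, the gap is also continuous'' and takes $\rho = \min_{\bsomega \in \Omega} |\lambda_2(\bsomega)-\lambda_1(\bsomega)|$.

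Where your write-up genuinely adds value is precisely the point you flag at the end: the paper's one-line continuity claim tacitly treats $\lambda_2(\bsomega)$ as a continuous function, which is delicate because $\lambda_2$ need not be simple and its label can swap with $\lambda_3$ (or with a complex-conjugate partner) as $\bsomega$ varies. You sidestep this by never perturbing $\lambda_2$ directly: you route everything through the norm-continuous family of compact solution operators $T_{\bsomega}$, use a Riesz projection only around the always-simple $\mu_1(\bsomega)$, and invoke only upper semicontinuity of the remainder of the spectrum. This yields lower semicontinuity of the gap function $g$, which is exactly what the compactness argument needs. In effect, your proof supplies the missing justification for the paper's step~2; the trade-off is length versus rigor, with your version being the one that would survive a referee asking ``why is the gap continuous?''.
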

\begin{proof}
For each $\bsomega \in \Omega$, the Krein--Rutman Theorem implies that 
$\lambda_1(\bsomega)$ is simple. It remains to show that
the gap is uniformly bounded for $\bsomega \in \Omega$.
Since the eigenvalues are continuous in $\bsomega$,
it follows that the gap is also continuous.
Hence, there exists a strictly positive minimum on the compact domain $\Omega$
and we can take
\[
\rho \coloneqq \min_{\bsomega \in \Omega} 
|\lambda_2(\bsomega) - \lambda_1(\bsomega)| > 0.
\]
\end{proof}

\begin{theorem}\label{thm:u-reg}
Suppose Assumptions \ref{ass:1}--\ref{ass:3} hold.
For $\bsomega \in \Omega$, let $(\lambda(\bsomega), u(\cdot, \bsomega))$
be an eigenpair of the EVP \eqref{eq:varevp} and let 
$(\lambda^*(\bsomega), u^*(\cdot, \bsomega))$ be the corresponding
dual eigenpair of the adjoint EVP \eqref{eq:dualevp}, 
i.e., $\lambda(\bsomega) = \overline{\lambda^*(\bsomega)}$.
Then, the primal and the dual eigenfunctions satisfy
$u(\cdot, \bsomega),\ u^*(\cdot, \bsomega) \in V \cap H^2(D)$ with
\begin{equation}
\label{eq:u_H2}
    \|u(\bsomega)\|_{H^2} \,\leq\, 
    C_{\lambda, 2} |\lambda(\bsomega)|
    \quad \text{and} \quad
    \|u^*(\bsomega)\|_{H^2} \,\leq\, 
    C_{\lambda^*, 2} |\lambda^*(\bsomega)|,
\end{equation}
for $C_{\lambda, 2} < \infty$  and $C_{\lambda^*, 2} < \infty$
independent of $\bsomega$.
\end{theorem}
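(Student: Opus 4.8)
The plan is to treat the eigenvalue equation \eqref{eq:varevp} as a linear elliptic source problem, moving the first-order convection term and the reaction term onto the right-hand side, and then to apply $H^2$ elliptic regularity on the convex domain $D$. Membership of $u(\bsomega)$ and $u^*(\bsomega)$ in $V$ is already built into \eqref{eq:varevp} and \eqref{eq:dualevp}, so only the $H^2$-regularity and the quantitative bound remain to be shown; the factor $|\lambda(\bsomega)|$ will enter because the source equals $\lambda(\bsomega)u(\bsomega)$ plus a term controlled by $\|u(\bsomega)\|_V$, which is itself controlled by $|\lambda(\bsomega)|$.

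First I would derive an a priori $H^1$-bound. Testing \eqref{eq:varevp} with $v = u(\bsomega)$, using $\|u(\bsomega)\|_{L^2} = 1$, and taking real parts, the coercivity estimate \eqref{eq:coerc} gives $\amin\|u(\bsomega)\|_V^2 \le \Re\big(\calA(\bsomega;u(\bsomega),u(\bsomega)) + \calB(\bsomega;u(\bsomega),u(\bsomega))\big) = \Re\,\lambda(\bsomega) \le |\lambda(\bsomega)|$, hence $\|u(\bsomega)\|_V \le (|\lambda(\bsomega)|/\amin)^{1/2}$. Combining this with the Poincaré inequality $1 = \|u(\bsomega)\|_{L^2} \le C_P\|u(\bsomega)\|_V$ also yields a uniform lower bound $|\lambda(\bsomega)| \ge \amin/C_P^2 > 0$. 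The same test applied to the dual problem \eqref{eq:dualevp} with $v = u^*(\bsomega)$ gives $\|u^*(\bsomega)\|_V \le (|\lambda^*(\bsomega)|/\amin)^{1/2}$, using that $\Re\,\lambda^*(\bsomega) = \Re\,\lambda(\bsomega)$.

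Next I would rewrite the strong form of \eqref{eq:varevp} as $-\nabla\cdot\big(\kappa(\bsomega)\nabla u(\bsomega)\big) = g(\bsomega)$ in $D$ with $u(\bsomega) = 0$ on $\Gamma$, where $g(\bsomega) \coloneqq \lambda(\bsomega)u(\bsomega) - \bsa(\bsomega)\cdot\nabla u(\bsomega)$. Since $u(\bsomega) \in V$, Assumptions \ref{ass:2} and \ref{ass:3} give $g(\bsomega) \in L^2(D)$ with $\|g(\bsomega)\|_{L^2} \le |\lambda(\bsomega)| + \bsa_\mathrm{max}\|u(\bsomega)\|_V$. Because $D$ is bounded, convex and Lipschitz (Assumption \ref{ass:1}) and $\kappa(\cdot,\bsomega) \in W^{1,\infty}(D)$ with $\kapmin \le \kappa(\cdot,\bsomega) \le \kapmax$ (Assumption \ref{ass:2}), I would invoke the standard $H^2$-regularity estimate for the Dirichlet problem of $-\nabla\cdot(\kappa(\bsomega)\nabla\,\cdot)$ (see, e.g., \cite{Gris11}), obtaining $u(\bsomega) \in V\cap H^2(D)$ with $\|u(\bsomega)\|_{H^2} \le C_\mathrm{ell}\|g(\bsomega)\|_{L^2}$, where $C_\mathrm{ell}$ depends only on $D$, $\kapmin$, $\kapmax$ and the uniform $W^{1,\infty}$-bound on $\kappa$, and hence is independent of $\bsomega$. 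Inserting the bound on $\|g(\bsomega)\|_{L^2}$, then the $H^1$-bound of the previous step, and finally the lower bound $|\lambda(\bsomega)| \ge \amin/C_P^2$ to absorb the remaining $|\lambda(\bsomega)|^{1/2}$ into $|\lambda(\bsomega)|$, I get $\|u(\bsomega)\|_{H^2} \le C_{\lambda,2}|\lambda(\bsomega)|$ with $C_{\lambda,2} < \infty$ independent of $\bsomega$. For the dual eigenfunction the argument is identical, the strong form of \eqref{eq:dualevp} being $-\nabla\cdot\big(\kappa(\bsomega)\nabla u^*(\bsomega)\big) = \lambda^*(\bsomega)u^*(\bsomega) + \bsa(\bsomega)\cdot\nabla u^*(\bsomega)$; the sign of the first-order term flips under the adjoint but its $L^\infty$-norm is unchanged, so one obtains $\|u^*(\bsomega)\|_{H^2} \le C_{\lambda^*,2}|\lambda^*(\bsomega)|$.

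The main obstacle is the uniformity in $\bsomega$ of the elliptic regularity constant $C_\mathrm{ell}$: one must verify that the $H^2$-bound for the variable-coefficient operator $-\nabla\cdot(\kappa(\bsomega)\nabla\,\cdot)$ depends on $\kappa(\cdot,\bsomega)$ only through $\kapmin$, $\kapmax$ and $\|\kappa(\cdot,\bsomega)\|_{W^{1,\infty}}$ --- which is precisely why Assumption \ref{ass:2} controls the $W^{1,\infty}$-norm rather than only the $L^\infty$-norm of $\kappa$. Granting this, the estimate follows by a standard localization and freezing-of-coefficients reduction to the constant-coefficient Laplacian on a convex domain, where the $H^2$-bound is classical. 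The remaining points --- that $g(\bsomega) \in L^2(D)$, and the conversion of a bound of the shape $C\big(|\lambda(\bsomega)| + |\lambda(\bsomega)|^{1/2}\big)$ into one linear in $|\lambda(\bsomega)|$ using the lower bound from the first step --- are routine.
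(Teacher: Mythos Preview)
Your proposal is correct and follows essentially the same strategy as the paper's proof: derive the $H^1$-bound $\|u(\bsomega)\|_V \le (|\lambda(\bsomega)|/\amin)^{1/2}$ and the lower bound $|\lambda(\bsomega)| \ge \amin/C_P^2$ from coercivity, then invoke $H^2$-regularity on the convex domain with the convection and reaction terms moved to the right-hand side. The only real difference is in how the elliptic regularity step is organized. You keep the variable-coefficient operator $-\nabla\cdot(\kappa(\bsomega)\nabla\,\cdot)$ intact and appeal to a variable-coefficient $H^2$-estimate whose constant depends on $\kappa$ only through $\kapmin$, $\kapmax$ and $\|\kappa\|_{W^{1,\infty}}$; the paper instead expands $\nabla\cdot(\kappa\nabla u) = \kappa\Delta u + \nabla\kappa\cdot\nabla u$ globally, divides by $\kappa$, and applies Grisvard's result directly to the pure Laplacian, so that the regularity constant depends only on $D$ and the $\kappa$-dependence is made fully explicit in the right-hand side $f_{\bsomega}$. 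The paper's route is slightly more economical in that it avoids having to argue, even informally, about uniformity of the variable-coefficient regularity constant --- the very ``obstacle'' you flag disappears because everything is reduced to $-\Delta$. Your route is perfectly valid and is the natural one if one already has the variable-coefficient estimate at hand; the paper's is essentially your ``freezing-of-coefficients'' reduction carried out in one line globally rather than locally.
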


\begin{proof}
Rearranging \eqref{convdiffeq}, we can write the Laplacian of $u(\cdot, \bsomega)$
as
\begin{align*}
- \Delta u(\bsx, \bsomega) & = \frac{1}{\kappa(\bsx, \bsomega)} \big( \nabla \kappa(\bsx, \bsomega) \cdot \nabla u(\bsx, \bsomega)
- \bsa(\bsx, \bsomega) \cdot \nabla u(\bsx, \bsomega) + \lambda(\bsomega) u(\bsx, \bsomega)\big)\\ 
& \eqqcolon f_{\bsomega}(\bsx),
\end{align*}
which holds for almost all $\bsx \in D$.
Since $\kappa(\cdot, \bsomega) \in W^{1, \infty}(D)$, $\bsa(\cdot, \bsomega) \in L^\infty(D)^d$,
$u(\cdot, \bsomega) \in V$ and $1/\kappa(\bsx, \bsomega) \leq 1/\kapmin < \infty$ it follows that 
$f_{\bsomega} \in L^2(D)$ with
\begin{align*}
\|f_{\bsomega}\|_{L^2} \,&\leq\, \frac{1}{\kapmin}
\big( \|\kappa(\bsomega)\|_{W^{1, \infty}} \|u(\bsomega)\|_V
+ \|\bsa(\bsomega) \|_{L^\infty} \|u(\bsomega)\|_V + |\lambda(\bsomega)|\big)
\\
&\leq\, \frac{1}{\kapmin}\Big(
\big( \kapmax + \bsa_\mathrm{max}\big) \, \|u(\bsomega)\|_V + |\lambda(\bsomega)| \Big),
\end{align*}
where in the last step we have used that $\|u(\bsomega)\|_{L^2} = 1$, as well as Assumptions~\ref{ass:2} and \ref{ass:3}. Since $\lambda(\bsomega), u(\cdot, \bsomega)$ satisfy \eqref{eq:varevp} with 
$\|u(\bsomega)\|_{L^2} = 1$ and the sesquilinear form is coercive, it follows from \eqref{eq:coerc} that
\begin{align*}
& |\lambda(\bsomega)| = |\calA(\bsomega; u(\bsomega), u(\bsomega)) + \calB(\bsomega; u(\bsomega), u(\bsomega))|
\,\geq\, \amin \|u(\bsomega)\|_V^2  \,\geq\, \amin C_{\mathrm{Poin}}^2, 
\end{align*}
where in the last inequality we have used Poincar\'e's inequality, as well as 
$\|u(\bsomega)\|_{L^2} = 1$ again. 
The first inequality also implies $\|u(\bsomega)\|_V \leq \sqrt{|\lambda(\bsomega)|/\amin}$. Thus, substituting these two bounds, the $L^2$-norm of $f_{\bsomega}$ is bounded by
\begin{equation}
\label{eq:f_omega_L2}
\|f_{\bsomega}\|_{L^2} \,\leq\, \frac{1}{\kapmin}
\bigg( \frac{\kapmax + \bsa_\mathrm{max}}{\amin C_\mathrm{Poin}} + 1 \bigg) |\lambda(\bsomega)|,
\end{equation}
where the constant is independent of $\lambda$.

Finally, using classical results in Grisvard \cite{Gris11}
it follows that
\[
\|u(\bsomega) \|_{H^2} \,\leq\, C_D \|\Delta u(\bsomega)\|_{L^2}
\,=\, C_D \|f_{\bsomega} \|_{L^2},
\]
where $C_D$ depends only on the domain $D$. Finally, substituting
in the bound on $\|f_{\bsomega}\|_{L^2}$ \eqref{eq:f_omega_L2} gives the 
desired upper bound \eqref{eq:u_H2}.

The result for the dual eigenfunction follows analogously.
\end{proof}

\subsection{Finite element formulation}
\label{sec:fem}

Let $\{\mathcal{T}_h\}_{h>0}$ be a family of (quasi-)uniform, shape-regular, conforming meshes on the spatial domain $D$, where each $\mathcal{T}_h$ is parameterised by its mesh width $h>0$.
For $h>0$, we approximate the infinite-dimensional space $V$ by a 
finite-dimensional subspace $V_h$.
In this paper, we consider piecewise linear finite element (FE) spaces, but 
the method will work also for more general spaces.

The resulting discrete variational problem is to
find non-trivial primal and dual eigenpairs $(\lambda(\bsomega),u_h(\bsomega))\in\mathbb{C}\times V_h$ and  $(\lambda^*(\bsomega),u_h^*(\bsomega))\in\mathbb{C}\times V_h$ such that
\begin{equation}
\label{eq:evp-fe}
	\mathcal{A}(\bsomega; u_h(\bsomega),v_h)+\mathcal{B}(\bsomega; u_h(\bsomega),v_h)=\lambda_h(\bsomega) \langle u_h(\bsomega),v_h \rangle,
\quad \forall v_h \in V_h\,,
\end{equation}
and
\begin{equation}
	\mathcal{A}(\bsomega; v_h,u_h^*(\bsomega))+\mathcal{B}(\bsomega; v_h,u_h^*(\bsomega))= \overline{\lambda^*_h}(\bsomega) \langle v_h,u_h^*(\bsomega) \rangle, \quad \forall v_h \in V_h\,.
\end{equation}
For each $\bsomega$, it is well-known that for $h$ sufficiently small 
the FE eigenvalue problem \eqref{eq:evp-fe}
admits $M_h \coloneqq \dim(V_h)$ eigenpairs, 
denoted by
\begin{equation}
\label{eq:fe_evals}
\big(\lambda_{h, 1}(\bsomega),u_{h, 1}(\bsomega)\big), \, 
\big(\lambda_{h, 2}(\bsomega),u_{h, 2}(\bsomega)\big), \ldots, \big(\lambda_{h, M_h}(\bsomega),u_{h, M_h}(\bsomega) \big) \, \in \, \C \times V_h\,, 
\end{equation}
which approximate the first $M_h$ eigenpairs of \eqref{eq:varevp}. This approach is also called the Galerkin method.

\label{sec:fem_error}
In convection-dominated regions, the Galerkin method 
has well-known stability issues for standard (Lagrange-type) FEs, if the element size $h$ does not capture all necessary information about the flow.
The Peclet number (sometimes called the mesh Peclet number)~\cite{Zie2000b}
\begin{equation}\label{eq:Peclet}
	\mathrm{Pe}(\mathbf{x},\bsomega)=\frac{|\mathbf{a}(\mathbf{x},\bsomega)|h}{2\kappa(\mathbf{x},\bsomega)}
\end{equation}
governs how small the mesh size $h$ should be in order to have a stable solution using basic (Lagrange-type) FE methods.

The error in the FE approximations \eqref{eq:fe_evals} can be analysed using the 
Babu\v{s}ka--Osborn theory \cite{BO91}. We state the error
bounds for a simple eigenpair.

\begin{theorem}
\label{thm:fe}
Let $(\lambda(\bsomega), u(\bsomega))$ be an eigenpair of \eqref{eq:varevp} 
that is simple for all $\bsomega \in \Omega$, where $\Omega$ is a compact domain. 
Then there exist constants $C_\lambda, C_u$, independent of $h$ and $\bsomega$,
such that
\begin{equation}
\label{eq:lam-fe-err}
|\lambda(\bsomega) - \lambda_h(\bsomega)| \,\leq\, C_\lambda h^2
\end{equation}
and $u_h(\bsomega)$ can be normalized such that
\begin{equation}
\label{eq:u-fe-err}
\|u(\bsomega) - u_h(\bsomega)\|_V \,\leq\, C_u h.
\end{equation}
\end{theorem}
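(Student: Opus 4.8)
The plan is to recast \eqref{eq:varevp} and \eqref{eq:evp-fe} in operator form and then apply the Babu\v{s}ka--Osborn theory \cite{BO91}; since that theory is classical, the real work is to verify that every constant it produces can be taken independent of $h$ \emph{and} of $\bsomega$. For each $\bsomega \in \Omega$, define the solution operator $T(\bsomega) : L^2(D) \to V$ by letting $T(\bsomega)f$ be the unique $w \in V$ with $\mathcal{A}(\bsomega; w, v) + \mathcal{B}(\bsomega; w, v) = \langle f, v\rangle$ for all $v \in V$ (well posed by \eqref{eq:coerc}--\eqref{eq:bounded}), and let $T_h(\bsomega)$ be its Galerkin restriction to $V_h$. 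Composed with the compact embedding $V \hookrightarrow L^2(D)$, both are compact on $L^2(D)$, and $(\lambda, u)$ solves \eqref{eq:varevp} iff $T(\bsomega)u = \lambda^{-1}u$, with the analogous statement for $T_h(\bsomega)$ and \eqref{eq:evp-fe}; the adjoint $T(\bsomega)^*$ plays the corresponding role for \eqref{eq:dualevp}.

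First I would prove the single estimate that drives everything: there is a constant $C$, independent of $h$ and $\bsomega$, such that, for every $f \in L^2(D)$,
\[
\|(T(\bsomega) - T_h(\bsomega))f\|_V \,\leq\, C\, h\, \|f\|_{L^2},
\]
and likewise for $T(\bsomega)^* - T_h(\bsomega)^*$. This combines three ingredients, all with uniform constants: (i) C\'ea's lemma, $\|(T(\bsomega) - T_h(\bsomega))f\|_V \leq (\amax/\amin)\inf_{v_h \in V_h}\|T(\bsomega)f - v_h\|_V$, with $\amax/\amin$ uniform by \eqref{eq:coerc}--\eqref{eq:bounded}; (ii) $H^2$-regularity of the source problem, i.e.\ the analogue of Theorem \ref{thm:u-reg} obtained by the same Grisvard argument applied with right-hand side $f \in L^2(D)$ in place of $\lambda u$, giving $\|T(\bsomega)f\|_{H^2} \leq C_D'\|f\|_{L^2}$ with $C_D'$ depending only on $D$ and the structural bounds in Assumptions \ref{ass:2}--\ref{ass:3}; and (iii) the standard piecewise-linear interpolation estimate $\inf_{v_h \in V_h}\|w - v_h\|_V \leq C_I h \|w\|_{H^2}$, with $C_I$ depending only on $D$ and the shape-regularity of $\{\mathcal{T}_h\}$.

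Next I would feed this into the Babu\v{s}ka--Osborn estimates for the simple eigenvalue $\mu(\bsomega) = \lambda(\bsomega)^{-1}$ of $T(\bsomega)$, applied with the one-dimensional eigenspace $\mathrm{span}\{u(\bsomega)\}$. Because $T(\bsomega)u(\bsomega) = \lambda(\bsomega)^{-1}u(\bsomega)$, the quantity $\|(T(\bsomega)-T_h(\bsomega))\,u(\bsomega)\|_V$ that controls the eigenfunction error is bounded, via the display above and Theorem \ref{thm:u-reg} (which absorbs the factor $|\lambda(\bsomega)|$), by $C h$; normalizing $u_h(\bsomega)$ to $u(\bsomega)$ in the usual way (say $\|u_h(\bsomega)\|_{L^2} = 1$ with $\langle u(\bsomega), u_h(\bsomega)\rangle > 0$) then gives \eqref{eq:u-fe-err}, and the same argument applied to \eqref{eq:dualevp} gives $\|u^*(\bsomega) - u_h^*(\bsomega)\|_V \leq Ch$. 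For the eigenvalue, the Babu\v{s}ka--Osborn identity for a simple eigenvalue of a non-self-adjoint operator expresses $\lambda(\bsomega) - \lambda_h(\bsomega)$, up to higher-order terms, as $\big(\mathcal{A}(\bsomega; u - u_h, u^* - u_h^*) + \mathcal{B}(\bsomega; u - u_h, u^* - u_h^*) - \lambda \langle u - u_h, u^* - u_h^*\rangle\big)/\langle u_h, u_h^*\rangle$; bounding the numerator with \eqref{eq:bounded} and the two $O(h)$ eigenfunction errors yields \eqref{eq:lam-fe-err}, provided $|\langle u_h(\bsomega), u_h^*(\bsomega)\rangle|$ stays bounded away from zero.

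The main obstacle is precisely this uniformity in $\bsomega$, which is needed in three places: the mesh threshold $h_0$ below which the Babu\v{s}ka--Osborn perturbation argument applies, the separation of $\mu(\bsomega)$ from the remaining spectrum of $T(\bsomega)$, and the lower bound for $|\langle u_h(\bsomega), u_h^*(\bsomega)\rangle|$. For $\lambda_1(\bsomega)$ the spectral separation is immediate from the preceding Proposition together with the uniform bounds $\amin C_{\mathrm{Poin}}^2 \leq \lambda_1(\bsomega) \leq \mathrm{const}$ (the lower bound as in the proof of Theorem \ref{thm:u-reg}, the upper bound by continuity on the compact set $\Omega$), and for any other simple eigenvalue it follows similarly from continuity of the eigenvalues, their lack of finite accumulation, and compactness of $\Omega$; since $\|(T(\bsomega)-T_h(\bsomega))|_{\mathrm{span}\{u(\bsomega)\}}\| \leq Ch$ uniformly, a uniform $h_0$ making this smaller than a fixed fraction of the separation then exists. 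For the inner product, one first notes $\langle u(\bsomega), u^*(\bsomega)\rangle \neq 0$ (true for a simple eigenvalue), then that $\bsomega \mapsto \langle u(\bsomega), u^*(\bsomega)\rangle$ is continuous — which rests on the continuous dependence of $\kappa(\cdot, \bsomega)$ and $\bsa(\cdot, \bsomega)$ on $\bsomega$ (immediate from \eqref{eq:log-kap} and the convection examples) together with the same perturbation-theoretic argument used for the eigenvalue continuity in the proof of the Proposition — so $|\langle u(\bsomega), u^*(\bsomega)\rangle|$ attains a positive minimum on $\Omega$; the already-established uniform $V$-convergence $u_h \to u$ and $u_h^* \to u^*$ then transfers this to a uniform lower bound on $|\langle u_h(\bsomega), u_h^*(\bsomega)\rangle|$ for $h \leq h_0$. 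Collecting these constants with $\amax/\amin$, $C_D'$, $C_I$, $C_{\lambda, 2}$ and $C_{\lambda^*, 2}$ produces $C_\lambda$ and $C_u$ depending only on $D$, the mesh family $\{\mathcal{T}_h\}$, and the bounds in Assumptions \ref{ass:1}--\ref{ass:3}, as claimed.
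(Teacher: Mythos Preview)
Your proposal is correct and rests on the same foundation as the paper---Babu\v{s}ka--Osborn theory together with compactness of $\Omega$ to obtain $\bsomega$-uniform constants---but the mechanics differ. The paper follows \cite{BO91} literally: it quotes Theorems~7.2, 8.1 and 8.2 there, extracts the explicit constant $C_u(\bsomega)$ in terms of $\|T_{\bsomega}\|$, $\|u(\bsomega)\|_V$, the length of a contour $\Gamma(\bsomega)$ encircling $\mu(\bsomega)=1/\lambda(\bsomega)$, and the resolvent norms $\|R_z(T_{\bsomega})\|$ and $\|R_z(T_{\bsomega,h})\|$, and then bounds each resolvent uniformly by parametrising $z=z(\bsomega,\theta)$ and invoking continuity of $(\bsomega,\theta)\mapsto \|R_{z(\bsomega,\theta)}(T_{\bsomega})\|$ on the compact set $\Omega\times[0,2\pi]$. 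You instead establish a uniform operator-norm bound $\|(T(\bsomega)-T_h(\bsomega))f\|_V\le Ch\|f\|_{L^2}$ directly from C\'ea, $H^2$-regularity of the source problem, and interpolation, and then control the eigenvalue error through the non-self-adjoint error identity, reducing the uniformity question to a lower bound on $|\langle u_h(\bsomega),u_h^*(\bsomega)\rangle|$.

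Both routes need the same compactness ingredients (uniform spectral gap, continuity of eigenpairs in $\bsomega$), so neither is strictly shorter. Your version is more self-contained and avoids resolvent calculus, which is attractive; the paper's version has the advantage of quoting \cite{BO91} verbatim, so the reader need not re-derive the error identity or the perturbation argument for the eigenfunction. One small point: your uniform $H^2$-bound for the source problem does follow from the argument of Theorem~\ref{thm:u-reg}, but note that the bound there picks up a factor $\|T(\bsomega)f\|_V$ on the right, which you must first control by Lax--Milgram before closing the estimate---you allude to this but it is worth making explicit.
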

\begin{proof}
See Babu\v{s}ka and Osborn \cite{BO91} and the appendix, where we show explicitly that the constants
are bounded uniformly in $\bsomega$.
\end{proof}

\subsection{Streamline-upwind Petrov--Galerkin formulation}
\label{sec:supg}
A sufficiently small Peclet number \eqref{eq:Peclet} guarantees numerical stability of the standard Galerkin method.
One can either choose a small overall mesh size $h$ or locally adapt the mesh size to satisfy the stability condition. However, globally reducing the mesh size may lead to a high computational cost, while local adaptations may need to be performed path-wise for each realisation of $\bsomega$, which in turn leads to complications in the algorithmic design. In this section, we consider using the streamline-upwind Petrov--Galerkin (SUPG) method to improve numerical stability.

The SUPG method was introduced by Brooks and Hughes~\cite{1982SUPG} to stabilize the finite element solution.
Since then, the method has been extensively investigated  and used in various applications~\cite{Bochev2004,Cohen2012,Hauke2002,Hughes1984,Hughes1986,Knobloch2008}.
The SUPG method can be derived in several ways.
Here, we introduce its formulation by adding a stabilization term to the bilinear form.
An equivalent weak formulation can be obtained by defining a test space with additional test functions in the form $\hat{v}(\mathbf{x})=v(\mathbf{x})+p(\mathbf{x})$,
where $v(\mathbf{x})$ is a standard test function in the finite element method and $p(\mathbf{x})$ is an additional discontinuous function.

We define the residual operator $\mathcal{R}$ as
\begin{equation}
\mathcal{R}(\bsomega, \sigma) v = \mathbf{a}(\bsomega)\cdot\nabla v -\nabla\cdot\kappa(\bsomega)\nabla v -\sigma v,\label{eq:resid}
\end{equation}
which gives the residual of the convection-diffusion equation \eqref{convdiffeq} for a pair $(\sigma, v) \in \mathbb{C}\times V$. Then, stabilization techniques can be derived from the general formulation
\begin{equation}
\begin{aligned}
& \mathcal{A}(\bsomega; u(\bsomega), v)+\mathcal{B}(\bsomega; u(\bsomega),v) \\
& \qquad +\sum_{m=1}^{|\mathcal{T}_h|}\int_{D_m}\tau_m(\mathbf{x},\bsomega)\left(\mathcal{R}(\bsomega, \lambda(\bsomega)) u(\bsx, \bsomega) \right)
\left(\mathcal{P}(\bsomega) v(\bsx) \right)
\dd\mathbf{x} \\ 
& \qquad =\lambda(\bsomega)\langle u(\bsomega),v\rangle,
\end{aligned}
\label{eq:stab}
\end{equation}
where $|\mathcal{T}_h|$ is the number of elements of the mesh 
$\mathcal{T}_h$, $\mathcal{P}(\bsomega)$ is some stabilization operator and
$\tau_m(\bsomega)$ is the stabilization parameter acting in the $m$th finite 
element. The stabilization strategy will be determined by $\mathcal{P}(\bsomega)$ and $\tau_m(\bsomega)$.

Various definitions exist for the operator $\mathcal{P}(v,\bsomega)$, such as the Galerkin Least Square method~\cite{Hughes1989}, the SUPG method~\cite{Broesen2014,1982SUPG,donea2003}, the Unusual Stabilized Finite Element method~\cite{Usfem2002}, etc. For the SUPG method, the stablization operator $\mathcal{P}(\bsomega)$ is defined as
\begin{equation}
\mathcal{P}(\bsomega) v =\mathbf{a}(\bsomega)\cdot\nabla v.
\label{eq:supg}
\end{equation}
Substituting Equations (\ref{eq:resid}) and (\ref{eq:supg}) into (\ref{eq:stab}) gives the SUPG weighted residual formulation
\begin{align*}
& \mathcal{A}(\bsomega; u(\bsomega), v)+\mathcal{B}(\bsomega; u(\bsomega),v) + \sum_{m=1}^{|\mathcal{T}_h|}\int_{D_m}\Big( \tau_m(\mathbf{x},\bsomega) \big(\mathbf{a}(\mathbf{x},\bsomega)\cdot\nabla u(\mathbf{x},\bsomega) \\
& \qquad - \nabla\cdot\kappa(\mathbf{x},\bsomega)\nabla u(\mathbf{x},\bsomega)-\lambda(\bsomega) u(\mathbf{x},\bsomega)\big)(\mathbf{a}(\mathbf{x},\bsomega)\cdot\nabla v(\mathbf{x})) \Big)\dd \mathbf{x}\\
& \qquad =\lambda(\bsomega)\langle u(\bsomega),v\rangle,
\end{align*}
which is equivalent to 
\begin{equation}
\label{eq:weak_supg}
\begin{aligned}
& \mathcal{A}(\bsomega; u(\bsomega), v)+\mathcal{B}(\bsomega; u(\bsomega),v) + \sum_{m=1}^{|\mathcal{T}_h|}\int_{D_m}\Big( \tau_m(\mathbf{x},\bsomega) \big(\mathbf{a}(\mathbf{x},\bsomega)\cdot\nabla u(\mathbf{x},\bsomega) \\
& \qquad - \nabla\cdot\kappa(\mathbf{x},\bsomega)\nabla u(\mathbf{x},\bsomega)\big)(\mathbf{a}(\mathbf{x},\bsomega)\cdot\nabla v(\mathbf{x})) \Big)\dd \mathbf{x} \\
&\qquad =\lambda(\bsomega)\bigg(\langle u(\bsomega),v\rangle +\sum_{m=1}^{|\mathcal{T}_h|}\int_{D_m}\tau_m(\mathbf{x},\bsomega) u(\mathbf{x},\bsomega) \mathbf{a}(\mathbf{x},\bsomega)\cdot\nabla v(\mathbf{x})\dd \mathbf{x} \bigg).
\end{aligned}
\end{equation}

After approximating the weak form~\eqref{eq:weak_supg} by the usual finite-dimensional subspaces, we obtain the discrete variational problem: Find non-trivial (primal)
eigenpairs $(\lambda_h(\bsomega),u_h(\bsomega))\in\mathbb{C}\times V_h$ such that
\begin{align}
& \mathcal{A}(\bsomega; u_h(\bsomega),v_h) +\mathcal{B}(\bsomega; u_h(\bsomega),v_h) + \sum_{m=1}^{|\mathcal{T}_h|}\int_{D_m} \Big( \tau_m(\mathbf{x},\bsomega) \big(\mathbf{a}(\mathbf{x},\bsomega)\cdot\nabla u_h(\mathbf{x},\bsomega) \nonumber\\
& \quad - \nabla\cdot\kappa(\mathbf{x},\bsomega)\nabla u_h(\mathbf{x},\bsomega)\big)(\mathbf{a}(\mathbf{x},\bsomega)\cdot\nabla v_h(\mathbf{x})) \Big) \dd \mathbf{x} \label{eq:evp-supg}\\
&\quad=\lambda_h(\bsomega)\bigg(\mathcal{M}(u_h(\bsomega),v_h) +\sum_{m=1}^{|\mathcal{T}_h|} \int_{D_m}\tau_m(\mathbf{x},\bsomega) u_h(\mathbf{x},\bsomega) \mathbf{a}(\mathbf{x},\bsomega)\cdot\nabla v_h(\mathbf{x})\dd \mathbf{x} \bigg), \nonumber
\end{align}
and dual eigenpairs $(\lambda_h^*(\bsomega),u_h^*(\bsomega))\in\mathbb{C}\times V_h$ such that
\begin{align}
& \mathcal{A}(\bsomega; v_h, u_h^*(\bsomega))+\mathcal{B}(\bsomega; v_h, u_h^*(\bsomega))+\sum_{m=1}^{|\mathcal{T}_h|}\int_{D_m}\Big(\tau_m(\mathbf{x},\bsomega) \big(\mathbf{a}(\mathbf{x},\bsomega)\cdot\nabla v_h(\mathbf{x}) \nonumber \\
& \quad -\nabla\cdot\kappa(\mathbf{x},\bsomega)\nabla v_h(\mathbf{x})\big)(\mathbf{a}(\mathbf{x},\bsomega)\cdot\nabla u_h^*(\mathbf{x},\bsomega))\Big)\dd \mathbf{x} \label{eq:evp-supg-dual} \\
&\quad = \overline{\lambda_h^*}(\bsomega)\Big(\mathcal{M}(v_h, u_h^*(\bsomega)) +\sum_{m=1}^{|\mathcal{T}_h|}\int_{D_m}\tau_m(\mathbf{x},\bsomega) v_h(\mathbf{x}) \mathbf{a}(\mathbf{x},\bsomega)\cdot\nabla u_h^*(\mathbf{x},\bsomega)\dd \mathbf{x} \Big).\nonumber
\end{align}
It follows that the right-hand side matrix is no longer symmetric and is stochastic compared to the mass matrix in the standard Galerkin method.

In general, finding the optimal stabilization parameter $\tau_m(\mathbf{x},\bsomega)$ is an open problem, and thus it is defined heuristically~\cite{Knobloch2008}.
We employ the following stabilization parameter~\cite{Bochev2004,Hauke2002}
\begin{equation}
\tau_m(\mathbf{x},\bsomega) = \frac{h_m}{2|\mathbf{a}(\mathbf{x},\bsomega)|}(\coth \mathrm{Pe}(\mathbf{x},\bsomega)-\frac{1}{\mathrm{Pe}(\mathbf{x},\bsomega)}).
\end{equation}
However, in practical implementations the following asymptotic expressions of $\tau_m(\mathbf{x}, \bsomega)$ are used
\begin{equation}
\hat{\tau}_m(\bsomega)=\left\{
  \begin{array}{ll}
  \displaystyle\max_{\mathbf{x}\in D_m} \frac{h_m}{2|\mathbf{a}(\mathbf{x},\bsomega)|},&
  \text{ if } \displaystyle \max_{\mathbf{x}\in D_m}\mathrm{Pe}(\mathbf{x},\bsomega)\geq 1, \\[6mm]  
  \displaystyle\max_{\mathbf{x}\in D_m} \frac{h_m^2}{12\kappa(\mathbf{x},\bsomega)},&
  \text{ if } \displaystyle\max_{\mathbf{x}\in D_m}\mathrm{Pe}(\mathbf{x},\bsomega)<1.
  \end{array}
\right.
	\label{eq:st_par}
\end{equation}

Figure~\ref{fig:spectr_supg} shows the 20 smallest eigenvalues for a single realization of random field $\kappa(\mathbf{x}, \bsomega)$ with velocity $\mathbf{a}(\mathbf{x},\bsomega)=[50,0]^T$ on meshes with size $h=2^{-3}, 2^{-4}, 2^{-5}$.
The standard Galerkin method has non-physical oscillations in the discretized eigenfunction for such a coarse mesh and its two smallest eigenvalues form a complex conjugate pair; this contradicts the fact that the smallest eigenvalue should be real and simple. The SUPG method, on the other hand, has a real smallest eigenvalue, indicating a stable solution.
\begin{figure}[h!]
		\centering
		\begin{subfigure}{0.45\textwidth}
		\centering
		\begin{tikzpicture}[scale=0.65]
\begin{axis}[
   axis lines = left,
   grid = both,
   xlabel = $\Re(\lambda_h)$,
	ylabel = {$\Im(\lambda_h)$},
   minor tick num=0,
   xmin = 0,
   xmax = 3000,
   ymin = -250,
   ymax = 250,
   ylabel near ticks,
   every axis y label/.style=
{at={(-0.2,0.5)},rotate=90},
   y tick label style={
       /pgf/number format/.cd,
           fixed,
       /tikz/.cd,
   },
   legend style={at={(0.85,0.9)}, anchor=north,legend columns=1},
]
	\addlegendentry{$h=2^{-3}$}
\addplot plot[only marks,mark=*,red,scatter src=explicit symbolic,domain=1:1e-30]
        file {results/spectr/spectr_supg_h3.txt};
        \addlegendentry{$h=2^{-4}$}
\addplot plot[only marks,mark=*,blue,scatter src=explicit symbolic,domain=1:1e-30]
        file {results/spectr/spectr_supg_h4.txt};
        \addlegendentry{$h=2^{-5}$}
\addplot plot[only marks,mark=x,red,scatter src=explicit symbolic,domain=1:1e-30]
        file {results/spectr/spectr_fem_h5.txt};

\end{axis}
\end{tikzpicture}
\caption{SUPG eigenvalues.}
\label{fig:spectr_supg2}
\end{subfigure}
\hfill
\begin{subfigure}{0.5\textwidth}
\centering
\begin{tikzpicture}[scale=0.65]
\begin{axis}[
   axis lines = left,
   grid = both,
   xlabel = $\Re(\lambda_h)$,
	ylabel = {$\Im(\lambda_h)$},
   minor tick num=0,
   xmin = 0,
   xmax = 3000,
   ymin = -250,
   ymax = 250,
   ylabel near ticks,
   every axis y label/.style=
{at={(-0.2,0.5)},rotate=90},
   y tick label style={
       /pgf/number format/.cd,
           fixed,
       /tikz/.cd,
   },
   legend style={at={(0.85,0.9)}, anchor=north,legend columns=1},
]
\addlegendentry{$h=2^{-3}$}
\addplot plot[only marks,mark=*,red,scatter src=explicit symbolic,domain=1:1e-30]
        file {results/spectr/spectr_fem_h3.txt};
        \addlegendentry{$h=2^{-4}$}
\addplot plot[only marks,mark=*,blue,scatter src=explicit symbolic,domain=1:1e-30]
        file {results/spectr/spectr_fem_h4.txt};
        \addlegendentry{$h=2^{-5}$}
\addplot plot[only marks,mark=x,red,scatter src=explicit symbolic,domain=1:1e-30]
        file {results/spectr/spectr_fem_h5.txt};

\end{axis}
\end{tikzpicture}
\caption{Finite element eigenvalues.}
\label{fig:spectr_supg6}
\end{subfigure}
\caption{The first 20 computed eigenvalues of the
SUPG (\emph{left}) and FEM (\emph{right}) discretizations of the convection-diffusion problem for $\kappa(\mathbf{x})=1$ and $\mathbf{a}=[50,0]^T$ using mesh sizes $h=2^{-3}, 2^{-4}, 2^{-5}$. }
\label{fig:spectr_supg}
\end{figure}
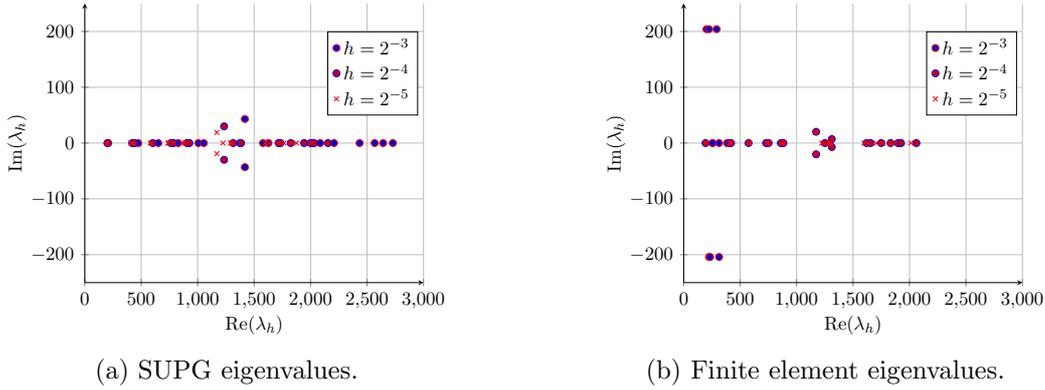

\section{Multilevel Monte Carlo methods}
\label{sec:mlmc}

To compute $\mathbb{E}[\lambda]$, we first approximate the eigenproblem \eqref{eq:varevp} for each $\bsomega \in \Omega$ 
and then use a sampling method to estimate the expected value of the approximate eigenvalue. There are two layers of approximation: First the  eigenvalue problem is discretized by a numerical method, e.g., FEM or SUPG as in Section~\ref{sec:fem}, then the resulting discrete 
eigenproblem is solved by an iterative eigenvalue solver, e.g., the Rayleigh quotient method, such that $\lambda(\bsomega) \approx \lambda_h(\bsomega) \approx \lambda_{h,K}(\bsomega)$, where $h$ denotes the meshwidth of the spatial discretization and $K$ denotes the number of iterations used by the eigenvalue solver. 

Applying the Monte Carlo method to $\lambda_{h, K}$, the expected eigenvalue can be approximated by the estimator
\begin{equation}\label{eq:mc}
\mathbb{E}[\lambda(\bsomega)] \approx Y_{h,K,N} \coloneqq \frac1N \sum_{n=1}^N \lambda_{h,K}(\bsomega_n),
\end{equation}
where the samples $\{\bsomega_n\}_{n = 1}^N \subset \Omega$ are i.i.d. uniformly on $\Omega$. This introduces a 
third factor that influences the accuracy of the estimator in \eqref{eq:mc} in addition to $h$ and $K$, namely the number of samples $N$. Note that we assume that the number of iterations $K$ is uniformly bounded in $\bsomega$.

The standard Monte Carlo estimator in \eqref{eq:mc} is computationally expensive. To measure its accuracy we use the mean squared error (MSE) 
\[
\mathrm{MSE}(\mathbb{E}[\lambda(\bsomega)], Y_{h,K,N}) = \mathbb{E}\left[\left|\mathbb{E}[\lambda(\bsomega)] - Y_{h,K,N}\right|^2\right],
\]
where the outer expectation is with respect to the samples in the estimator $Y_{h,K,N}$. Under mild conditions, the MSE can be decomposed as 
\[
\mathrm{MSE}(\mathbb{E}[\lambda], Y_{h,K,N}) = \left|\mathbb{E}[\lambda(\bsomega)] - \mathbb{E}[\lambda_{h,K}(\bsomega)]\right|^2 + \frac1N \mathrm{var}(\lambda_{h,K}(\bsomega)).
\]
In this decomposition, the bias $\left|\mathbb{E}[\lambda(\bsomega)] - \mathbb{E}[\lambda_{h,K}(\bsomega)]\right|$ is controlled by $h$ and $K$, whereas the variance term decreases linearly with $1/N$. To guarantee that the MSE remains below a threshold $\varepsilon^2$, $h$ and $K$ need to be chosen such that the bias is $O(\varepsilon^2)$, while the sample size needs to satisfy $N = O(\varepsilon^{-2})$. Suppose $K = K(h)$ is sufficiently large so that the bias is solely controlled by $h$ and satisfies $\left|\mathbb{E}[\lambda(\bsomega)] - \mathbb{E}[\lambda_{h,K}(\bsomega)]\right| = O(h^\alpha)$ for some $\alpha >0$. Suppose further that the computational cost to compute $\lambda_{h,K}(\bsomega)$ for each $\bsomega$ is $O(h^{-\gamma})$ for some $\gamma>0$. Then the total computational complexity to achieve an MSE of $\varepsilon^2$ is $O(\varepsilon^{-2-\gamma/\alpha})$. Note that in the best-case scenario, we have $\gamma = d$, i.e., when the computational cost of an eigensolver iteration is linear in the degrees of freedom of the discretization and the number of iterations can be bounded independently of $h$. Due to the quadratic convergence of algebraic eigensolvers, $K$ is usually controlled very easily. 

The multilevel Monte Carlo (MLMC) method offers a natural way to reduce the complexity of the standard Monte Carlo method by spreading the samples over a hierarchy of discretizations.
In our setting, we define a sequence of meshes corresponding to mesh sizes $h_0>h_1>\cdots>h_L > 0$.
This in turn defines a sequence of discretized eigenvalues $\lambda_{h_0, K_0}(\bsomega), \lambda_{h_1, K_1}(\bsomega),\dots,\lambda_{h_{L}, K_L}(\bsomega)$ that
approximate $\lambda(\bsomega)$ with increasing accuracy and increasing computational cost. The MLMC method approximates $\mathbb{E}[\lambda(\bsomega)]$ using the telescoping sum
\begin{equation}
\label{eq:telesum}
	\mathbb{E}[\lambda(\bsomega)] \approx \mathbb{E}[\lambda_L(\bsomega)] = \mathbb{E}[\lambda_0(\bsomega)] + \sum_{\ell = 1}^L\mathbb{E}[\lambda_\ell(\bsomega) - \lambda_{\ell - 1}(\bsomega)],
\end{equation}
where $\lambda_\ell(\bsomega):=\lambda_{h_\ell,K_\ell}(\bsomega)$ is the shorthand notation for the discretized eigenvalues. 
Each expected value of differences in \eqref{eq:telesum} can be estimated by an independent Monte Carlo approximation,
leading to the multilevel estimator 
\begin{equation}\label{eq:mlmc_eig}
Y=\sum_{\ell=0}^L Y_\ell,\mspace{15mu}Y_\ell = \frac{1}{N_\ell}\sum_{n=1}^{N_\ell}(\lambda_\ell(\bsomega_{\ell, n})-\lambda_{\ell-1}(\bsomega_{\ell, n})).
\end{equation}

Suppose independent samples are used to compute each $Y_\ell$, then
\begin{equation}
\mathbb{E}[Y]=\mathbb{E}[\lambda_L(\bsomega)],\mspace{10mu}\mathrm{var}[Y]=\sum_{\ell=0}^L \frac1{N_\ell} \mathrm{var}[\lambda_\ell(\bsomega)-\lambda_{\ell-1}(\bsomega)],
\end{equation}
and the MSE of \eqref{eq:mlmc_eig} can also be split into 
a bias and a variance term, i.e.,
\[
\mathrm{MSE}(\mathbb{E}[\lambda(\bsomega)], Y) = \left|\mathbb{E}[\lambda(\bsomega)] - \mathbb{E}[\lambda_{L}(\bsomega)]\right|^2 + \mathrm{var}(Y).
\]
Thus, to ensure again a MSE of $O(\varepsilon^2)$, it is sufficient to ensure that the
bias, $\left|\mathbb{E}[\lambda(\bsomega)] - \mathbb{E}[\lambda_{L}(\bsomega)]\right|^2 $, and the variance, $\mathrm{var}[Y]$, are both less than $\frac{1}{2}\varepsilon^2$.
The following theorem from \cite{Cliffe2011} (see also \cite{maingiles}) provides bounds on the computational cost of a general 
MLMC estimator and applies in particular to \eqref{eq:mlmc_eig}.

\begin{theorem}
Let $Q$ denote a random variable and $Q_{\ell}$ its numerical approximation on level $\ell$, and suppose $C_\ell$ is the computational cost of evaluating one realization of the difference $Q_\ell-Q_{\ell-1}$. Consider the multilevel estimator
\begin{equation}
Y=\sum_{\ell=0}^LY_\ell,\quad Y_\ell = \frac1{N_\ell}\sum_{n=1}^{N_\ell} Q_{\ell,n}-Q_{\ell-1,n},
\end{equation}
where $Q_{\ell,n}$ is a sample of $Q_{\ell}$ and $Q_{-1,n} = 0$, for all $n$. 

If there exist positive constants $\alpha, \beta, \gamma$ such that $\alpha\geq\frac{1}{2}\min(\beta,\gamma)$ and
\begin{itemize}[leftmargin=2em]
\item[I\hphantom{II}] \ $|\mathbb{E}[Q_\ell-Q]| = O( h_{\ell}^{\alpha})$ \; (convergence of bias),
\item[II\hphantom{I}] \ $\mathrm{var}[Y_\ell] = O(h_{\ell}^{\beta})$ \qquad\; (convergence of variance),
\item[III]  \ $C_\ell = O(h_{\ell}^{-\gamma})$ \qquad\quad\;\; (cost per sample),
\end{itemize}
then for any $0 < \varepsilon < e^{-1}$ there exist a constant $c$, a stopping level $L$, and sample sizes $\{N_\ell\}_{\ell = 0}^L$ such that the MSE of $Y$ satisfies $\mathrm{MSE}(\mathbb{E}[Q], Y) \leq \varepsilon^2$ with a total computational complexity, denoted by $C(\varepsilon)$, satisfying 
\begin{equation}
C(\varepsilon)\leq\left\{
  \begin{array}{ll}
    c\varepsilon^{-2}, & \hbox{$\beta > \gamma$;} \\
    c\varepsilon^{-2}(\log\varepsilon)^2, & \hbox{$\beta = \gamma$;} \\
    c\varepsilon^{-2-(\gamma-\beta)/\alpha}, & \hbox{$\beta < \gamma$,}
  \end{array}
\right.
	\label{eq:complexity}
\end{equation}
where the constant $c$ is independent of $\alpha$, $\beta$ and $\gamma$.
\label{thm:mlmc}
\end{theorem}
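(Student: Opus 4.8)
The plan is to follow the classical argument of Cliffe et al.\ \cite{Cliffe2011} (see also Giles \cite{maingiles}): first fix the finest level $L$ so as to control the bias, then choose the sample sizes $N_\ell$ to minimise the total cost subject to a constraint on the statistical error. Throughout, write $V_\ell \coloneqq \mathrm{var}[Q_\ell - Q_{\ell-1}]$ for the per-sample variance (so that assumption II amounts to $V_\ell = O(h_\ell^\beta)$, since $\mathrm{var}[Y_\ell] = V_\ell/N_\ell$), and assume, as is standard, that the meshes are geometrically refined, $h_\ell = h_0\, s^{-\ell}$ with $s > 1$ fixed. Since $\mathrm{MSE}(\mathbb{E}[Q], Y) = |\mathbb{E}[Q_L - Q]|^2 + \mathrm{var}[Y]$, it suffices to make each of the two terms at most $\tfrac12\varepsilon^2$.

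For the bias, assumption I gives $|\mathbb{E}[Q_L - Q]| \le c_\alpha h_L^\alpha$, so taking $L$ to be the smallest integer with $c_\alpha h_L^\alpha \le \varepsilon/\sqrt{2}$ yields the squared-bias bound, together with $h_L \sim \varepsilon^{1/\alpha}$ and $L = O(\log \varepsilon^{-1})$. With $L$ now fixed, minimise the total cost $C = \sum_{\ell=0}^L N_\ell C_\ell$ over positive reals $N_\ell$ subject to $\sum_{\ell=0}^L V_\ell/N_\ell \le \tfrac12\varepsilon^2$. A Lagrange-multiplier computation gives the optimal weights $N_\ell \propto \sqrt{V_\ell/C_\ell}$; rounding up, one takes $N_\ell = \big\lceil 2\varepsilon^{-2}\sqrt{V_\ell/C_\ell}\,\big(\sum_{k=0}^L\sqrt{V_k C_k}\big)\big\rceil$. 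This choice satisfies the variance constraint, and since $N_\ell \le 2\varepsilon^{-2}\sqrt{V_\ell/C_\ell}\,\big(\sum_k\sqrt{V_k C_k}\big) + 1$, the cost is bounded by
\[
C \;\le\; 2\varepsilon^{-2}\Big(\sum_{\ell=0}^L \sqrt{V_\ell C_\ell}\Big)^2 + \sum_{\ell=0}^L C_\ell .
\]

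It remains to estimate these two terms using assumptions II and III, i.e.\ $\sqrt{V_\ell C_\ell} = O(h_\ell^{(\beta-\gamma)/2})$ and $C_\ell = O(h_\ell^{-\gamma})$. This is where the three regimes of \eqref{eq:complexity} emerge: if $\beta > \gamma$ the geometric series $\sum_\ell h_\ell^{(\beta-\gamma)/2}$ converges, so the first term is $O(\varepsilon^{-2})$; if $\beta = \gamma$ the sum has $O(L)$ equal terms, producing the extra factor $(\log\varepsilon)^2$; if $\beta < \gamma$ the sum is dominated by its last term $h_L^{(\beta-\gamma)/2} \sim \varepsilon^{(\beta-\gamma)/(2\alpha)}$, giving $O(\varepsilon^{-2-(\gamma-\beta)/\alpha})$. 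The additive overhead $\sum_{\ell=0}^L C_\ell$ is dominated by its largest term, of order $h_L^{-\gamma} \sim \varepsilon^{-\gamma/\alpha}$ (up to a factor $L$ in the borderline case). The main point—and the one place real care is needed—is that in each regime the hypothesis $\alpha \ge \tfrac12\min(\beta,\gamma)$ is exactly what forces this overhead to be dominated by the main term: for $\beta \ge \gamma$ it gives $\gamma/\alpha \le 2$, and for $\beta < \gamma$ it gives $\gamma/\alpha \le 2 + (\gamma-\beta)/\alpha$. The only real obstacle is the bookkeeping around the ceiling functions: one must verify that rounding up still meets the variance constraint while keeping the $+\sum_\ell C_\ell$ contribution under control, and track the constants so that the final $c$ is independent of $\alpha,\beta,\gamma$; there is no conceptual difficulty beyond this.
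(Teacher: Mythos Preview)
Your proposal is correct and follows exactly the classical argument of Cliffe et al.\ \cite{Cliffe2011} and Giles \cite{maingiles}, which is precisely what the paper does: it states the theorem as a citation from \cite{Cliffe2011,maingiles} without giving its own proof, and then quotes the formulas \eqref{eq:L} and \eqref{eq:N_ell} for $L$ and $N_\ell$ that your Lagrange-multiplier computation produces. There is nothing to compare beyond this.
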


For a given $\varepsilon$, from \cite{Cliffe2011} the maximum level $L$ in Theorem~\ref{thm:mlmc} is given by
\begin{equation}
    \label{eq:L}
    L = \big\lceil \alpha^{-1} \log_2(\sqrt{2}\,c_I\,\varepsilon^{-1}) \big\rceil,
\end{equation}
where $c_I$ is the implicit constant from Assumption \emph{I} (convergence of bias) above.
The optimal sample sizes,
$\{N_\ell\}$,
that minimize the computational cost of the multilevel estimator in 
Theorem~\ref{thm:mlmc} are obtained using a standard Lagrange multipliers argument as in \cite{Cliffe2011} and are given by
\begin{equation}
\label{eq:N_ell}
N_\ell= \Bigg\lceil 2\varepsilon^{-2}\sqrt{\frac{\mathrm{var}[Q_\ell-Q_{\ell-1}]}{C_\ell}}
\sum^{L}_{i=0}\sqrt{\mathrm{var}[Q_i-Q_{i-1}]C_i} \Bigg\rceil, \quad \ell = 0, \ldots, L.
\end{equation}

Since $\beta > 0$, Theorem~\ref{thm:mlmc} 
shows that for
all cases in \eqref{eq:complexity},
the MLMC complexity is superior to that of Monte Carlo.
When $\beta>\gamma$, the variance reduction rate is larger than the rate of increase of the computational cost, and thus most of the work is spent on the coarsest level. In this case, the multilevel estimator has the best computational complexity. When $\beta<\gamma$ the total computational work of the multilevel estimator may only have a marginal improvement compared to that of the classic Monte Carlo method. 

\begin{corollary}[Order of convergence]
\label{cor:conv}
For $\bsomega \in \Omega$, let $h > 0$ be sufficiently small and consider two finite element approximations, cf. (\ref{eq:evp-fe}), of the smallest eigenvalue $\lambda(\bsomega)$ of the eigenvalue problem \eqref{eq:varevp} with $h_{\ell-1} = h$ and $h_{\ell} = h/2$.
The expectation of their difference is bounded by
\begin{equation}
\big|\mathbb{E}[\lambda_\ell(\bsomega)-\lambda_{\ell-1}(\bsomega)]\big|\leq c_1h_\ell^{2},
\end{equation}
while the variance of the difference is bounded by
\begin{equation}
\mathrm{var}[\lambda_\ell(\bsomega)-\lambda_{\ell-1}(\bsomega)]\leq c_2h_\ell^{4},
\end{equation}
for two constants $c_1, c_2$ that are independent of $\bsomega$, $h$ and $\ell$.
\end{corollary}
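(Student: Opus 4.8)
The plan is to obtain both estimates as immediate consequences of the single-level finite element error bound in Theorem~\ref{thm:fe}, whose constants are already uniform in $\bsomega$. For the bias, I would fix $\bsomega \in \Omega$, insert the exact eigenvalue $\lambda(\bsomega)$, and apply the triangle inequality together with \eqref{eq:lam-fe-err} at the two mesh widths $h_\ell$ and $h_{\ell-1} = 2 h_\ell$:
\[
|\lambda_\ell(\bsomega) - \lambda_{\ell-1}(\bsomega)| \,\le\, |\lambda_\ell(\bsomega) - \lambda(\bsomega)| + |\lambda(\bsomega) - \lambda_{\ell-1}(\bsomega)| \,\le\, C_\lambda \big(h_\ell^2 + h_{\ell-1}^2\big) \,=\, 5 C_\lambda\, h_\ell^2,
\]
with $C_\lambda$ independent of $\bsomega$, $h$ and $\ell$. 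Since this pointwise bound is uniform in $\bsomega$, taking the expectation over $\Omega$ and using $|\mathbb{E}[X]| \le \mathbb{E}[|X|]$ gives $\big|\mathbb{E}[\lambda_\ell(\bsomega) - \lambda_{\ell-1}(\bsomega)]\big| \le 5 C_\lambda h_\ell^2$, i.e., the first claim with $c_1 = 5 C_\lambda$.

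For the variance, I would use $\mathrm{var}(X) \le \mathbb{E}[|X|^2]$ and then the same pointwise bound, squared:
\[
\mathrm{var}\big[\lambda_\ell(\bsomega) - \lambda_{\ell-1}(\bsomega)\big] \,\le\, \mathbb{E}\big[\,|\lambda_\ell(\bsomega) - \lambda_{\ell-1}(\bsomega)|^2\,\big] \,\le\, \big(5 C_\lambda\big)^2 h_\ell^4,
\]
which is the second claim with $c_2 = 25 C_\lambda^2$. Both constants inherit their independence of $\bsomega$, $h$ and $\ell$ from $C_\lambda$.

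The only genuine content beyond these elementary manipulations---and the step I expect to require the most care---is the $\bsomega$-uniformity of the finite element constant $C_\lambda$ in Theorem~\ref{thm:fe}. In the Babu\v{s}ka--Osborn framework this constant depends on the size of the spectral gap, on the coercivity and continuity constants of the sesquilinear form, and on $H^2$-regularity bounds for the primal and dual eigenfunctions; none of these is obviously uniform in $\bsomega$. They are, however, all controlled uniformly: the gap by the spectral-gap Proposition above, the form constants by \eqref{eq:coerc}--\eqref{eq:bounded}, and the eigenfunction regularity by Theorem~\ref{thm:u-reg} (together with the fact that $|\lambda(\bsomega)|$ is bounded on the compact set $\Omega$ by continuity of the eigenvalue). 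This is precisely what the appendix proof of Theorem~\ref{thm:fe} establishes, so the corollary follows once that uniformity is available. As a side remark, the variance exponent could in principle be sharpened by exploiting that the eigenvalue error is quadratic in an eigenfunction error, but the crude bound above already gives $\beta = 4$, which for $\gamma = d \le 3$ places us in the most favourable case of Theorem~\ref{thm:mlmc}, so no refinement is needed here.
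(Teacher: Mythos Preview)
Your proof is correct and follows essentially the same route as the paper: both derive the bias and variance bounds directly from the $\bsomega$-uniform FE error estimate \eqref{eq:lam-fe-err} via the triangle inequality, with the only cosmetic difference being that you bound $|\lambda_\ell(\bsomega)-\lambda_{\ell-1}(\bsomega)|$ pointwise first and then square (yielding $c_2=25C_\lambda^2$), whereas the paper splits via $(a+b)^2\le 2(a^2+b^2)$ after taking expectations (yielding $c_2=34C_\lambda^2$). Your discussion of the $\bsomega$-uniformity of $C_\lambda$ is apt and matches the role of the appendix.
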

\begin{proof}
Applying Theorem~\ref{thm:fe}, since $C_\lambda$ is independent of $\bsomega$ we have
\begin{equation}
\big|\mathbb{E}[\lambda(\bsomega)-\lambda_\ell(\bsomega)]\big|
\leq
\mathbb{E}[|\lambda(\bsomega)-\lambda_\ell(\bsomega)|]\leq C_\lambda \Big(\frac{h}{2}\Big)^2,
\end{equation}
and
\begin{equation}
\big|\mathbb{E}[\lambda(\bsomega)-\lambda_{\ell-1}(\bsomega)]\big|
\leq
\mathbb{E}[|\lambda(\bsomega)-\lambda_{\ell-1}(\bsomega)|]\leq C_\lambda h^2.
\end{equation}
Therefore, by the triangle inequality, we have
\begin{equation}
\begin{aligned}
\big|\mathbb{E}[\lambda_\ell(\bsomega)-\lambda_{\ell-1}(\bsomega)]\big|
&=\big|\mathbb{E}[\lambda_\ell(\bsomega)-\lambda(\bsomega)+\lambda(\bsomega)-\lambda_{\ell-1}(\bsomega)]\big|
\\
&\leq\mathbb{E}[|\lambda(\bsomega)-\lambda_\ell(\bsomega)|]+\mathbb{E}[|\lambda(\bsomega)-\lambda_{\ell-1}(\bsomega)|]\\
&\leq C_\lambda \Big(h^2+\frac{h^2}{4}\Big)
=5 C_\lambda h_\ell^{2}.
\end{aligned}
\end{equation}

The variance reduction rate comes from the following relation
\begin{equation}
\mathrm{var}[\lambda(\bsomega)-\lambda_\ell(\bsomega)]\leq\mathbb{E}[(\lambda(\bsomega)-\lambda_\ell(\bsomega))^2]\leq C_\lambda^2\Big(\frac{h}{2}\Big)^4,
\end{equation}
and, similarly, by the Cauchy-Schwarz inequality 
\begin{align*}
\mathrm{var}[\lambda_\ell(\bsomega)-\lambda_{\ell-1}(\bsomega)]& \leq\mathbb{E}[(\lambda_\ell(\bsomega)-\lambda_{\ell-1}(\bsomega))^2] \\
& = \mathbb{E}[(\lambda_\ell(\bsomega)-\lambda(\bsomega) +\lambda(\bsomega) - \lambda_{\ell-1}(\bsomega))^2] \\\
& \leq 2 \big( \mathbb{E}[(\lambda(\bsomega)-\lambda_\ell(\bsomega))^2] +  \mathbb{E}[(\lambda(\bsomega) - \lambda_{\ell-1}(\bsomega))^2] \big) \\
& \leq 2 \Big( C_\lambda^2\Big(\frac{h}{2}\Big)^4 + C_\lambda^2h^4\Big) = 34 C_\lambda^2 h_\ell^4.
\end{align*}
\end{proof}

\begin{remark}
\label{rem:supg}
In our numerical experiments, we observed that the SUPG approximation of the eigenvalue problem, cf. (\ref{eq:evp-supg}), has similar rates of convergence $\alpha$ and $\beta$ in MLMC compared to the standard finite element approximation. 
\end{remark}

An important physical property of the smallest eigenvalue of 
\eqref{eq:varevp} is that it is real and strictly 
positive. Clearly, $\bbE[\lambda] > 0$ as well,
and so we would like our multilevel approximation 
\eqref{eq:mlmc_eig} to preserve this property.
Below we show that a multilevel approximation based on Galerkin FEM with
a geometrically-decreasing sequence of meshwidths
is strictly positive provided that $h_0$ is sufficiently small. 

\begin{proposition}
\label{prop:ML-pos}
Suppose that $h_\ell = h_0 2^{-\ell}$ for $\ell \in \N$ with $h_0 > 0$ sufficiently small and let $\lambda_{h_\ell}(\cdot)$ be the approximation of the smallest eigenvalue using the Galerkin FEM as in \eqref{eq:evp-fe}.
Then, for any $L \in \N$, the multilevel approximation 
of the smallest eigenvalue is strictly positive, i.e., 
\[
\widetilde{Y} \,\coloneqq\,
\sum_{\ell = 0}^L \widetilde{Y}_\ell
\,=\,
\sum_{\ell = 0}^L \frac{1}{N_\ell} \sum_{n = 1}^{N_\ell} 
\big(\lambda_{h_\ell}(\bsomega_{\ell, n}) - \lambda_{h_{\ell - 1}}(\bsomega_{\ell, n})\big)
\,>\, 0.
\]
\end{proposition}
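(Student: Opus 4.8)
The plan is to separate $\widetilde{Y}$ into its coarsest-level term $\widetilde{Y}_0$ and the sum of the correction terms $\widetilde{Y}_\ell$, $\ell \geq 1$, to bound $\widetilde{Y}_0$ from below by a fixed positive constant coming from coercivity, and to bound $\sum_{\ell \geq 1}|\widetilde{Y}_\ell|$ from above by a quantity of order $h_0^2$; taking $h_0$ small then forces the first to dominate the second, uniformly in $L$ and in the samples.

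\emph{Step 1 (coarsest level).} Recalling the telescoping convention $\lambda_{h_{-1}}\equiv 0$, we have $\widetilde{Y}_0 = \frac{1}{N_0}\sum_{n=1}^{N_0}\lambda_{h_0}(\bsomega_{0,n})$, an average of values of the smallest FE eigenvalue at level $0$. For $h_0$ sufficiently small this eigenvalue is real and simple (as required in Theorem~\ref{thm:fe}), so its eigenfunction may be normalised by $\|u_{h_0}(\bsomega)\|_{L^2}=1$, and choosing the test function equal to $u_{h_0}(\bsomega)$ in \eqref{eq:evp-fe} gives $\lambda_{h_0}(\bsomega)=\calA(\bsomega;u_{h_0}(\bsomega),u_{h_0}(\bsomega))+\calB(\bsomega;u_{h_0}(\bsomega),u_{h_0}(\bsomega))$. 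Since $V_{h_0}\subset V$, the coercivity bound \eqref{eq:coerc} together with Poincaré's inequality then yields $\lambda_{h_0}(\bsomega)\geq \amin\|u_{h_0}(\bsomega)\|_V^2\geq \amin C_{\mathrm{Poin}}^2 =: \underline{\lambda}>0$, exactly as in the proof of Theorem~\ref{thm:u-reg} but now on the discrete space. Averaging over the $N_0$ samples gives $\widetilde{Y}_0\geq\underline{\lambda}$.

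\emph{Step 2 (corrections).} For $\ell\geq 1$ I would use the triangle inequality together with the FE eigenvalue error bound \eqref{eq:lam-fe-err} of Theorem~\ref{thm:fe}, whose constant $C_\lambda$ is independent of $\bsomega$ and $h$: for every $\bsomega$, $|\lambda_{h_\ell}(\bsomega)-\lambda_{h_{\ell-1}}(\bsomega)|\leq C_\lambda(h_\ell^2+h_{\ell-1}^2)=5C_\lambda h_\ell^2 = 5C_\lambda h_0^2\,4^{-\ell}$, using $h_{\ell-1}=2h_\ell$ and $h_\ell=h_0 2^{-\ell}$. Hence $|\widetilde{Y}_\ell|\leq 5C_\lambda h_0^2\,4^{-\ell}$ for every realisation of the samples, and summing the geometric series, $\sum_{\ell=1}^{L}|\widetilde{Y}_\ell|\leq 5C_\lambda h_0^2\sum_{\ell\geq 1}4^{-\ell}=\tfrac{5}{3}C_\lambda h_0^2$, a bound uniform in $L$.

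\emph{Step 3 (conclusion).} Combining the two estimates, $\widetilde{Y}\geq\widetilde{Y}_0-\sum_{\ell=1}^{L}|\widetilde{Y}_\ell|\geq\underline{\lambda}-\tfrac{5}{3}C_\lambda h_0^2$, which is strictly positive as soon as $h_0< C_{\mathrm{Poin}}\sqrt{3\amin/(5C_\lambda)}$; together with the smallness already needed for Theorem~\ref{thm:fe} to apply, this holds for $h_0$ sufficiently small, and the resulting bound does not depend on $L$ or on the samples, which is the claim. The only step needing real care — and the main obstacle — is the uniform positive lower bound on the discrete level-zero eigenvalue in Step 1: one must first know that for small $h_0$ the smallest FE eigenvalue is genuinely real and simple (otherwise ``the smallest eigenvalue'' is ambiguous and may even belong to a complex-conjugate pair, precisely the pathology SUPG is introduced to avoid), after which the coercivity-plus-Poincaré argument on $V_{h_0}$ goes through verbatim; everything else is routine triangle-inequality and geometric-series estimation.
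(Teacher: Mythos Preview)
Your proof is correct and follows the same overall decomposition as the paper: bound $\widetilde{Y}_0$ from below, bound each correction $\widetilde{Y}_\ell$ by $O(h_0^2 4^{-\ell})$ via the triangle inequality and Theorem~\ref{thm:fe}, sum the geometric series, and choose $h_0$ small. Steps~2 and~3 match the paper essentially verbatim (your constant $5$ is in fact the right one; the paper uses a slightly looser $9$).

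The only genuine difference is in Step~1. The paper bounds the continuous eigenvalue from below by $\widecheck{\lambda}\coloneqq\min_{\bsomega}\lambda(\bsomega)>0$ and then writes $\lambda_{h_0}(\bsomega)=\lambda(\bsomega)-(\lambda(\bsomega)-\lambda_{h_0}(\bsomega))\geq\widecheck{\lambda}-C_\lambda h_0^2$, i.e., it routes the discrete lower bound through the continuous one plus the FE error. You instead apply coercivity~\eqref{eq:coerc} and Poincar\'e directly on $V_{h_0}\subset V$ to get $\lambda_{h_0}(\bsomega)\geq \amin C_{\mathrm{Poin}}^2$, a bound that is explicit and independent of $h_0$. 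Your route is a little cleaner and gives a slightly sharper threshold on $h_0$; the paper's route has the advantage that it never needs the discrete eigenpair to be real or the coercivity argument to be rerun at the discrete level, relying only on the error bound~\eqref{eq:lam-fe-err}. Your acknowledgement that Step~1 requires the smallest FE eigenvalue to be real and simple for small $h_0$ is exactly the point that needs care, and it is indeed guaranteed by Theorem~\ref{thm:fe} together with the simplicity of $\lambda(\bsomega)$.
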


\begin{proof}
First, since $\lambda$ is continuous and strictly positive on $\Omega$ it can be bounded uniformly from below, i.e., there
exists $\widecheck{\lambda} > 0$ such that
\begin{equation}
\label{eq:lambda-lower}
\lambda(\bsomega) \,\geq\, \widecheck{\lambda} > 0
\quad \text{for all } \bsomega \in \Omega.
\end{equation}

For $\ell = 0$, using 
\eqref{eq:lam-fe-err} and \eqref{eq:lambda-lower} we can bound $\lambda_{h_0}(\bsomega)$ 
uniformly from below by
\begin{align*}
\lambda_{h_0}(\bsomega) \,=\,
\lambda(\bsomega) - 
\big(\lambda(\bsomega) - \lambda_{h_0}(\bsomega)\big)
\,\geq\, \widecheck{\lambda}  - C_\lambda h_0^2\,.
\end{align*}
Since this bound is independent of $\bsomega$,
it follows that
\begin{equation}
\label{eq:Y0-lower}
\widetilde{Y}_0 \,\coloneqq\,
\frac{1}{N_0} \sum_{n = 1}^{N_0} 
\lambda_{h_0}(\bsomega_{0, n})
\,\geq\, \frac{1}{N_0} \sum_{n = 1}^{N_0} 
\big(\widecheck{\lambda}  - C_\lambda h_0^2\big)
\,=\, \widecheck{\lambda}  - C_\lambda h_0^2.
\end{equation}

Similarly, for $\ell \geq 1$ using \eqref{eq:lam-fe-err}
we obtain
\begin{align*}
\lambda_{h_\ell}(\bsomega) - \lambda_{h_{\ell - 1}}(\bsomega) 
\,&=\, \lambda(\bsomega) - \lambda_{h_{\ell - 1}} - 
\big(\lambda(\bsomega) - \lambda_{h_\ell}(\bsomega)\big)
\\
&\geq\, - \big|\lambda(\bsomega) - \lambda_{h_{\ell - 1}}\big| - 
\big|\lambda(\bsomega) - \lambda_{h_\ell}(\bsomega)\big|
\\
&\geq\, -C_\lambda\big( h_{\ell - 1}^2 + h_\ell^2\big)
\,=\, -9C_\lambda h_0^2 \,2^{-2\ell}\,.
\end{align*}
Again, this bound is independent of $\bsomega$
and so
\begin{equation}
\label{eq:Y_ell-lower}
\widetilde{Y}_\ell \,\coloneqq\, \frac{1}{N_\ell} \sum_{n = 1}^{N_\ell}
\big(\lambda_{h_\ell}(\bsomega_{\ell, n}) - \lambda_{h_{\ell - 1}}(\bsomega_{\ell, n})\big)
\,\geq\, -9C_\lambda h_0^2 \,2^{-2\ell}\,.
\end{equation}

Finally, we bound the multilevel approximation $\widetilde{Y}$ from below using \eqref{eq:Y0-lower}
and \eqref{eq:Y_ell-lower} as follows,
\begin{align*}
\widetilde{Y} \,&=\, \widetilde{Y}_0 + \sum_{\ell = 1}^L \widetilde{Y}_\ell
\,\geq\, \widecheck{\lambda}  - C_\lambda h_0^2
- \sum_{\ell = 1}^L 9C_\lambda h_0^2 \,2^{-2\ell}
\\
\,&>\,
\widecheck {\lambda} -9C_\lambda h_0^2 \sum_{\ell = 0}^L 2^{-2\ell}
\,>\,
\widecheck {\lambda} -9C_\lambda h_0^2 \sum_{\ell = 0}^\infty 2^{-2\ell}
\,=\, 
\widecheck {\lambda} -12C_\lambda h_0^2\,>\, 0,
\end{align*}
where we have used the property that $h_0$ is sufficiently small, i.e., $h_0 \leq \sqrt{\widecheck{\lambda}/(12C_\lambda)}$,
to ensure $\widetilde{Y} > 0$, as required.
\end{proof}

The result above can be extended beyond the geometric sequence of FE meshwidths to a general sequence of FE meshwidths, provided that $\sum_{\ell = 0}^L h_\ell^2$ is sufficiently small.
Similarly, as in Remark~\ref{rem:supg}, we observe that the 
MLMC approximations based on SUPG are also strictly positive.

Choosing the number of iterations $K_\ell$ such that the 
error of the eigensolver is of the same order as the FE error on 
each level, i.e., $|\lambda_{h_\ell}(\bsomega) - \lambda_{h_\ell, K_\ell}(\bsomega)| \lesssim h_\ell^2$ for all $\ell = 0, 1, \ldots, L$ and $\bsomega \in \Omega$, it can similarly be shown that the multilevel approximation \eqref{eq:mlmc_eig} also satisfies $Y > 0$.

To obtain the eigenvalue approximation on level $\ell$, choosing a basis for the FE space $V_\ell \coloneqq V_{h_\ell}$ in \eqref{eq:evp-fe} leads to a generalized (algebraic) eigenproblem in matrix form for each sample $\bsomega$, i.e.,
\begin{equation}
\label{eq:evp-matrix}
\bb{A_\ell}(\bsomega) \bb{u}_\ell(\bsomega) =\lambda_\ell(\bsomega) \bb{M_\ell}(\bsomega) \bb{u}_\ell(\bsomega),
\end{equation}
where $\bb{u}_\ell(\bsomega)$ is the coefficient vector (with respect to the basis) and $\bb{A}_\ell(\bsomega)$, $\bb{M}_\ell(\bsomega)$ are the associated FE matrices corresponding to the mesh $\mathcal{T}_\ell:=\mathcal{T}_{h_\ell}$.
The number of iterations $K$ in the computational cost per sample, as well as the rate of the cost per iteration 
depend on the choice of the algebraic eigensolver.
A variety of solvers can be applied here to solve the generalized eigenvalue problem \eqref{eq:evp-matrix}, including power iteration, the QR algorithm, subspace iterations, etc.
For our purposes, we only need an eigensolver that is able to compute the smallest eigenvalue, which is real and simple. As such, we consider here two eigenvalue solvers, the \emph{Rayleigh quotient iteration} and the \emph{implicitly restarted Arnoldi method}.

\begin{algorithm}[h!]
\caption{The Rayleigh quotient iteration (RQI).}
\begin{algorithmic}[1]
\STATE{Input: ($\mathbf{A}, \mathbf{M}, \bm{\eta}_0, \bm{\xi}_0, \lambda_0, \varepsilon, $M$)$, where $\bm\eta_0, \bm\xi_0, \lambda_0, \varepsilon$ and $M$ are initial left and right eigenvectors, the initial eigenvalue, the error tolerance, and the maximum number of iterations, respectively}
\STATE{Set $i\leftarrow 0$}
\WHILE{$\|\mathbf{A}\bm{\eta}_i-\lambda\mathbf{M}\bm{\eta}_i\|>\varepsilon$ and $i \leq M$}
	\STATE{Normalize $\bm\eta_{i}\leftarrow\bm{\eta}_{i}\|\bm\eta_{i}\|^{-1}_2$}
	\STATE{Normalize $\bm\xi_{i}\leftarrow \bm\xi_{i}\|\bm\xi_{i}\|_2^{-1}$}
	\STATE{Solve $(\lambda_{i}\mathbf{M}-\mathbf{A})\bm\eta_{i+1}=\bm\eta_{i}$}
	\STATE{Solve $(\lambda_{i}\mathbf{M}-\mathbf{A})^H\bm\xi_{i+1}=\bm\xi_{i}$}
	\STATE{Compute $\lambda_{i+1}\leftarrow (\bm{\xi}^H_{i+1}\mathbf{A}\bm{\eta}_{i+1})(\bm{\xi}^H_{i+1}\mathbf{M}\bm{\eta}_{i+1})^{-1}$}
	\STATE{$i\leftarrow i + 1$}
\ENDWHILE
\STATE{Output: ($\bm{\eta}, \bm{\xi}, \lambda)$}
\end{algorithmic}
\label{alg:rqi}
\end{algorithm}

We first consider the Rayleigh quotient iteration (Alg.~\ref{alg:rqi}), 
introduced first by Lord Rayleigh in 1894 for a quadratic eigenproblem of oscillations of a mechanical system \cite{Rayleigh1894} and then extended in the 1950s and 60s to non-symmetric generalized eigenproblems  \cite{Crandall1951,1957ArRMA}. 
The following lemma, whose proof can be found in Crandall~\cite{Crandall1951} and Ostrowski~\cite{1957ArRMA}, establishes the error reduction rate of the Rayleigh quotient iteration, which will in turn help to bound the computational cost on each level.
\begin{lemma}
	Suppose we have an initial guess $\lambda_{\ell,0}(\bsomega)$ to the eigenvalue $\lambda_\ell(\bsomega)$ at the level $\ell$ and $|\lambda_{\ell,0}(\bsomega)-\lambda_\ell(\bsomega)|$ is sufficiently small. Then the sequence $\lambda_{\ell,i}(\bsomega)$ converges to $\lambda_\ell(\bsomega)$ quadratically\label{lem:rq1}, i.e., there exists a constant $\hat C(\bsomega)$ such that
\begin{equation}
		|\lambda_{\ell,i+1}(\bsomega)-\lambda_\ell(\bsomega)|\leq \hat C(\bsomega)|\lambda_{\ell,i}(\bsomega)-\lambda_\ell(\bsomega)|^2.
\end{equation}
\end{lemma}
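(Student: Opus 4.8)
The plan is to reduce the lemma to two local estimates---a contraction estimate for the eigenvector iterates under one shift--invert solve, and a second-order (stationarity) estimate for the two-sided Rayleigh quotient---and then to close a short induction. Fix $\bsomega \in \Omega$ and the level $\ell$ and suppress both from the notation, so that we work with the fixed matrix pencil $(\mathbf{A},\mathbf{M}) \coloneqq (\mathbf{A}_\ell(\bsomega),\mathbf{M}_\ell(\bsomega))$ from \eqref{eq:evp-matrix}. Since $h_\ell$ is small, the target eigenvalue $\lambda \coloneqq \lambda_\ell(\bsomega)$ is simple; write $\bm{\eta}^\star,\bm{\xi}^\star$ for its $2$-normalised right and left eigenvectors, so that $\mathbf{A}\bm{\eta}^\star = \lambda\mathbf{M}\bm{\eta}^\star$, $(\bm{\xi}^\star)^H\mathbf{A} = \lambda(\bm{\xi}^\star)^H\mathbf{M}$ and $(\bm{\xi}^\star)^H\mathbf{M}\bm{\eta}^\star \neq 0$, and let $\rho_\ell(\bsomega) > 0$ denote the separation of $\lambda$ from the rest of the spectrum of the pencil. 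I measure the error after step $i$ by $\varepsilon_i \coloneqq |\lambda_{\ell,i}(\bsomega) - \lambda|$ together with the angles $\theta_i \coloneqq \angle(\bm{\eta}_i,\bm{\eta}^\star)$ and $\phi_i \coloneqq \angle(\bm{\xi}_i,\bm{\xi}^\star)$ between the current iterates and the exact eigenvectors. As in \cite{Crandall1951,1957ArRMA}, the hypothesis ``$|\lambda_{\ell,0}(\bsomega)-\lambda_\ell(\bsomega)|$ sufficiently small'' is to be read together with $\sin\theta_0$ and $\sin\phi_0$ lying below a threshold depending on $\rho_\ell(\bsomega)$ and on the conditioning of the pencil; in the MLMC setting this is automatic, since the triple $(\bm{\eta}_0,\bm{\xi}_0,\lambda_{\ell,0})$ fed to Algorithm~\ref{alg:rqi} is the (interpolated) level-$(\ell-1)$ eigenpair, which is $O(h_{\ell-1})$-accurate.

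\emph{The two one-step estimates.} For the eigenvector contraction I would use the Keldysh/spectral decomposition of the resolvent about the simple eigenvalue, $(\lambda_{\ell,i}\mathbf{M}-\mathbf{A})^{-1} = (\lambda_{\ell,i}-\lambda)^{-1}\mathbf{Q} + \mathbf{R}_i$, where $\mathbf{Q}$ is the rank-one residue at $\lambda$ and $\mathbf{R}_i$ is the regular part, which stays bounded for $\varepsilon_i \le \rho_\ell(\bsomega)/2$. Applying this to the solve $(\lambda_{\ell,i}\mathbf{M}-\mathbf{A})\bm{\eta}_{i+1}=\bm{\eta}_i$ in line~6 of Algorithm~\ref{alg:rqi}, the component of $\bm{\eta}_i$ along $\bm{\eta}^\star$ is amplified by a factor of order $\varepsilon_i^{-1}$ while the remainder stays $O(1)$, so after the renormalisation of lines~4--5 one obtains $\sin\theta_{i+1} \le C_E(\bsomega)\,\varepsilon_i$, with $C_E(\bsomega)$ absorbing $\rho_\ell(\bsomega)^{-1}$ and the eigenvector condition number of the (non-normal) pencil---a sharper bound carrying an extra factor $\sin\theta_i$ holds in the symmetric case but is not needed here; the same argument applied to $\mathbf{A}^H,\mathbf{M}^H$ (line~7) gives $\sin\phi_{i+1} \le C_E(\bsomega)\,\varepsilon_i$. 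For the Rayleigh quotient I would write the normalised iterates as $\bm{\eta}_{i+1}=\bm{\eta}^\star+\bm{r}$ and $\bm{\xi}_{i+1}=\bm{\xi}^\star+\bm{s}$ (after matching phases) with $\|\bm{r}\| \lesssim \sin\theta_{i+1}$, $\|\bm{s}\| \lesssim \sin\phi_{i+1}$, and exploit $\mathbf{A}\bm{\eta}^\star=\lambda\mathbf{M}\bm{\eta}^\star$ and $(\bm{\xi}^\star)^H\mathbf{A}=\lambda(\bm{\xi}^\star)^H\mathbf{M}$ to see that the first-order terms cancel: $\bm{\xi}_{i+1}^H\mathbf{A}\bm{\eta}_{i+1}-\lambda\,\bm{\xi}_{i+1}^H\mathbf{M}\bm{\eta}_{i+1} = \bm{s}^H(\mathbf{A}-\lambda\mathbf{M})\bm{r}$, which is $O(\|\bm{r}\|\,\|\bm{s}\|)$, while $\bm{\xi}_{i+1}^H\mathbf{M}\bm{\eta}_{i+1} = (\bm{\xi}^\star)^H\mathbf{M}\bm{\eta}^\star + O(\|\bm{r}\|+\|\bm{s}\|)$ stays bounded away from $0$ once the eigenvector errors are small. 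Dividing, the number $\lambda_{\ell,i+1}(\bsomega)$ formed in line~8 satisfies $\varepsilon_{i+1} \le C_R(\bsomega)\,\sin\theta_{i+1}\,\sin\phi_{i+1}$.

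\emph{Combination, induction, and the main obstacle.} Chaining the two estimates gives $\varepsilon_{i+1} \le C_R(\bsomega)\,C_E(\bsomega)^2\,\varepsilon_i^2 \eqqcolon \widehat{C}(\bsomega)\,\varepsilon_i^2$, which is exactly the asserted inequality. A routine induction---beginning from the initial smallness above, using that $\sin\theta_{i+1},\sin\phi_{i+1} \lesssim \varepsilon_i$ keeps the eigenvector errors (and hence the $O(\cdot)$ remainders and the denominator bound) small, and that $\widehat{C}(\bsomega)\varepsilon_i < 1$ keeps $\varepsilon_i$ decreasing---then shows that all of these estimates persist for every $i$, so $\varepsilon_i \to 0$ at the claimed quadratic rate; every constant depends on $\bsomega$ only through the finite matrices $\mathbf{A}_\ell(\bsomega),\mathbf{M}_\ell(\bsomega)$, which is why the rate constant is written $\widehat{C}(\bsomega)$. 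The main obstacle is the non-normality of the pencil: one cannot work in an orthonormal eigenbasis, so the eigenvector-contraction step has to be carried through the regular part $\mathbf{R}_i$ of the resolvent and the oblique normalisation of lines~4--5, with careful tracking of how the eigenvector condition number enters $C_E(\bsomega)$ and of the requirement that it not degrade as the shift contracts. Since the paper only needs the existence of $\widehat{C}(\bsomega)$ per realisation, one may simply invoke the detailed non-normal analyses of Crandall \cite{Crandall1951} and Ostrowski \cite{1957ArRMA} for the two one-step estimates rather than reproving them.
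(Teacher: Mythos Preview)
The paper does not actually prove this lemma: it simply states that the proof can be found in Crandall~\cite{Crandall1951} and Ostrowski~\cite{1957ArRMA}, and moves on. Your proposal is therefore not in competition with a proof in the paper but is a (correct) sketch of the classical argument underlying those references---the resolvent expansion about a simple eigenvalue to get the shift--invert contraction on the eigenvector angles, and the two-sided stationarity of the generalised Rayleigh quotient to upgrade this to a quadratic eigenvalue estimate. You even close by noting that one may simply invoke \cite{Crandall1951,1957ArRMA} for the one-step estimates, which is exactly what the paper does; so your write-up is strictly more detailed than the paper's treatment while being fully consistent with it.
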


The computational cost of Rayleigh quotient iteration (RQI) is dominated by the cost of solving two linear systems in each iteration (cf.~Lines 6 and 7 of Alg.~\ref{alg:rqi}).
For direct solvers, such as LU decomposition, the computational cost depends on the sparsity and bandwidth of the matrices, e.g., for piecewise linear FE applied to \eqref{eq:varevp} and $d = 2$, the cost for solving these linear systems on level $\ell$ is $O(h_\ell^{-3})$ \cite{george1988complexity}. However, optimal iterative solvers, such as geometric multigrid methods, are able to achieve the optimal computational complexity of (or close to) $O(h_\ell^{-d})$.  All other steps in Alg.~\ref{alg:rqi} are linear in the degree of freedoms, and thus $O(h_\ell^{-d})$. Hence, typically the cost per iteration grows with rate $\gamma \ge d$, but it can be as big as $\gamma =3$ for $d=2$. The remaining factor in the computational cost is the number of iterations $K$ for the Rayleigh quotient iteration within the MLMC estimator, but this is independent of $h_\ell$. 

\begin{algorithm}[h!]
	\caption{Three-grid Rayleigh Quotient iteration (tgRQI).}
\begin{algorithmic}[1]
\STATE{Input: ($\mathbf{A}_\ell, \mathbf{A}_{\ell-1}, \mathbf{A}_0, \mathbf{M}_\ell, \mathbf{M}_{\ell-1}, \mathbf{M}_0, \bm\eta_0,\bm\xi_0,\lambda_0, \ell$), where $\bm\eta_0', \bm\xi_0', \lambda_0'$ are the initial left and right eigenvectors at level $0$, and the initial eigenvalue.}
\STATE{$\varepsilon\leftarrow10^{-10}$, $M\leftarrow1000$}
\STATE{$(\bm\eta_0,\bm\xi_0,\lambda_0)\leftarrow$ RQI ($\mathbf{A}_{0}, \mathbf{M}_{0}, \bm\eta_0',\bm\xi_0',\lambda_0',\varepsilon,M$)}
\STATE{Interpolate the eigenfunctions from $V_0$ on $\mathcal{T}_0$ onto $V_{\ell-1}$ on $\mathcal{T}_{\ell-1}$:\\ \qquad $(\bm\eta_{\ell-1}',\bm\xi_{\ell-1}')\leftarrow(\bm\eta_0,\bm\xi_0)$}
\STATE{$(\bm\eta_{\ell-1},\bm\xi_{\ell-1},\lambda_{\ell-1})\leftarrow$ RQI ($\mathbf{A}_{\ell-1}, \mathbf{M}_{\ell-1}\bm\eta_{\ell-1}',\bm\xi_{\ell-1}',\lambda_{0},\varepsilon,M$)}
\IF{$\ell-1 = 0$}
\STATE{Output: $\lambda_1-\lambda_0$}
\ELSE
\STATE{Interpolate the eigenfunctions from $V_{\ell-1}$ on $\mathcal{T}_{\ell-1}$ onto $V_{\ell}$ on $\mathcal{T}_{\ell}$: $(\bm\eta_{\ell}',\bm\xi_{\ell}')\leftarrow(\bm\eta_{\ell-1},\bm\xi_{\ell-1})$}
\STATE{$(\bm\eta_\ell,\bm\xi_\ell,\lambda_\ell)\leftarrow$ RQI ($\mathbf{A}_\ell, \mathbf{M}_\ell, \bm\eta_\ell',\bm\xi_\ell',\lambda_{\ell-1},\varepsilon,M$)}
\STATE{Output: $\lambda_\ell - \lambda_{\ell-1}$}
\ENDIF
	\end{algorithmic}
	\label{alg:threegrid}
\end{algorithm}

Recall the MLMC estimator \eqref{eq:mlmc_eig}, where at each level $\ell$ we compute the differences $\lambda_{\ell}(\bsomega_n)-\lambda_{\ell-1}(\bsomega_n)$ for the same sample $\bsomega_n$. The number of RQI iterations needed for a sufficiently accurate approximation of $\lambda_{\ell}(\bsomega_n)$ -- the more costly level $\ell$ computation --  
 can be significantly reduced by using the computed approximation of the eigenvalue 
 $\lambda_{\ell-1}(\bsomega_n)$ on the coarser level as the initial guess, thus also reducing the total computational cost. In fact, we design a three-grid method, similar to the one used in \cite{Gilbert2021} to implement this strategy, which uses the approximate eigenvalue $\lambda_{0}(\bsomega_n)$ on level zero with mesh size $h_0$ as the initial guess for computing eigenvalue $\lambda_{\ell-1}(\bsomega_n)$ on level $\ell-1$. Then, $\lambda_{\ell-1}(\bsomega_n)$ is used as the initial guess for computing $\lambda_{\ell}(\bsomega_n)$; see Alg.~\ref{alg:threegrid} for details.

To estimate the computational cost of this three-grid method, we choose again $h_{\ell-1} = h = 2 h_{\ell}$ and denote the exact discrete eigenvalues on level $\ell-1$ and level $\ell$ by $\lambda_{h}(\bsomega_n)$ and $\lambda_{h/2}(\bsomega_n)$, respectively. The goal is to control the errors of the eigenvalues $\lambda_{\ell-1}(\bsomega_n)$ and $\lambda_{\ell}(\bsomega_n)$ actually computed using Alg.~\ref{alg:threegrid} to be within the respective discretization errors. 
Due to the quadratic convergence rate of the RQI (cf. Lemma \ref{lem:rq1}), often only two or three iterations are sufficient to compute a sufficiently accurate approximation $\lambda_{0}(\bsomega_n)$ on Level 0 in Line 3 of Alg.~\ref{alg:threegrid}. Similarly, in Line 5 of Alg.~\ref{alg:threegrid}, two to three iterations of RQI are again sufficient to ensure that the error of the estimated eigenvalue $\lambda_{\ell-1}(\bsomega_n)$ satisfies
\[
| \lambda_{\ell-1}(\bsomega_n) - \lambda_h(\bsomega_n) |
\leq C_\lambda h_{\ell-1}^2, 
\]
which is the bound on the discretization error on level $\ell -1$ in Theorem \ref{thm:fe}. When $\lambda_{\ell-1}(\bsomega_n)$ is then used as the initial guess for estimating $\lambda_{h/2}(\bsomega_n)$, the initial error satisfies
\[
| \lambda_{\ell-1}(\bsomega_n) - \lambda_{h/2}(\bsomega_n) | \leq | \lambda_{\ell-1}(\bsomega_n) - \lambda_h(\bsomega_n) | + |\lambda_h(\bsomega_n) - \lambda_{h/2}(\bsomega_n)|\leq \frac94 C_\lambda h^2,
\]
using triangle inequality and Theorem \ref{thm:fe} again. Therefore, using Lemma \ref{lem:rq1} for sufficiently small mesh size $h$ such that $h \leq \frac29 \big(\hat C(\bsomega_n) C_\lambda \big)^{-1/2}$, 
one single iteration of RQI on level $\ell$ suffices such that
\[
| \lambda_{\ell}(\bsomega_n) - \lambda_{h/2}(\bsomega_n) |
\leq C_\lambda h_{\ell}^2.
\]

In practice, two iterations of RQI are typically used to achieve the target accuracy for $\lambda_\ell(\bsomega_n)$ in Line 10 of Alg.~\ref{alg:threegrid}. These two calls to RQI dominate the computational cost of Alg.~\ref{alg:threegrid} with their four linear solves. Hence, for sparse direct solvers and $d=2$, the overall computational cost of Alg.~\ref{alg:threegrid} is $O(h_\ell^{-3})$ and $\gamma = 3$ in Theorem \ref{thm:mlmc}.
The computational complexity of Alg.~\ref{alg:threegrid} can be further reduced using multigrid-based methods to efficiently solve the Rayleigh quotient iterations~\cite{Cai1997} that potentially offer a rate of $\gamma = d$ (or close to) even in three dimensions. However, it is unclear if the same rate of convergence as for self-adjoint operators can be retained for the convection-dominated problems we are considering here.

\begin{algorithm}
\caption{Multilevel Monte Carlo algorithm.}
\begin{algorithmic}[1]
\FOR{$i=1\ldots N_0$}
\STATE{Draw a sample $\bsomega_i$}
\STATE{Compute $\lambda_0(\bsomega_i)$ using either Alg.~\ref{alg:rqi} or ARPACK }
\ENDFOR
\FOR{$\ell=1\ldots L$}
\FOR{$i=1\ldots N_\ell$}
\STATE{Draw a sample $\bsomega_i$}
\STATE{Compute $\lambda_\ell(\bsomega_i)-\lambda_{\ell-1}(\bsomega_i)$ using either Alg.~\ref{alg:threegrid} or ARPACK}
\ENDFOR
\ENDFOR
\end{algorithmic}
\label{alg:mlmc}
\end{algorithm}

We also consider the implicitly restarted Arnoldi method \cite{Arnoldi1951,Lehoucq1995,Saad1980,Saad1984,Scott1995}
and its implementation in the library ARPACK \cite{ARPACK} to solve the eigenvalue problem. 
Compared to the Rayleigh quotient iteration, the Arnoldi method calculates a specified number of eigenpairs that depend on the dimension of the Krylov subspace.
The performance of the implicitly restarted Arnoldi method is determined by several factors such as the dimension of the Krylov subspace and the initial vector. To the best of the authors' knowledge, for the eigenvalue problem \eqref{eq:evp-fe} we are considering here, the convergence rate, and therefore the computational cost, of the implicitly restarted Arnoldi method is not yet known. As such, we numerically estimate the rate variable $\gamma$ and the computational cost $C_\ell$ for determining the optimal sample sizes in MLMC. It appears that the number of iterations grows slightly faster than  $O(h_\ell^{-1})$ leading to a similar total complexity as RQI for $d=2$ of $\gamma \approx 3.5$.

\section{Extensions of MLMC method}
\label{sec:ext_mlmc}
In this section, we introduce two extensions of the MLMC method for convection-diffusion eigenvalue problems.
First, we employ a homotopy method to add stability to the eigensolve for each sample.
Second, we replace the Monte Carlo approximation of the expected value on each level in \eqref{eq:telesum}
with a quasi-Monte Carlo (QMC) method, which, due to the faster convergence of QMC, 
allows us to use less samples on each level and improves the overall complexity.

\subsection{Homotopy multilevel Monte Carlo method}
\label{sec:homotopy}

In Carstensen et al.~\cite{homo2011} a homotopy method is employed to solve convection-diffusion eigenvalue problems with deterministic coefficients, using the homotopy method to derive adaptation strategies for FE methods.
The authors also provided estimates on the convergence rate of the smallest eigenvalue with respect to the homotopy parameter.
We aim to investigate the application of this homotopy method in the MLMC method, particularly in designing multilevel models for alleviating numerical instability (due to the high advection velocity) on coarser meshes.

For eigenvalue problems, the homotopy method~\cite{homo1997} uses an initial operator $\mathcal{L}_0$---for which the target eigenvalue is easier to compute than that of the original operator $\mathcal{L}$---to form a continuation \begin{equation}
	\mathcal{L}_t = (1-f(t))\mathcal{L}_0+f(t)\mathcal{L} \mspace{10mu} \text{for} \mspace{10mu} 0\leq t\leq 1,
	\label{eq:homo}
\end{equation}
with a function $f:[0;1]\rightarrow[0;1]$ and $f(0)=0$, $f(1)=1$. 
For the convection-diffusion operator in~(\ref{convdiffeq}), it is natural to set the diffusion operator as the initial operator. Here we consider a simple linear function $f(t)=t$ to design the sequence of operators used for the homotopy. Given a sequence of homotopy parameters, $0 = t_0 < t_1 < \cdots < t_L = 1$, the homotopy operators with stochastic coefficients define a sequence of eigenvalue problems of the form 
\begin{align}\label{eq:homo_eig}
	\mathcal{H}(\bsomega,t_\ell) u(\bsomega,t_\ell) & =-\nabla\cdot\big(\kappa(\bsomega)\nabla u(\bsomega,t_\ell)\big) + t_\ell\big(\mathbf{a}(\bsomega)\cdot\nabla u(\bsomega,t_\ell)\big) \nonumber \\
	& = \lambda(\bsomega,t_\ell) u(\bsomega,t_\ell),
\end{align}
for $\ell = 0, \ldots, L$. The following lemma \cite[Lemma 4.1]{homo2011} establishes the homotopy error on the smallest eigenvalue in \eqref{eq:homo_eig} for fixed $\bsomega$.
\begin{lemma}\label{lem:homotop}
Suppose the velocity field $\bb{a}$ is divergence-free and $\bsomega$ is fixed. The homotopy error---which is defined as the difference between the smallest eigenvalue $\lambda(\bsomega,t=1)$ of the original operator and that of the homotopy operator in \eqref{eq:homo_eig} satisfies for any $t\in[0, 1]$ 
\begin{equation}
\label{eq:homo-err}
|\lambda(\bsomega,1)-\lambda(\bsomega,t)|
\,\leq\, C_{t, \bsomega} (1-t),
\end{equation}
where
\begin{equation}
\label{eq:C_homo}
C_{t, \bsomega} \coloneqq \frac{\|\bsa(\cdot, \bsomega)\|_{L^\infty}
\big(\|u(\bsomega, 1)\|_V + \|u^*(\bsomega, 1)\|_V\big)}
{\langle u(\bsomega, 1), u^*(\bsomega, t)\rangle + \langle u(\bsomega, t), u^*(\bsomega, 1)\rangle},
\end{equation}
and $u^*(\bsomega,t)$ is the dual homotopy solution.
For $t$ sufficiently close to 1 and almost all $\bsomega \in \Omega$, 
$C_{t, \bsomega} < C_t $
for some $C_t < \infty$ independent of $\bsomega$.
\end{lemma}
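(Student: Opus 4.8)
The plan is to derive \eqref{eq:homo-err} by differentiating the eigenvalue with respect to the homotopy parameter and integrating back up from $t$ to $1$. First I would fix $\bsomega$ and treat $t \mapsto \lambda(\bsomega, t)$ as a smooth, simple branch of eigenvalues of the family $\mathcal{H}(\bsomega, t)$; smoothness and simplicity of the smallest eigenvalue follow from the Krein--Rutman argument in the Proposition above applied to each $\mathcal{H}(\bsomega, t)$ (which is again a convection-diffusion operator with divergence-free convection, now scaled by $t$), together with standard analytic perturbation theory since $t \mapsto \mathcal{H}(\bsomega, t)$ is affine. The key computational step is the Hellmann--Feynman-type formula: writing the weak form $\calA(\bsomega; u(\bsomega,t), v) + t\,\calB(\bsomega; u(\bsomega,t), v) = \lambda(\bsomega,t)\langle u(\bsomega,t), v\rangle$, testing with the dual eigenfunction $u^*(\bsomega,t)$, and differentiating in $t$, the terms involving $\partial_t u$ cancel against the corresponding dual-equation terms (this is exactly where divergence-freeness and the adjoint relation are used), leaving
\[
\frac{\dd}{\dd t}\lambda(\bsomega,t)
\,=\,
\frac{\calB(\bsomega; u(\bsomega,t), u^*(\bsomega,t))}{\langle u(\bsomega,t), u^*(\bsomega,t)\rangle}.
\]

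Next I would bound the numerator using Cauchy--Schwarz and Assumption~\ref{ass:3}:
$|\calB(\bsomega; u(\bsomega,t), u^*(\bsomega,t))| \leq \|\bsa(\cdot,\bsomega)\|_{L^\infty}\|u(\bsomega,t)\|_V\|u^*(\bsomega,t)\|_V$.
To match the exact form of the constant $C_{t,\bsomega}$ in \eqref{eq:C_homo}, however, one should not integrate $\dd\lambda/\dd s$ naively; instead I would follow Carstensen et al.\ and use a \emph{secant}-type identity rather than a pointwise derivative bound. Concretely, subtract the weak form at parameter $t$ (tested against $u^*(\bsomega,1)$) from the weak form at parameter $1$ (tested against $u^*(\bsomega,t)$), exploiting again the cancellation of the diffusion sesquilinear form and the $L^2$ cross terms via the primal/dual pairing; rearranging yields
\[
\big(\lambda(\bsomega,1) - \lambda(\bsomega,t)\big)
\big(\langle u(\bsomega,1), u^*(\bsomega,t)\rangle + \langle u(\bsomega,t), u^*(\bsomega,1)\rangle\big)
\,=\,
(1-t)\,\big(\calB(\bsomega; u(\bsomega,1), u^*(\bsomega,t)) + \calB(\bsomega; u(\bsomega,t), u^*(\bsomega,1))\big),
\]
up to sign bookkeeping. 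Bounding the right-hand side with Cauchy--Schwarz, Assumption~\ref{ass:3}, and $\|u(\bsomega,t)\|_{L^2} = \|u^*(\bsomega,t)\|_{L^2} = 1$ then gives \eqref{eq:homo-err} with precisely the constant $C_{t,\bsomega}$ of \eqref{eq:C_homo}, after noting $\|u(\bsomega,t)\|_V, \|u^*(\bsomega,t)\|_V$ are controlled (the Poincaré constant lets one absorb these or, more carefully, one keeps the dependence explicit as the numerator of $C_{t,\bsomega}$ does).

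For the last sentence — the uniform bound $C_{t,\bsomega} < C_t$ for $t$ close to $1$ — I would argue by continuity and compactness: the numerator of \eqref{eq:C_homo} is uniformly bounded over $\bsomega \in \Omega$ by $\bsa_\mathrm{max}$ times the $H^1$-norm bounds from Theorem~\ref{thm:u-reg} (applied to the $t=1$ problem, noting $|\lambda(\bsomega,1)|$ is bounded on the compact $\Omega$), while the denominator tends to $2\langle u(\bsomega,1), u^*(\bsomega,1)\rangle > 0$ as $t \to 1$, uniformly in $\bsomega$, because the eigenfunctions depend continuously on $t$ and $\bsomega$ jointly and the $t=1$ pairing is bounded away from zero on the compact set $\Omega$ (the primal and dual eigenfunctions of a simple eigenvalue are not orthogonal). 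Hence there is $t_0 < 1$ and $C_t < \infty$ with the claimed bound for all $t \in [t_0, 1]$. \textbf{The main obstacle} I anticipate is the cancellation bookkeeping in the secant identity: one must verify carefully that all diffusion terms and all $L^2$-pairing terms that are \emph{not} of the form appearing in \eqref{eq:C_homo} genuinely cancel, which relies on using the primal equation at one parameter against the dual eigenfunction at the other parameter in a symmetric way, and on the adjoint relation $\lambda(\bsomega,t) = \overline{\lambda^*(\bsomega,t)}$ so that the scalar factors match. A secondary subtlety is justifying analyticity/simplicity of the branch uniformly enough to make the identity valid for all $t \in [0,1]$, which I would handle by invoking the gap Proposition for each $\mathcal{H}(\bsomega,t)$ and standard Kato perturbation theory.
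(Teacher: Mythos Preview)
Your proposal is essentially correct and follows the same route as the paper: derive the secant identity by testing the primal equation at parameter $1$ against $u^*(\bsomega,t)$ and the primal equation at parameter $t$ against $u^*(\bsomega,1)$ (and symmetrically with the dual), then bound the resulting $\calB$-terms and argue by continuity and compactness for the uniform statement.

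One refinement you should make explicit: to obtain \emph{exactly} the numerator $\|\bsa\|_{L^\infty}(\|u(\bsomega,1)\|_V + \|u^*(\bsomega,1)\|_V)$, which involves only the $t=1$ eigenfunctions, a naive Cauchy--Schwarz bound on $\calB(\bsomega; u(\bsomega,t), u^*(\bsomega,1))$ would produce $\|u(\bsomega,t)\|_V$ instead. The paper avoids this by first using the divergence-free identity $\calB(\bsomega; w, v) = -\calB(\bsomega; \overline{v}, \overline{w})$ (integration by parts for the convection term) to flip the gradient onto $u^*(\bsomega,1)$, and only then applying Cauchy--Schwarz together with $\|u(\bsomega,t)\|_{L^2} = 1$. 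Your remark that ``$\|u(\bsomega,t)\|_V$ is controlled'' is unnecessary once this flip is made; this is precisely where divergence-freeness enters the secant estimate, not only in the derivative formula you mention first.
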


\begin{proof}
First, the primal and dual homotopy eigenvalue problems are
\begin{align*}
\calA(\bsomega; u(\bsomega, t), v) + t\calB(u(\bsomega, t), v) &= \lambda(\bsomega, t)
\langle u(\bsomega, t), v\rangle \quad &\text{for all } v \in V,\\
\calA(\bsomega; w, u^*(\bsomega, t)) + t\calB(w, u^*(\bsomega, t)) &= \overline{\lambda^*(\bsomega, t)}
\langle w, u^*(\bsomega, t)\rangle \quad &\text{for all } w \in V,
\end{align*}
where we again normalise the homotopy eigenfunctions such that
$\|u(\bsomega, t)\|_{L^2} = 1 = \|u^*(\bsomega, t)\|_{L^2}$.

Following the proof of \cite[Lemma 4.1]{homo2011}, 
using the homotopy eigenvalue problems we can write the homotopy error as
\begin{align}
\label{eq:homo1}
&\big[\lambda(\bsomega, 1) - \lambda(\bsomega, t)\big]
\big[\langle u(\bsomega, 1), u^*(\bsomega, t)\rangle
+ \langle u(\bsomega, t), u^*(\bsomega, 1)\rangle\big]
\nonumber\\
=& \lambda(\bsomega, 1)\langle u(\bsomega, 1), u^*(\bsomega, t)\rangle
+ \overline{\lambda^*(\bsomega, 1)}\langle u(\bsomega, t), u^*(\bsomega, 1) \rangle
\nonumber\\
&- \overline{\lambda^*(\bsomega, t)}\langle u(\bsomega, 1), u^*(\bsomega, t)\rangle
- \lambda(\bsomega, t)\langle u(\bsomega, t), u^*(\bsomega, 1) \rangle
\nonumber\\
=& (1 - t)\big[\calB(\bsomega; u(\bsomega, 1), u^*(\bsomega, t)) + 
\calB(\bsomega; u(\bsomega, t), u^*(\bsomega, 1))\big],
\end{align}
where we have also used the property $\lambda(\bsomega, t) = \overline{\lambda^*(\bsomega, t)}$.

Since $\bsa(\bsomega)$ is divergence free, we have
\[
\calB(\bsomega; u(\bsomega, t), u^*(\bsomega, 1))
=
-\calB(\bsomega; \overline{u^*(\bsomega, 1)}, \overline{u(\bsomega, t)}).
\]
Then by the triangle inequality, followed by the Cauchy--Schwarz inequality
\begin{align}
\label{eq:homo2} 
&\calB(\bsomega; u(\bsomega, 1), u^*(\bsomega, t)) + 
\calB(\bsomega; u(\bsomega, t), u^*(\bsomega, 1))
\nonumber\\
&= \calB(\bsomega; u(\bsomega, 1), u^*(\bsomega, t)) 
-\calB(\bsomega; \overline{u^*(\bsomega, 1)}, \overline{u(\bsomega, t)})
\nonumber\\
&\leq |\calB(\bsomega; u(\bsomega, 1), u^*(\bsomega, t))|
+|\calB(\bsomega; \overline{u^*(\bsomega, 1)}, \overline{u(\bsomega, t)})|
\nonumber\\
&\leq \|\bsa(\bsomega)\|_{L^\infty} \|\nabla u(\bsomega, 1)\|_{L^2} \|u^*(\bsomega, t)\|_{L^2}
+ \|\bsa(\bsomega)\|_{L^\infty} \|\nabla u^*(\bsomega, 1)\|_{L^2} \|u(\bsomega, t)\|_{L^2}
\nonumber\\
&= \bsa_{\max} \big(\|u(\bsomega, 1)\|_{V} + \|u^*(\bsomega, 1)\|_V\big),
\end{align}
where we have used the property that the homotopy eigenfunctions are normalized
and Assumption~\ref{ass:3}.
Substituting \eqref{eq:homo2} into \eqref{eq:homo1} then rearranging gives the 
result \eqref{eq:homo-err} with $C_{t, \bsomega}$ as in \eqref{eq:C_homo}.

Next, we bound $C_{t, \bsomega}$ independently of $\bsomega$. 
Clearly, the numerator is bounded for all $t$ and almost all $\bsomega$.
Next, we show that the denominator is strictly positive.
Suppose for a contradiction that $\langle u(\bsomega, 1), u^*(\bsomega, t)\rangle = 0$,
then this implies that
\[
\langle u(\bsomega, 1), u^*(\bsomega, 1) - u^*(\bsomega, t)\rangle = 
\langle u(\bsomega, 1), u^*(\bsomega, 1)\rangle > 0,
\]
since the eigenfunction and dual eigenfunction are not orthogonal if the corresponding eigenvalues satisfy $\lambda(\bsomega, 1) = \overline{\lambda^*}(\bsomega, 1)$.
However, since $u^*(\bsomega, t) \to u^*(\bsomega, 1)$ as $t \to 1$,
the left hand side tends to zero whereas the right hand side is strictly positive and independent of $t$,
leading to a contradiction. Hence, for $t$ sufficiently small  $\langle u(\bsomega, 1), u^*(\bsomega, t)\rangle > 0$ and similarly $\langle u(\bsomega, t), u^*(\bsomega, 1)\rangle > 0$.
Thus, for $t$ sufficiently small $C_{t, \bsomega} < \infty$. Since $\bsa(\bsomega)$ along with the primal and dual eigenfunctions are continuous in $\bsomega$, it follows that $C_{t, \bsomega}$ is also continuous in $\bsomega$ and thus, can be bounded by the maximum over the compact domain $\Omega$,
\[
C_{t, \bsomega} \leq \max_{\bsomega \in \Omega} C_{t, \bsomega} \eqqcolon C_t < \infty.
\]
\end{proof}

With the homotopy method, the approximation error now comes from three sources: the FE discretization, the iterative eigensolver, and the value of the homotopy parameter. We suppose again that the error due to the eigensolver is bounded from above by the other two sources of error and design multilevel sequences such that the homotopy error and the discretization error are non-increasing with increasing level. Denoting the homotopy parameter 
and the mesh size at level $\ell$ by $t_\ell$ and $h_\ell$, respectively, the multilevel sequence
$$\{(t_0,h_0),(t_1,h_1),\ldots,(t_L,h_L)\},$$ 
is designed such that $t_{\ell-1} \leq t_\ell$, $h_{\ell-1} \geq h_\ell$, and $t_L=1$. The multilevel parameters are required to be non-repetitive, i.e., $(t_{\ell-1}, h_{\ell-1})\neq(t_\ell,h_\ell)$ for all $\ell = 1, \ldots, L$, to ensure an asymptotically decreasing total approximation error in the sequence. However, one of these two parameters is allowed to be the same on two adjacent levels, i.e., either $h_{\ell-1}=h_\ell$ or $t_{\ell-1}=t_\ell$ is possible. This setting allows for adapting the homotopy parameter to discretisations on different meshes to satisfy the stability condition of the FE approximation.

The resulting MLMC estimator can be derived from the telescoping sum
\[
\mathbb{E}[\lambda(\bsomega)] = \mathbb{E}[\lambda_{h_0}(\bsomega,t_0)]+\sum_{i=1}^L\mathbb{E}[\lambda_{h_i}(\bsomega, t_i)-\lambda_{h_i-1}(\bsomega, t_{i-1})].
\]
Following a similar derivation as that of Corollary \ref{cor:conv} and based on the error bound in Lemma \ref{lem:homotop}, we conjecture that the expectation and the variance of the multilevel difference with the homotopy method are bounded by
\begin{equation}
\label{eq:homotopy}
\begin{aligned}
|\mathbb{E}[\lambda_{h_{\ell}}(\bsomega, t_{\ell})-\lambda_{h_{\ell-1}}(\bsomega, t_{\ell-1})]| &\leq c_1 h_{\ell-1}^2 + c_2 (1-t_{\ell-1}), 
\\
\mathrm{var}[\lambda_{h_{\ell}}(\bsomega, t_{\ell})-\lambda_{h_{\ell-1}}(\bsomega, t_{\ell-1})] &\leq c_3 h_{\ell-1}^4 + c_4 (1-t_{\ell-1})^2,
\end{aligned}
\end{equation}
respectively.
This will be used as the guideline for choosing the multilevel sequences in our numerical experiments. We will also demonstrate that the above conjecture is valid in our numerical experiments. 

\subsection{Multilevel QMC Methods}
QMC methods are a class of equal-weight quadrature rules
originally designed to approximate high-dimensional integrals on the unit hypercube.
A QMC approximation of the expected value of $f$ is given by
\begin{equation}
\label{eq:qmc}
    \bbE[f] \,=\, \int_{[0, 1]^s} f(\bsomega) \, \rd \bsomega \, \approx\, 
    \frac{1}{N} \sum_{k = 1}^{N - 1} f(\bstau_{k}),
\end{equation}
where, in contrast to Monte Carlo methods, the quadrature points $\{\bstau_k\}_{k = 1}^{N - 1} \subset [0, 1]^s$
are chosen deterministically to be well-distributed and have good approximation properties
in high dimensions.
There are several types of QMC methods, including lattice rules, digital nets and randomised rules. The main benefit of QMC methods is that for sufficiently smooth integrands the quadrature 
error converges at a rate of $\mathcal{O}(N^{-1 + \delta})$, $\delta > 0$, or faster, which is 
better than the Monte Carlo convergence rate of $\mathcal{O}(N^{-1/2})$.
For further details see, e.g., \cite{DKS13,DP10}.

In this paper, we consider randomly shifted lattice rules, which are generated by
a single integer vector $\bsz \in \N^s$ and a single random shift 
$\bsDelta \sim \mathrm{Uni}[0, 1]^s$. 
The points are given by
\begin{equation}
\label{eq:lattice}
    \bstau_k \,=\, \bigg\{ \frac{k\bsz}{N} + \bsDelta\bigg\} 
    \quad \text{for } k = 0, 1, \ldots, N - 1,
\end{equation}
where $\{\cdot\}$ denotes taking the fractional part of each component.
The benefits of random shifting are that the resulting approximation \eqref{eq:qmc} is unbiased
and that performing multiple QMC with i.i.d. random shifts provides a practical estimate
for the mean-square error using the sample variance of the multiple approximations.

If $f$ is sufficiently smooth (i.e., has square-integrable mixed first derivatives) then a generating vector can be constructed such that the mean-square error (MSE) of a randomly shifted lattice rule approximation satisfies
\begin{equation}
    \bbE \bigg[\bigg|  \int_{[0, 1]^s} f(\bsomega) \, \rd \bsomega - \frac{1}{N} \sum_{k = 0}^{N - 1} f(\bstau_{k})\bigg|^2\bigg]
    \lesssim N^{-1/\eta}
    \quad \text{for } \eta \in (\tfrac{1}{2}, 1],
\end{equation}
see, e.g., Theorem~5.10 in \cite{DKS13}. I.e., for $\eta \approx 1/2$ the convergence of the MSE is close to $1/N^{2}$.

Starting again with the telescoping sum \eqref{eq:telesum},
a multilevel QMC (MLQMC) method approximates the 
expectation of the smallest eigenvalue by using a QMC rule
to compute the expectation on each level.
MLQMC methods were first introduced in \cite{GilesWater09}
for SDEs,
then applied to parametric PDEs in \cite{KSSSU17,KSS15}
and elliptic eigenvalue problems in  \cite{Gilbert2020,Gilbert2021}.
For $L \in \N$ and $\{N_\ell\}_{\ell = 0}^L$,
the MLQMC approximation is given by
\begin{equation}
\label{eq:mlqmc}
    Y^\mathrm{MLQMC} \coloneqq \sum_{\ell = 0}^L
    Y^\mathrm{QMC}_\ell,\quad
    Y^\mathrm{QMC}_\ell \coloneqq \frac{1}{N_\ell}
    \sum_{\ell = 0}^{N_\ell - 1}
    \big[\lambda_\ell(\bstau_{\ell, k}) - \lambda_{\ell - 1}(\bstau_{\ell, k})\big],
\end{equation}
where we apply a different QMC rule with points $\{\bstau_{\ell, k}\}_{k = 0}^{N_\ell - 1}$ on each level,
e.g., an $N_\ell$-point randomly shifted lattice rule \eqref{eq:lattice} generated by $\bsz_\ell$ and an i.i.d. $\bsDelta_\ell$.

The faster convergence of QMC rules leads to an improved 
complexity of MLQMC methods compared to MLMC, where in the best case the cost is reduced to close to $\varepsilon^{-1}$
for a MSE of $\varepsilon^2$. 
Following \cite{KSSSU17}, under the same assumptions as in Theorem~\ref{thm:mlmc},
but with Assumption \emph{II} replaced by
\begin{itemize}[leftmargin=3em]
\item[\emph{II(b)}]\label{asm:IIb} $\mathrm{MSE}[Y_\ell^\mathrm{QMC}] = O(N_\ell^{-1/\eta} h_\ell^\beta)$ with $\eta \in (\frac{1}{2}, 1]$,
\end{itemize}
the MSE of the MLQMC estimator \eqref{eq:mlqmc} 
is bounded above by $\varepsilon^2$ and the cost satisfies
\begin{align*}
    C_\mathrm{MLQMC}(\varepsilon)
    \lesssim 
    \begin{cases}
        \varepsilon^{-2\eta} & \text{if } \beta\eta  > \gamma,\\
        \varepsilon^{-2\eta}\log_2(\varepsilon^{-1})^{\eta + 1} & \text{if } \beta \eta  = \gamma,\\
        \varepsilon^{-2\eta - (\gamma - \beta \eta)/\alpha} & 
        \text{if } \beta\eta  < \gamma.
    \end{cases}
\end{align*}
The maximum level $L$ is again given by \eqref{eq:L}
and $\{N_\ell\}$ are given by
\begin{equation}
\label{eq:N_ell_mlqmc}
    N_\ell \,=\, \Bigg\lceil N_0
    \bigg(\frac{h_\ell^\beta}{C_\ell}\bigg)^{\eta/(\eta + 1)}
    {C_0}\bigg]^{1/(\eta + 1)}\bigg)^\eta
    \Bigg\rceil\,,
\end{equation}
where $C_\ell$ is the cost per sample as in assumption 
\emph{III} in Theorem~\ref{alg:mlmc} and $N_0$ is chosen as
\begin{equation*}
    N_0 \simeq \varepsilon^{-2\eta} \Bigg(\sum_{\ell = 0}^L \big(h_\ell^{\beta\eta} C_\ell
    \big)^{1/(\eta + 1)}\Bigg)^\eta.
\end{equation*}

Verifying Assumption~\emph{II(b)} for the convection-diffusion EVP \eqref{convdiffeq} requires performing
a technical analysis similar to \cite{Gilbert2020} and in particular, requires bounding the derivatives of the eigenvalue $\lambda(\bsomega)$ and its eigenfunction $u(\bsomega)$ with respect to $\bsomega$.
Such analysis is left for future work.
In the numerical results, section we study the convergence of QMC and observe that \emph{II(b)} holds
with $\eta \approx 0.61$.

In practice, one should perform multiple, say $R \in \N_0$, QMC approximations corresponding to i.i.d. random shifts,
then take the average as the final estimate. In this way, we can also estimate the MSE by the sample variance over the different realisations.


\section{Numerical results}
\label{sec:nums}
In this section, we present numerical results for three test cases. The quantity of interest in all cases is the smallest eigenvalue of the stochastic convection-diffusion problem (\ref{convdiffeq}) in the unit domain $D=[0,1]^2$. The first two test cases use constant convection velocities at different magnitudes to benchmark the performance of eigenvalue solvers and finite element discretisation methods in the multilevel setting. In these two test cases, the random conductivity $\kappa(\mathbf{x};\bsomega)$ is modelled as a log-uniform random field constructed through the convolution of $s_\kappa$ i.i.d. uniform random variables
\[
\log\kappa(\mathbf{x};\bsomega)=\sum_{i=1}^{s_\kappa}\omega_ik(\mathbf{x}-\mathbf{c}_i),
\]
with exponential kernels $k(\mathbf{x}-\mathbf{c}_i)=\exp[-\frac{25}{2}\|\mathbf{x}-\mathbf{c}_i\|_2]$, where $\mathbf{c}_i$ are the kernel centers placed uniformly on a $5\times5$ grid in the domain $D$.
In the third test case, we also make the convection velocity a random field. Specifically, we first construct a log-uniform random field 
\begin{equation}\label{eq:random_vel1}
S(\bsomega,\bsx)=\exp\left[\sum_{i=1}^{s_a}\omega_{i + s_\kappa} k(\mathbf{x}-\mathbf{c}_i)\right],
\end{equation}
similar to that of the conductivity field using additional $s_a$ i.i.d. uniform random variables. Then, a divergence-free velocity field can be obtained by 
\begin{equation}\label{eq:random_vel2}
\bsa(\bsomega)=\left[\frac{\partial S(\bsomega,\bsx)}{\partial x_2}, -\frac{\partial S(\bsomega,\bsx)}{\partial x_1}\right]^\top.
\end{equation}

We employ the Eigen~\cite{Eigen} library for Rayleigh quotient iteration and solve the linear systems using sparse LU decomposition with permutation from the SuiteSparse~\cite{SuiteSparse} library. For the implicitly restarted Arnoldi method, we use the ARPACK~\cite{ARPACK} library with the \texttt{SM} mode for finding the smallest eigenvalue. Random variables are generated using the standard C++ library and the pseudo-random seeds are the same across all experiments.

Numerical experiments are organized as follows. 
For a relatively low convection velocity $\bb{a}=[20;0]^T$, we demonstrate the multilevel Monte Carlo (MLMC) method using the Galerkin FEM discretization. In this case, we also consider applying the homotopy method together with a geometrically refined mesh hierarchy. Then, on a test case with relatively high convection velocity $\bb{a}=[50;0]^T$, we demonstrate the extra efficiency gain offered by the numerically more stable SUPG method, compared with the Galerkin discretization. For the third test case with a random velocity field, we apply SUPG to demonstrate the efficacy and efficiency of our multilevel method. Here we also demonstrate that quasi-Monte Carlo (QMC) samples can be used to replace Monte Carlo samples to further enhance the efficiency of multilevel methods. For all multilevel methods, we consider a sequence of geometrically refined meshes with $h_\ell = h_0 \times 2^{-\ell}, \ell = 0, 1, \ldots, 4$, and $h_0 = 2^{-3}$. At the finest level, this gives 16129 degrees of freedom in the discretised linear system. We use $10^4$ samples on each level $\ell$ to compute the estimates of rate variables $\alpha, \beta, \gamma$ in the MLMC complexity theorem (cf. Theorem \ref{thm:mlmc}).

\subsection{Test case I}

In the first experiment, we set $\bb{a}=[20;0]^T$ and use the Galerkin FEM to discretize the convection-diffusion equation. 
The stopping criteria for the Rayleigh quotient iteration and for the implicitly restarted Arnoldi method are set to be $10^{-12}$. In addition, for the implicitly restarted Arnoldi method, the Krylov subspace dimensions (the \texttt{ncv} values of ARPACK) are chosen empirically for each mesh size to optimize the number of Arnoldi iterations. They are $m=20, 40, 70, 70, 100$ for $h=2^{-3},2^{-4},2^{-5}, 2^{-6},2^{-7}$, respectively.

We demonstrate the efficiency of four variants of the MLMC method: ({\romannumeral 1}) the three-grid Rayleigh quotient iteration (tgRQI) with a model sequence defined by grid refinement; ({\romannumeral 2}) tgRQI with a model sequence defined by grid refinement and homotopy; ({\romannumeral 3}) the implicitly restarted Arnoldi method (IRAr) with a model sequence defined by grid refinement; and ({\romannumeral 4}) IRAr with a model sequence defined by grid refinement and homotopy.

{\bf ({\romannumeral 1}) MLMC with tgRQI:} Figure \ref{figc:rq1} illustrates the mean, the variance and the computational cost of multilevel differences $\lambda_\ell(\bsomega)-\lambda_{\ell-1}(\bsomega)$ of the smallest eigenvalue using tgRQI as the eigenvalue solver (without homotopy). Figure~\ref{figc:mean_rq1} also shows Monte Carlo estimates of the expected mean and variance of the smallest eigenvalue $\lambda_\ell(\bsomega)$ for each of the discretization levels. In addition to the computational cost, Figure \ref{figc:time_rq1} also shows the number of Rayleigh quotient iterations used at each level. We observe that the average number of iterations follows our analysis of the computational cost of tgRQI (cf. Alg.~\ref{alg:threegrid}). 
From these plots, we estimate that the rate variables in the MLMC complexity theorem are $\alpha\approx2.0$, $\beta\approx4.0$ and $\gamma\approx2.41$. 
Since the variance reduction rate $\beta$ is larger than the cost increase rate $\gamma$, the MLMC estimator is in the best case scenario, with $O(\varepsilon^{-2})$ complexity, as stated in Theorem \ref{thm:mlmc}.

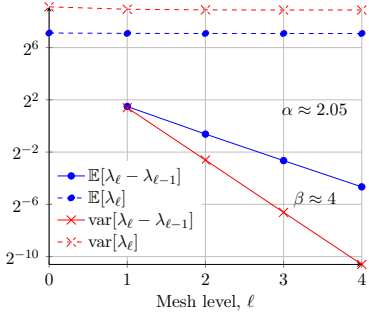
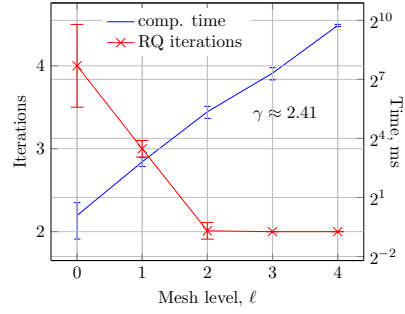
\begin{figure}[h!]
\centering
\begin{subfigure}{0.48\textwidth}
\vspace{0.5ex}
\begin{tikzpicture}[scale=0.6]
\begin{axis}[
    axis lines = left,
    grid = both,
	xlabel = {Mesh level, $\ell$},
    minor tick num=0,
    ylabel near ticks,
	ymode=log,
    log basis y={2},
    every axis y label/.style=
    {at={(-0.25,0.5)},rotate=90},
    y tick label style={
       /pgf/number format/.cd,
           sci,
           precision=100,
       /tikz/.cd,
    },
    xticklabel={%
        \pgfmathtruncatemacro{\IntegerTick}{\tick}%
        \pgfmathprintnumberto[verbatim,fixed,precision=1]{\tick}\tickAdjusted%
        \pgfmathparse{\IntegerTick == \tickAdjusted ? 1: 0}%
        \ifnum\pgfmathresult>0\relax$\IntegerTick$\else\fi%
    },
   legend style={at={(0.25,0.4)}, anchor=north,legend columns=1, draw=none},
   legend cell align={left},
]
\addplot[blue,mark=*,domain=1:1e-30]
	plot[error bars/.cd, y dir = both, y explicit]
	table[x index =0, y index=1]{results/v20/mean.txt};
	\addlegendentry{$\EE{E}[\lambda_\ell-\lambda_{\ell-1}]$}
\addplot[blue,dashed,mark=*,domain=1:1e-30]
	table[x index =0, y index=1]{results/v20/meanl.txt};
	\addlegendentry{$\EE{E}[\lambda_\ell]$}
\addplot[red,mark=x,mark size=4pt,domain=1:1e-30]
	table[x index =0, y index=1]{results/v20/var.txt};
	\addlegendentry{$\mathrm{var}[\lambda_\ell-\lambda_{\ell-1}]$}
\addplot[red,dashed,mark=x,mark size=4pt,domain=1:1e-30]
	table[x index =0, y index=1]{results/v20/varl.txt};
	\addlegendentry{$\mathrm{var}[\lambda_\ell]$}

\node at (rel axis cs:0.85,0.6) {$\alpha\approx 2.05$};
\node at (rel axis cs:0.85,0.25) { $\beta\approx 4$};
\end{axis}
\end{tikzpicture}
		\caption{Means and variances of $\lambda_\ell$ and $\lambda_\ell-\lambda_{\ell-1}$.}
		\label{figc:mean_rq1}
\end{subfigure}
\hfill
\begin{subfigure}{0.48\textwidth}
\begin{tikzpicture}[scale=0.6]
\begin{axis}[
   grid = both,
   xlabel = {Mesh level, $\ell$},
   ylabel = {Time, ms},
   every axis y label/.style={at={(1.1,0.5)},rotate=-90},
   axis y line*=right,
   minor tick num=0,
   ymode=log,
   log basis y={2},
     xticklabel={%
        \pgfmathtruncatemacro{\IntegerTick}{\tick}%
        \pgfmathprintnumberto[verbatim,fixed,precision=3]{\tick}\tickAdjusted%
        \pgfmathparse{\IntegerTick == \tickAdjusted ? 1: 0}%
        \ifnum\pgfmathresult>0\relax$\IntegerTick$\else\fi%
    },
   legend style={at={(0.3,0.65)}, anchor=north,legend columns=1, draw=none},
   legend cell align={left},
]
\addplot[blue,mark=*x
,domain=1:1e-30]
	plot[error bars/.cd, y dir = both, y explicit]
	table[x index =0, y index=1, y error index=2,]{results/v20/time_rq.txt};\label{plot_cpu_rq1}
\node at (rel axis cs:0.75,0.57) {$\gamma\approx 2.41$};
\end{axis}

\begin{axis}[
   grid=both,
   ylabel={Iterations},
   every axis y label/.style={at={(-.1,0.5)},rotate=90},
   axis y line*=left,
   xticklabels={},
   xtick={},   
   legend style={at={(0.4,0.98)}, anchor=north,legend columns=1, draw=none},
   legend cell align={left},
]
\addlegendimage{/pgfplots/refstyle=plot_cpu_rq1}\addlegendentry{comp. time}
\addplot[red,mark=x,mark size=4pt,domain=1:1e-30]
	plot[error bars/.cd, y dir = both, y explicit]
	table[x index =0, y index=1, y error index=2,]{results/v20/it_rq.txt};\addlegendentry{RQ iterations}
\end{axis}
\end{tikzpicture}
\caption{Computational time and average RQI.}
\label{figc:time_rq1}
\end{subfigure}

\caption{MLMC method using tgRQI for Test Case I with $\bb{a}=[20;0]^T$ and Galerkin FEM: (a) Mean (blue) and variance (red) of the eigenvalue $\lambda_\ell$ (dashed) and of $\lambda_\ell-\lambda_{\ell-1}$ (solid); (b) computational times for one multilevel difference (blue) and average number of Rayleigh quotient iterations (red) on each level. 
Where shown, the error bars represent $\pm$ one standard deviation.}
\label{figc:rq1}
\end{figure}

{\bf ({\romannumeral 2}) MLMC with homotopy and tgRQI:} Next, we consider the homotopy method in the MLMC setting together with tgRQI.
We use the conjecture in \eqref{eq:homotopy} to set the homotopy parameters such that $1 - t_\ell = O(h_\ell^2)$, $t_0 = 0$ and $t_L = 1$. For $L = 5$, this results in $t_\ell = \{0, 3/4, 15/16, 63/64, 1\}$. With this choice the eigenproblem on the zeroth level contains no convection term and is thus self-adjoint.
Figure~\ref{figc:h_mean} shows again the means and the variances of the multilevel differences $\lambda_\ell-\lambda_{\ell-1}$ in this setting, together with MC estimates of the expected means and variances of the eigenvalues for each level.
The hierarchy of homotopy parameters is chosen to guarantee good variance reduction for MLMC. Indeed, the variance of the multilevel difference decays smoothly with a rate $\beta\approx 3.65$. The expected mean of the difference, on the other hand, stagnates between $\ell = 1$ and $\ell = 2$. However, this initial stagnation is irrelevant for the MLMC complexity theorem; eventually for $\ell \ge 2$, the estimated means of the multilevel differences decrease again with a rate of $\alpha \approx 2$.
Figure \ref{figc:homo_it1} shows the number of Rayleigh quotient iterations used at each level and the computational cost, which grows with a rate of $\gamma\approx2.56$ here. 
This leads to the same asymptotic complexity for MLMC, since the regime is the same, i.e., $\beta>\gamma$, which is the optimal regime in Theorem \ref{thm:mlmc} with a complexity of $O(\varepsilon^{-2})$.

\begin{figure}[t!]
\centering
\begin{subfigure}{0.49\textwidth}
\begin{tikzpicture}[scale=0.6]
\begin{axis}[
   axis lines = left,
   grid = both,
	xlabel = {Mesh level, $\ell$},
   minor tick num=0,
   ylabel near ticks,
	ymode=log,
log basis y={2},
   every axis y label/.style=
{at={(-0.25,0.5)},rotate=90},
   y tick label style={
       /pgf/number format/.cd,
           sci,
           precision=100,
       /tikz/.cd,
   },
     xticklabel={%
        \pgfmathtruncatemacro{\IntegerTick}{\tick}%
        \pgfmathprintnumberto[verbatim,fixed,precision=1]{\tick}\tickAdjusted%
        \pgfmathparse{\IntegerTick == \tickAdjusted ? 1: 0}%
        \ifnum\pgfmathresult>0\relax$\IntegerTick$\else\fi%
    },
   legend style={at={(0.25,0.4)}, anchor=north,legend columns=1, draw=none},
   legend cell align={left},
]
\addplot[blue,mark=*,domain=1:1e-30]
	plot[error bars/.cd, y dir = both, y explicit]
	table[x index =0, y index=1]{results/v20/homo_mean.txt};
	\addlegendentry{$\EE{E}[\lambda_\ell-\lambda_{\ell-1}]$}
\node at (rel axis cs:0.85,0.42) {$\alpha\approx 2$};
\addplot[blue,dashed,mark=*,domain=1:1e-30]
	table[x index =0, y index=1]{results/v20/homo_meanl.txt};
	\addlegendentry{$\EE{E}[\lambda_\ell]$}
\addplot[red,mark=x,mark size=4pt,domain=1:1e-30]
	table[x index =0, y index=1]{results/v20/homo_var.txt};
	\addlegendentry{$\mathrm{var}[\lambda_\ell-\lambda_{\ell-1}]$}
\node at (rel axis cs:0.75,0.08) {$\beta\approx 3.65$};
\addplot[red,dashed,mark=x,mark size=4pt,domain=1:1e-30]
	table[x index =0, y index=1]{results/v20/homo_varl.txt};
	\addlegendentry{$\mathrm{var}[\lambda_\ell]$}
\end{axis}
\end{tikzpicture}
\caption{Means and variances of $\lambda_\ell$ and $\lambda_\ell-\lambda_{\ell-1}$.}
\label{figc:h_mean}
\end{subfigure}
\hfill
\begin{subfigure}{0.5\textwidth}
\begin{tikzpicture}[scale=0.6]
\begin{axis}[
   grid = both,
   xlabel = {Mesh level, $\ell$},
   ylabel = {Time, ms},
   every axis y label/.style={at={(1.1,0.5)},rotate=-90},
   axis y line*=right,
   minor tick num=0,
   ymode=log,
   log basis y={2},
     xticklabel={%
        \pgfmathtruncatemacro{\IntegerTick}{\tick}%
        \pgfmathprintnumberto[verbatim,fixed,precision=3]{\tick}\tickAdjusted%
        \pgfmathparse{\IntegerTick == \tickAdjusted ? 1: 0}%
        \ifnum\pgfmathresult>0\relax$\IntegerTick$\else\fi%
    },
   legend style={at={(0.3,0.65)}, anchor=north,legend columns=1, draw=none},
   legend cell align={left},
]
\addplot[blue,mark=*x
,domain=1:1e-30]
	plot[error bars/.cd, y dir = both, y explicit]
	table[x index =0, y index=1, y error index=2,]{results/v20/homo_time.txt};\label{figc:homo_time1}
\end{axis}

\begin{axis}[
   grid=both,
   ylabel={Iterations},
   every axis y label/.style={at={(-.1,0.5)},rotate=90},
   axis y line*=left,
   xticklabels={},
   xtick={},   
   ytick={2,3,4},
   legend style={at={(0.4,0.98)}, anchor=north,legend columns=1, draw=none},
   legend cell align={left},
]
\addlegendimage{/pgfplots/refstyle=plot_cpu}\addlegendentry{comp. time}
\addplot[red,mark=x,mark size=4pt,domain=1:1e-30]
	plot[error bars/.cd, y dir = both, y explicit]
	table[x index =0, y index=1, y error index=2,]{results/v20/homo_it.txt};\addlegendentry{RQ iterations}
\node at (rel axis cs:0.62,0.7) {$\gamma\approx 2.56$};
\end{axis}
\end{tikzpicture}
		\caption{Computational times and average RQI.}
		\label{figc:homo_it1}
\end{subfigure}
\caption{MLMC method using homotopy and tgRQI for Test Case I with $\bb{a}=[20;0]^T$ and Galerkin FEM: (a) Mean (blue) and variance (red) of the eigenvalue $\lambda_\ell$ (dashed) and of $\lambda_\ell-\lambda_{\ell-1}$ (solid); (b) computational times for one multilevel difference (blue) and average number of RQIs (red) on each level. Where shown, the error bars represent $\pm$ one standard deviation.}
\label{figc:homo_mlmc1}
\end{figure}

{\bf ({\romannumeral 3}) MLMC with IRAr:} Similar results are obtained by using the implicitly restarted Arnoldi eigenvalue solver (without homotopy). Since the mean and the variance of the multilevel differences in this setting are almost identical to those of the Rayleigh quotient solver, we omit the plots here and only report the computational cost. Figure~\ref{figc:ARPACK_nohomotop} shows the average number of matrix-vector products and the estimated CPU time for computing each of the multilevel differences, which grows with a rate of $\gamma \approx 3.5$. Here, the increasing dimension of Krylov subspaces with grid refinement likely causes the higher growth rate of computational time compared to the experiment using tgRQI. Nonetheless, the MLMC estimator has again the optimal $O(\varepsilon^{-2})$ complexity.

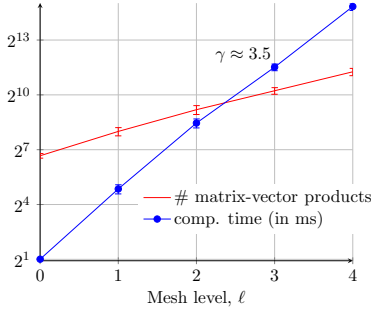
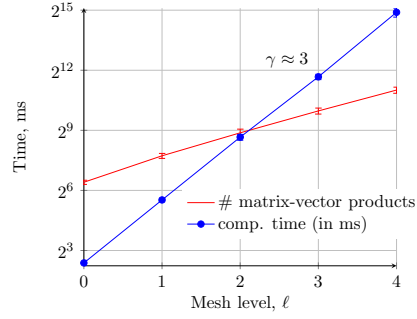
\begin{figure}[t!]
\centering
\begin{subfigure}{0.48\textwidth}
\centering
\begin{tikzpicture}[scale=0.6]
\begin{axis}[
    axis lines = left,
    grid = both,
    xlabel = {Mesh level, $\ell$},
    minor tick num=0,
    ylabel near ticks,
	ymode=log,
    log basis y={2},
    every axis y label/.style=
    {at={(-0.2,0.5)},rotate=90},
        y tick label style={
        /pgf/number format/.cd,
        fixed,
        sci,
        precision=7,
       /tikz/.cd,
     },
     xticklabel={%
        \pgfmathtruncatemacro{\IntegerTick}{\tick}%
        \pgfmathprintnumberto[verbatim,fixed,precision=3]{\tick}\tickAdjusted%
        \pgfmathparse{\IntegerTick == \tickAdjusted ? 1: 0}%
        \ifnum\pgfmathresult>0\relax$\IntegerTick$\else\fi%
    },
   legend style={at={(0.7,0.3)}, anchor=north,legend columns=1, draw=none},
   legend cell align={left},
]
\addplot[red,mark=*x
,domain=1:1e-30]
	plot[error bars/.cd, y dir = both, y explicit]
	table[x index =0, y index=1, y error index=2,]{results/v20/Ax.txt};\addlegendentry{\# matrix-vector products}
 
\addplot[blue,mark=*,domain=1:1e-30]
	plot[error bars/.cd, y dir = both, y explicit]
	table[x index =0, y index=1, y error index=2,]{results/v20/time.txt}; \addlegendentry{comp. time (in ms)}
\node at (rel axis cs:0.65,0.8) {$\gamma\approx 3.5$};
\end{axis}
\end{tikzpicture}
\caption{Without homotopy.}
		\label{figc:ARPACK_nohomotop}
\end{subfigure}
\hfill
\begin{subfigure}{0.48\textwidth}
\begin{tikzpicture}[scale=0.6]
\begin{axis}[
   axis lines = left,
   grid = both,
	xlabel = {Mesh level, $\ell$},
	ylabel = {Time, ms},
   minor tick num=0,
   ylabel near ticks,
	ymode=log,
log basis y={2},
   every axis y label/.style=
{at={(-0.2,0.5)},rotate=90},
   y tick label style={
       /pgf/number format/.cd,
          fixed,
           sci,
           precision=7,
       /tikz/.cd,
   },
     xticklabel={%
        \pgfmathtruncatemacro{\IntegerTick}{\tick}%
        \pgfmathprintnumberto[verbatim,fixed,precision=3]{\tick}\tickAdjusted%
        \pgfmathparse{\IntegerTick == \tickAdjusted ? 1: 0}%
        \ifnum\pgfmathresult>0\relax$\IntegerTick$\else\fi%
    },
   legend style={at={(0.7,0.3)}, anchor=north,legend columns=1, draw=none},
   legend cell align={left},
]

\addplot[red,mark=*x
,domain=1:1e-30]
	plot[error bars/.cd, y dir = both, y explicit]
	table[x index =0, y index=1, y error index=2,]{results/v20/Ax_homo.txt};\addlegendentry{\# matrix-vector products}
 
\addplot[blue,mark=*,domain=1:1e-30]
	plot[error bars/.cd, y dir = both, y explicit]
	table[x index =0, y index=1, y error index=2,]{results/v20/time_homo.txt};\addlegendentry{comp. time (in ms)}
\node at (rel axis cs:0.65,0.8) {$\gamma\approx 3$};
\end{axis}
\end{tikzpicture}
\caption{With homotopy.}
		\label{figc:ARPACK_homotop}
\end{subfigure}
\caption{MLMC method using IRAr for Test Case I with $\bb{a}=[20;0]^T$ and Galerkin FEM, both without (a) and with (b) homotopy: average computational cost (blue) and average number of matrix-vector products (red) per sample of $\lambda_\ell-\lambda_{\ell-1}$. The error bars represent $\pm$ one standard deviation.}
\label{figc:ARPACK}
\end{figure}

{\bf ({\romannumeral 4}) MLMC with homotopy and IRAr:} Finally, we consider the behaviour of IRAr
with homotopy, using the same sequence for the homotopy parameter $t_\ell$ as in ({\romannumeral 2}). Again, 
we only focus on computational cost, 
showing the average number of matrix-vector products and the CPU time for computing each of the multilevel differences in Figure \ref{figc:ARPACK_homotop}. As in ({\romannumeral 2}), the cost grows at a rate of $\gamma \approx 3$ leading again to the optimal $O(\varepsilon^{-2})$ complexity for MLMC.

\begin{figure}[t!]
\centering
\begin{tikzpicture}[scale=0.65]
\begin{axis}[
   axis lines = left,
   grid = both,
   xlabel = root mean square error,
	ylabel = {CPU time},
   minor tick num=0,
   ylabel near ticks,
	ymode=log,
	xmode=log,
log basis y={2},
	log basis x={2},
   every axis y label/.style=
{at={(-0.2,0.5)},rotate=90},
   y tick label style={
       /pgf/number format/.cd,
           sci,
           precision=100,
       /tikz/.cd,
   },
   legend style={at={(1.1,1.25)}, anchor=north,legend columns=1, draw=none},
   legend cell align={left},
]
\addplot plot[mark=*,blue,domain=1:1e-30,dashed]
        file {results/v20/mse_ar_mc.txt};
	\addlegendentry{Standard Monte Carlo with Arnoldi method}
\addplot plot[mark=*,red,domain=1:1e-30,dashed]
        file {results/v20/mse_rq_mc.txt};
\addlegendentry{Standard Monte Carlo with Rayleigh quotient}
\addplot plot[mark=square*,blue,domain=1:1e-30]
        file {results/v20/mse_homo_ar.txt};
	\addlegendentry{MLMC with Arnoldi and homotopy}
\addplot plot[mark=square*,red,domain=1:1e-30]
        file {results/v20/mse_homo_rq.txt};
	\addlegendentry{MLMC with Rayleigh quotient and homotopy}
\addplot plot[mark=triangle*,red,domain=1:1e-30]
        file {results/v20/mse_rq.txt};
	\addlegendentry{MLMC with Rayleigh quotient but without homotopy}
\addplot plot[mark=triangle*,blue,domain=1:1e-30]
        file {results/v20/mse_ar.txt};
	\addlegendentry{MLMC with Arnoldi but without homotopy}

\end{axis}
\end{tikzpicture}
\caption{CPU time vs. root mean square error of all estimators in Test Case I.}
\label{fig:cpu_mse_v20}
\end{figure}
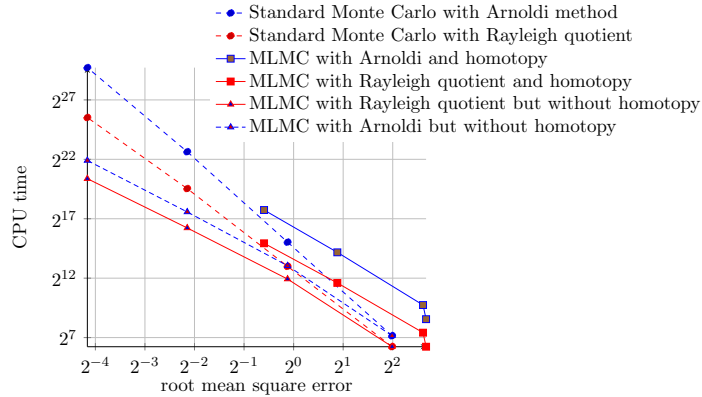

{\bf Overall comparison:} In Figure~\ref{fig:cpu_mse_v20}, we show the CPU time versus the root mean square error for all four presented MLMC estimators together, as well as for standard Monte Carlo estimators using tgRQI (red) and IRAr (blue). The estimated complexity of standard Monte Carlo methods are $O(\varepsilon^{-2.92})$ and $O(\varepsilon^{-3.35})$ for tgRQI and IRAr, respectively. Overall, MLMC using tgRQI (without homotopy) outperforms all other methods, despite that all four MLMC methods achieve the optimal $O(\varepsilon^{-2})$ complexity.

\subsection{Test case II}
\label{sec:problem2}
For the second experiment, we increase the velocity to $\bb{a}=[50;0]^T$ and focus on the comparison between Galerkin and SUPG discretizations. Thus, we only consider the three-grid Rayleigh quotient iteration (tgRQI) with a multilevel sequence based on geometrically refined grids without homotopy. Note that for such a strong convection, five steps in the homotopy approach are insufficient: the eigenvalues for consecutive homotopy parameters are too different to achieve variance reduction in the homotopy-based MLMC method. Its computational complexity is almost the same as the complexity of standard Monte Carlo, namely almost $O(\varepsilon^{-3.5})$. The performance of MLMC with implicitly restarted Arnoldi on the other hand is similar to MLMC with tgRQI.

\begin{figure}[t!]
\centering
\begin{subfigure}{0.49\textwidth}
\centering
\vspace{1.4mm}
\begin{tikzpicture}[scale=0.6]
\begin{axis}[
   axis lines = left,
   grid = both,
	xlabel = {Mesh level, $\ell$},
   minor tick num=0,
   ylabel near ticks,
	ymode=log,
log basis y={2},
   every axis y label/.style=
{at={(-0.25,0.5)},rotate=90},
   y tick label style={
       /pgf/number format/.cd,
           sci,
           precision=100,
       /tikz/.cd,
   },
     xticklabel={%
        \pgfmathtruncatemacro{\IntegerTick}{\tick}%
        \pgfmathprintnumberto[verbatim,fixed,precision=1]{\tick}\tickAdjusted%
        \pgfmathparse{\IntegerTick == \tickAdjusted ? 1: 0}%
        \ifnum\pgfmathresult>0\relax$\IntegerTick$\else\fi%
    },
   legend style={at={(0.25,0.4)}, anchor=north,legend columns=1, draw=none},
   legend cell align={left},
]
\addplot[blue,mark=*,domain=1:1e-30]
	plot[error bars/.cd, y dir = both, y explicit]
	table[x index =0, y index=1]{results/v50/rq_mean.txt};
	\addlegendentry{$\EE{E}[\lambda_\ell-\lambda_{\ell-1}]$}
\addplot[blue,dashed,mark=*,domain=1:1e-30]
	table[x index =0, y index=1]{results/v50/rq_meanl.txt};
	\addlegendentry{$\EE{E}[\lambda_\ell]$}
\addplot[red,mark=x,mark size=4pt,domain=1:1e-30]
	table[x index =0, y index=1]{results/v50/rq_var.txt};
	\addlegendentry{$\mathrm{var}[\lambda_\ell-\lambda_{\ell-1}]$}
\addplot[red,dashed,mark=x,mark size=4pt,domain=1:1e-30]
	table[x index =0, y index=1]{results/v50/rq_varl.txt};
	\addlegendentry{$\mathrm{var}[\lambda_\ell]$}
\end{axis}
\end{tikzpicture}
\caption{Means and variances of $\lambda_\ell$ and $\lambda_\ell-\lambda_{\ell-1}$.}
\label{figc:rq_mean2}
\end{subfigure}
\hfill
\begin{subfigure}{0.49\textwidth}
\centering
\begin{tikzpicture}[scale=0.6]
\begin{axis}[
   grid = both,
   xlabel = {Mesh level, $\ell$},
   ylabel = {Time, ms},
   every axis y label/.style={at={(1.1,0.5)},rotate=-90},
   axis y line*=right,
   minor tick num=0,
   ymode=log,
   log basis y={2},
     xticklabel={%
        \pgfmathtruncatemacro{\IntegerTick}{\tick}%
        \pgfmathprintnumberto[verbatim,fixed,precision=3]{\tick}\tickAdjusted%
        \pgfmathparse{\IntegerTick == \tickAdjusted ? 1: 0}%
        \ifnum\pgfmathresult>0\relax$\IntegerTick$\else\fi%
    },
   legend style={at={(0.3,0.7)}, anchor=north,legend columns=1, draw=none},
   legend cell align={left},
]
\addplot[blue,mark=*x
,domain=1:1e-30]
	plot[error bars/.cd, y dir = both, y explicit]
	table[x index =0, y index=1, y error index=2,]{results/v50/rq_time.txt};\label{plot_cpu}
 \node at (rel axis cs:0.62,0.7) {$\gamma\approx 1.88$};
\end{axis}

\begin{axis}[
   grid=both,
   ylabel={Iterations},
   every axis y label/.style={at={(-.1,0.5)},rotate=90},
   axis y line*=left,
   xticklabels={},
   xtick={},   
   legend style={at={(0.4,0.98)}, anchor=north,legend columns=1, draw=none},
   legend cell align={left},
]
\addlegendimage{/pgfplots/refstyle=plot_cpu}\addlegendentry{comp. time}
\addplot[red,mark=x,mark size=4pt,domain=1:1e-30]
	plot[error bars/.cd, y dir = both, y explicit]
	table[x index =0, y index=1, y error index=2,]{results/v50/rq_it.txt};\addlegendentry{RQ iterations}
\end{axis}
\end{tikzpicture}
		\caption{Computational times and average RQI.}
		\label{figc:time_rq2}
\end{subfigure}
\caption{MLMC method using tgRQI for Test Case II with $\bb{a}=[50;0]^T$ and Galerkin FEM: (a) Mean (blue) and variance (red) of the eigenvalue $\lambda_\ell$ (dashed) and of $\lambda_\ell-\lambda_{\ell-1}$ (solid); (b) computational time for one multilevel difference (blue) and average number of Rayleigh quotient iterations (red) on each level. Where shown, the error bars represent $\pm$ one standard deviation.}
\label{figc:rq_mlmc2}
\end{figure}
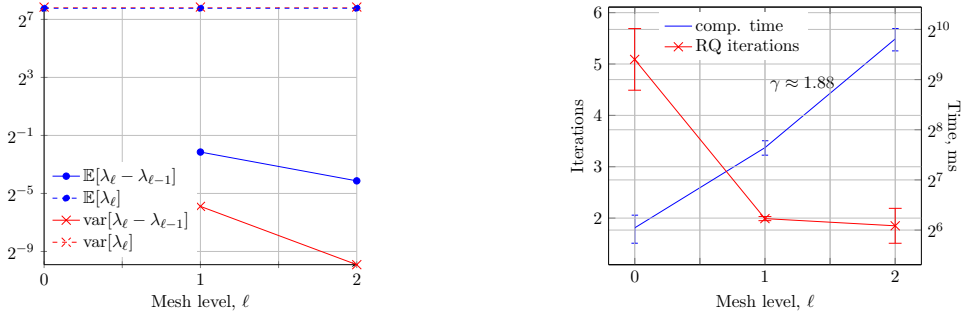

{\bf Galerkin:} Due to the higher convection velocity the first two levels are unstable for most of the realizations of $\bsomega$ as the FEM solution may exhibit non-physical oscillations. Thus, we set the coarsest level for the MLMC method to $h_0=2^{-5}$ here. Keeping the same finest grid level $h_L=2^{-7}$, this means that we only use a total of three levels ($L=2$) compared to the sequence in Test Case~I, which had a total of five levels ($L=4$).
Figure \ref{figc:rq_mean2} shows the expectation and variance of the multilevel differences. Here, we only have a couple of data points for estimating the rate variables of the MLMC complexity theorem, but the estimates are $\alpha \approx 2$ and $\beta \approx 4$ as expected theoretically. 
The average number of Rayleigh quotient iterations in
Figure \ref{figc:time_rq2} also behaves as in Test Case I with $5$ iterations on the coarsest level and $2$ iterations on the subsequent levels
as expected for the three-grid Rayleigh quotient iteration (Alg.~\ref{alg:threegrid}) -- recall that Levels 1 and 2 here correspond to Levels 3 and 4 in Figures \ref{figc:time_rq1} and \ref{figc:homo_it1}. The estimated value for $\gamma \approx 1.88$, and thus the MLMC complexity is still $O(\varepsilon^{-2})$. However, we cannot use as many levels due the numerical stability issues caused by the higher convection velocity, which substantially increases the prefactor in the $O(\varepsilon^{-2})$ cost of the algorithm. 

{\bf SUPG:} By using the SUPG discretization, we overcome the numerical stability issue and can use all five levels in MLMC, starting with $h_0 = 2^{-3}$. As can be seen in Figure~\ref{figc:supg_mean}, the expectation and the variance of the multilevel differences converge with the same rates as for the Galerkin FEM, namely $\alpha \approx 2$ and $\beta \approx 4$ respectively. Also, clearly the use of SUPG leads to stable estimates even on the coarser levels. Figure~\ref{figc:supg_time} reports the average number of Rayleigh quotient iterations used at each level and the computational cost. We estimate that the computational cost increases at a rate of $\gamma\approx2.33$ here. In any case, the use of SUPG in the MLMC  also results in the optimal $O(\varepsilon^{-2})$ complexity. 

\begin{figure}[t!]
\centering
\begin{subfigure}{0.49\textwidth}
\begin{tikzpicture}[scale=0.6]
\begin{axis}[
   axis lines = left,
   grid = both,
	xlabel = {Mesh level, $\ell$},
   minor tick num=0,
   ylabel near ticks,
	ymode=log,
log basis y={2},
   every axis y label/.style=
{at={(-0.25,0.5)},rotate=90},
   y tick label style={
       /pgf/number format/.cd,
           sci,
           precision=100,
       /tikz/.cd,
   },
     xticklabel={%
        \pgfmathtruncatemacro{\IntegerTick}{\tick}%
        \pgfmathprintnumberto[verbatim,fixed,precision=1]{\tick}\tickAdjusted%
        \pgfmathparse{\IntegerTick == \tickAdjusted ? 1: 0}%
        \ifnum\pgfmathresult>0\relax$\IntegerTick$\else\fi%
    },
   legend style={at={(0.25,0.4)}, anchor=north,legend columns=1, draw=none},
   legend cell align={left},
]
\addplot[blue,mark=*,domain=1:1e-30]
	plot[error bars/.cd, y dir = both, y explicit]
	table[x index =0, y index=1]{results/v50/supg_mean.txt};
	\addlegendentry{$\EE{E}[\lambda_\ell-\lambda_{\ell-1}]$}
\node at (rel axis cs:0.65,0.55) {$\alpha\approx 2$};
\addplot[blue,dashed,mark=*,domain=1:1e-30]
	table[x index =0, y index=1]{results/v50/supg_meanl.txt};
	\addlegendentry{$\EE{E}[\lambda_\ell]$}
\addplot[red,mark=x,mark size=4pt,domain=1:1e-30]
	table[x index =0, y index=1]{results/v50/supg_var.txt};
	\addlegendentry{$\mathrm{var}[\lambda_\ell-\lambda_{\ell-1}]$}
\node at (rel axis cs:0.65,0.2) {$\beta\approx 4$};
\addplot[red,dashed,mark=x,mark size=4pt,domain=1:1e-30]
	table[x index =0, y index=1]{results/v50/supg_varl.txt};
	\addlegendentry{$\mathrm{var}[\lambda_\ell]$}
\end{axis}
\end{tikzpicture}
		\caption{Means and variances of $\lambda_\ell$ and $\lambda_\ell-\lambda_{\ell-1}$.}
		\label{figc:supg_mean}
\end{subfigure}
\hfill
\begin{subfigure}{0.49\textwidth}
\begin{tikzpicture}[scale=0.6]
\begin{axis}[
   grid = both,
   xlabel = {Mesh level, $\ell$},
   ylabel = {Time, ms},
   every axis y label/.style={at={(1.1,0.5)},rotate=-90},
   axis y line*=right,
   minor tick num=0,
   ymode=log,
   log basis y={2},
     xticklabel={%
        \pgfmathtruncatemacro{\IntegerTick}{\tick}%
        \pgfmathprintnumberto[verbatim,fixed,precision=3]{\tick}\tickAdjusted%
        \pgfmathparse{\IntegerTick == \tickAdjusted ? 1: 0}%
        \ifnum\pgfmathresult>0\relax$\IntegerTick$\else\fi%
    },
   legend style={at={(0.3,0.7)}, anchor=north,legend columns=1, draw=none},
   legend cell align={left},
]
\addplot[blue,mark=*x
,domain=1:1e-30]
	plot[error bars/.cd, y dir = both, y explicit]
	table[x index =0, y index=1, y error index=2,]{results/v50/supg_time.txt};\label{figc:supg_time1}
\end{axis}

\begin{axis}[
   grid=both,
   ylabel={Iterations},
   every axis y label/.style={at={(-.1,0.5)},rotate=90},
   axis y line*=left,
   xticklabels={},
   xtick={},   
   legend style={at={(0.4,0.98)}, anchor=north,legend columns=1, draw=none},
   legend cell align={left},
]
\addlegendimage{/pgfplots/refstyle=plot_cpu}\addlegendentry{comp. time}
\addplot[red,mark=x,mark size=4pt,domain=1:1e-30]
	plot[error bars/.cd, y dir = both, y explicit]
	table[x index =0, y index=1, y error index=2,]{results/v50/supg_it.txt};\addlegendentry{RQ iterations}
\node at (rel axis cs:0.65,0.7) {$\gamma\approx 2.23$};
\end{axis}
\end{tikzpicture}
		\caption{Computational times and average RQI.}
		\label{figc:supg_time}
\end{subfigure}
\caption{MLMC method using tgRQI for Test Case II with $\bb{a}=[50;0]^T$ and SUPG discretization: (a) Mean (blue) and variance (red) of the eigenvalue $\lambda_\ell$ (dashed) and of $\lambda_\ell-\lambda_{\ell-1}$ (solid); (b) computational time for one multilevel difference (blue) and average number of Rayleigh quotient iterations (red) on each level. Where shown, the error bars represent $\pm$ one standard deviation.}
\label{figc:supg_mlmc}
\end{figure}
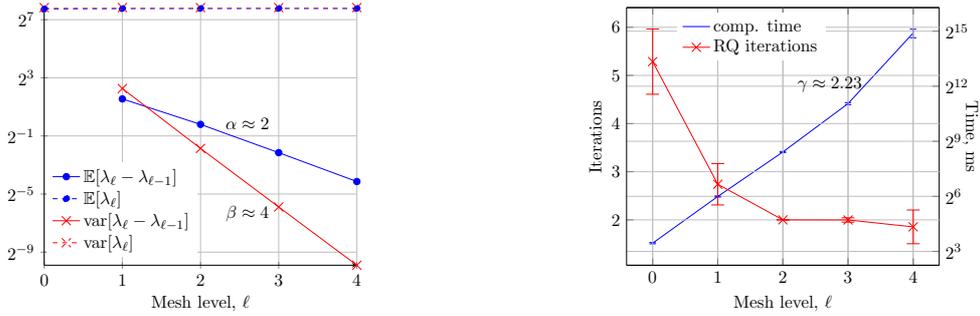

\begin{figure}[t!]
\centering
\begin{tikzpicture}[scale=0.65]
\begin{axis}[
   axis lines = left,
   grid = both,
   xlabel = root mean square error,
	ylabel = {CPU time},
   minor tick num=0,
	ymode=log,
	xmode=log,
log basis y={2},
	log basis x={2},
   every axis y label/.style=
{at={(-0.2,0.5)},rotate=90},
   y tick label style={
       /pgf/number format/.cd,
           fixed,
           sci,
           precision=7,
       /tikz/.cd,
   },
   legend style={at={(1.1,1.05)}, anchor=north,legend columns=1, draw=none},
   legend cell align={left},
]
\addplot plot[mark=*,red,domain=1:1e-30]
        file {results/v50/mse_mc.txt};
	\addlegendentry{Standard Monte Carlo with Rayleigh quotient and Galerkin}
\addplot plot[mark=triangle*,red,domain=1:1e-30]
        file {results/v50/mse.txt};
	\addlegendentry{MLMC with Rayleigh quotient and Galerkin}
\addplot plot[mark=*,black,domain=1:1e-30]
        file {results/v50/mse_supg.txt};
	\addlegendentry{MLMC with Rayleigh quotient and SUPG}

\end{axis}
\end{tikzpicture}
 \caption{CPU time vs.~root mean square error of the estimators in Test Case II.}
\label{fig:cpu_supg}
\end{figure}
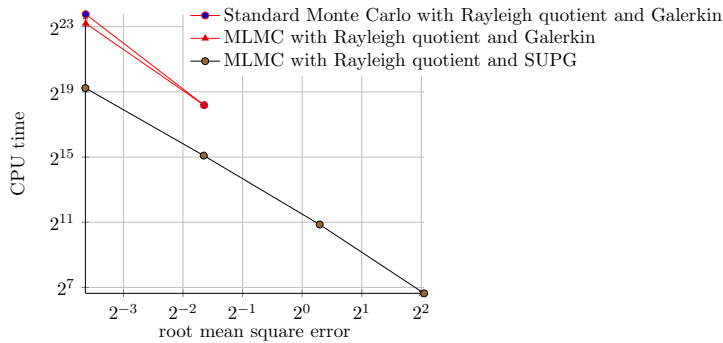

{\bf Overall comparison:} Figure~\ref{fig:cpu_supg} shows CPU times versus root mean square errors for the MLMC methods (with tgRQI and without homotopy) using Galerkin FEM and SUPG discretizations. They are compared to a standard Monte Carlo method with Galerkin FEM. Although both MLMC estimates have the optimal  $O(\varepsilon^{-2})$ complexity, the stability offered by SUPG enables us to use more, coarser levels, thus leading to a smaller prefactor and a significant computational gain of a factor 10-20 over the Galerkin FEM based method.  

\subsection{Test Case III}
In this experiment, the convection velocity becomes a divergence-free random field generated using \eqref{eq:random_vel1} and \eqref{eq:random_vel2}. We discretise the eigenvalue problem using SUPG and apply the three-grid Rayleigh quotient iteration (tgRQI) without homotopy to solve multilevel eigenvalue problems. The stopping criteria for tgRQI is set to be $10^{-12}$. The same sequence of grid refinements, $h=2^{-3},2^{-4},2^{-5}, 2^{-6},2^{-7}$, as in previous test cases is used to construct multilevel estimators.

{\bf MLMC:} Figure \ref{figc:v_mlmc3} illustrates the mean, the variance and the computational cost of multilevel differences $\lambda_\ell(\bsomega)-\lambda_{\ell-1}(\bsomega)$ of the smallest eigenvalue using tgRQI as the eigenvalue solver. Figure~\ref{figc:h_mean_v} also shows Monte Carlo estimates of the expected mean and variance of the smallest eigenvalue $\lambda_\ell(\bsomega)$ for each of the discretization levels. In addition to the computational cost, Figure \ref{figc:v_it1} also shows the number of Rayleigh quotient iterations used at each level. We observe that the average number of iterations follows our analysis of the computational cost of tgRQI (cf. Alg.~\ref{alg:threegrid}). 
From these plots, we estimate that the rate variables in the MLMC complexity theorem are $\alpha\approx2.0$, $\beta\approx4$ and $\gamma\approx2.23$. Since the variance reduction rate $\beta$ is larger than the cost increase rate $\gamma$, the MLMC estimator is in the best case scenario, with $O(\varepsilon^{-2})$ complexity, as stated in Theorem \ref{thm:mlmc}. In Figure~\ref{fig:cpu_mse_v_random}, we compare the computational complexity of MLMC to that of the standard Monte Carlo. Numerically, we observe that the CPU time of MLMC is approximately $O(\varepsilon^{-2.06})$, which is close to the theoretically predicted rate. In comparison, the CPU time of the standard MC is approximately $O(\varepsilon^{-3.2})$ in this test case.

{\bf MLQMC:} All QMC computations were implemented using Dirk Nuyens' code accompanying
\cite{KN16} and use a randomly shifted embedded lattice rule in base 2, as outlined in \cite{CKN06},
with $32$ i.i.d.~random shifts.
In Figure~\ref{fig:qmc_error}, we plot convergence of the MSE for both MC and QMC for three different cases: for $\lambda_0$ in plot (a),
for the difference $\lambda_1 - \lambda_0$ in plot (b), 
and for the difference $\lambda_2 - \lambda_1$ in plot (c).
Here the meshwidths are given by $h_0 = 2^{-3}$, $h_1 = 2^{-4}$ and $h_2 = 2^{-5}$.
In all cases, QMC outperforms MC, where for $\lambda_0$ the MSE for QMC converges at an observed rate
of $-1.78$, whereas MC converges with the rate $-1$.
For the other two cases, which are MSEs of multilevel differences, the QMC converges with an approximate rate of $-1.63$, which is again clearly faster than the MC convergence rate of $-1$.
This observed MSE convergence for the QMC approximations of the differences implies that \emph{II(b)} holds with $\eta \approx 0.61$.
For MLQMC, to choose $N_\ell$ we use \eqref{eq:N_ell_mlqmc} with $\eta \approx 0.61$ and with 
$N_0$ scaled such that the overall MSE is less than $\varepsilon^2/\sqrt{2}$ for each tolerance $\varepsilon$.
Since we use a base-2 lattice rule, we round up $N_\ell$ to the next power of 2.

The MLQMC complexity, in terms of CPU time, is plotted in Figure~\ref{fig:cpu_mse_v_random}, along with the 
results for MC and MLMC. Comparing the three methods in Figure~\ref{fig:cpu_mse_v_random}, clearly MLQMC provides the best complexity, followed by MLMC then standard MC.
In this case, we have the approximate rates $\beta \eta \approx 4 \times 0.61 = 2.44 > \gamma \approx 2.23 $, which implies that for MLQMC we are in the optimal regime for the cost with 
$C_\mathrm{MLQMC}(\varepsilon) \lesssim \varepsilon^{-2\eta}$.
Numerically, we observe that the rate is given by $1.28$, which is very close to the theoretically predicted rate of $2\eta \approx 1.22$.

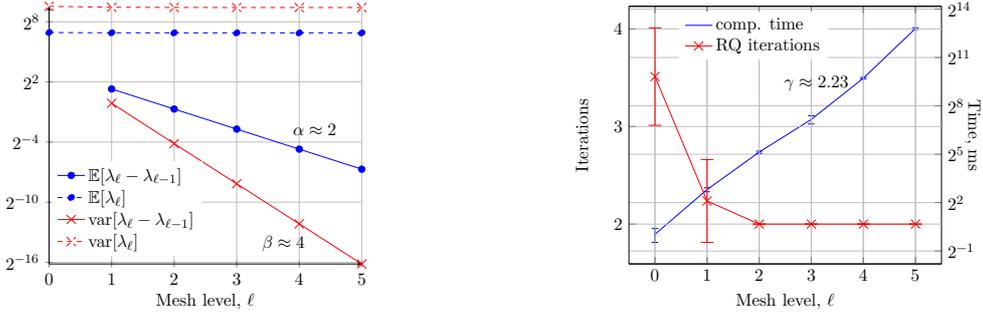
\begin{figure}[t!]
\centering
\begin{subfigure}{0.49\textwidth}
\begin{tikzpicture}[scale=0.6]
\begin{axis}[
   axis lines = left,
   grid = both,
	xlabel = {Mesh level, $\ell$},
   minor tick num=0,
   ylabel near ticks,
	ymode=log,
log basis y={2},
   every axis y label/.style=
{at={(-0.25,0.5)},rotate=90},
   y tick label style={
       /pgf/number format/.cd,
           sci,
           precision=100,
       /tikz/.cd,
   },
     xticklabel={%
        \pgfmathtruncatemacro{\IntegerTick}{\tick}%
        \pgfmathprintnumberto[verbatim,fixed,precision=1]{\tick}\tickAdjusted%
        \pgfmathparse{\IntegerTick == \tickAdjusted ? 1: 0}%
        \ifnum\pgfmathresult>0\relax$\IntegerTick$\else\fi%
    },
   legend style={at={(0.25,0.4)}, anchor=north,legend columns=1, draw=none},
   legend cell align={left},
]
\addplot[blue,mark=*,domain=1:1e-30]
	plot[error bars/.cd, y dir = both, y explicit]
	table[x index =0, y index=1]{results/eg3/meanl.txt};
	\addlegendentry{$\EE{E}[\lambda_\ell-\lambda_{\ell-1}]$}
\node at (rel axis cs:0.85,0.52) {$\alpha\approx 2$};
\addplot[blue,dashed,mark=*,domain=1:1e-30]
	table[x index =0, y index=1]{results/eg3/mean.txt};
	\addlegendentry{$\EE{E}[\lambda_\ell]$}
\addplot[red,mark=x,mark size=4pt,domain=1:1e-30]
	table[x index =0, y index=1]{results/eg3/varl.txt};
	\addlegendentry{$\mathrm{var}[\lambda_\ell-\lambda_{\ell-1}]$}
\node at (rel axis cs:0.75,0.08) {$\beta\approx 4$};
\addplot[red,dashed,mark=x,mark size=4pt,domain=1:1e-30]
	table[x index =0, y index=1]{results/eg3/var.txt};
	\addlegendentry{$\mathrm{var}[\lambda_\ell]$}
\end{axis}
\end{tikzpicture}
\caption{Means and variances of $\lambda_\ell$ and $\lambda_\ell-\lambda_{\ell-1}$.}
\label{figc:h_mean_v}
\end{subfigure}
\hfill
\begin{subfigure}{0.5\textwidth}
\begin{tikzpicture}[scale=0.6]
\begin{axis}[
   grid = both,
   xlabel = {Mesh level, $\ell$},
   ylabel = {Time, ms},
   every axis y label/.style={at={(1.1,0.5)},rotate=-90},
   axis y line*=right,
   minor tick num=0,
   ymode=log,
   log basis y={2},
     xticklabel={%
        \pgfmathtruncatemacro{\IntegerTick}{\tick}%
        \pgfmathprintnumberto[verbatim,fixed,precision=3]{\tick}\tickAdjusted%
        \pgfmathparse{\IntegerTick == \tickAdjusted ? 1: 0}%
        \ifnum\pgfmathresult>0\relax$\IntegerTick$\else\fi%
    },
   legend style={at={(0.3,0.7)}, anchor=north,legend columns=1, draw=none},
   legend cell align={left},
]
\addplot[blue,mark=*x
,domain=1:1e-30]
	plot[error bars/.cd, y dir = both, y explicit]
	table[x index =0, y index=1, y error index=2,]{results/eg3/time.txt};\label{figc:v_time1}
\end{axis}

\begin{axis}[
   grid=both,
   ylabel={Iterations},
   every axis y label/.style={at={(-.15,0.5)},rotate=90},
   axis y line*=left,
   xticklabels={},
   xtick={},   
   ytick={2,3,4},
   legend style={at={(0.4,0.98)}, anchor=north,legend columns=1, draw=none},
   legend cell align={left},
]
\addlegendimage{/pgfplots/refstyle=plot_cpu}\addlegendentry{comp. time}
\addplot[red,mark=x,mark size=4pt,domain=1:1e-30]
	plot[error bars/.cd, y dir = both, y explicit]
	table[x index =0, y index=1, y error index=2,]{results/eg3/it.txt};\addlegendentry{RQ iterations}
\node at (rel axis cs:0.6,0.7) {$\gamma\approx 2.23$};
2/1.63-1\end{axis}
\end{tikzpicture}
		\caption{Computational times and average RQI.}
		\label{figc:v_it1}
\end{subfigure}
\caption{MLMC method using tgRQI and SUPG for Test Case III with random velocity and random conductivity: (a) Mean (blue) and variance (red) of the eigenvalue $\lambda_\ell$ (dashed) and of $\lambda_\ell-\lambda_{\ell-1}$ (solid); (b) computational times for one multilevel difference (blue) and average number of RQIs (red) on each level. Where shown, the error bars represent $\pm$ one standard deviation.}
\label{figc:v_mlmc3}
\end{figure}

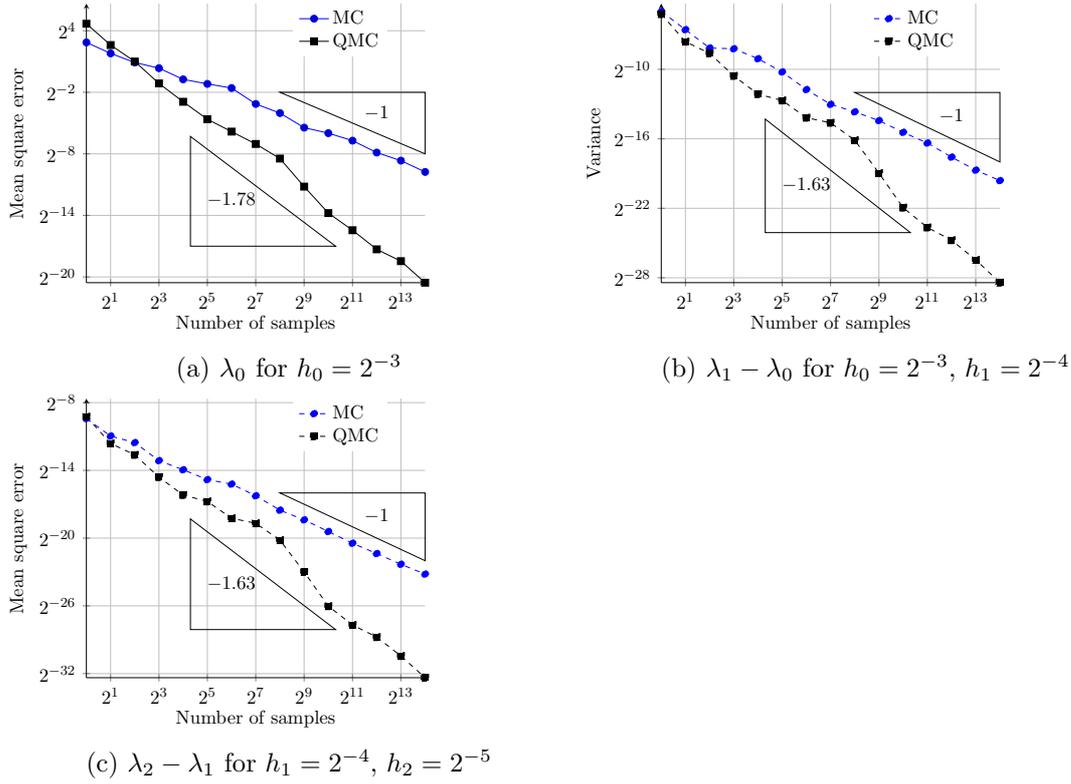
\begin{figure}[t!]

\begin{subfigure}{0.5\textwidth}
\begin{tikzpicture}[scale=0.65]
\begin{axis}[
   axis lines = left,
   grid = both,
   xlabel = {Number of samples},
	ylabel = {Mean square error},
   minor tick num=0,
   ylabel near ticks,
	ymode=log,
	xmode=log,
log basis y={2},
	log basis x={2},
   every axis y label/.style=
{at={(-0.2,0.5)},rotate=90},
   y tick label style={
       /pgf/number format/.cd,
           fixed,
           precision=100,
       /tikz/.cd,
   },
   legend style={at={(0.75, 1)}, anchor=north,legend columns=1,{draw=none}},
   legend cell align={left},
   ymax=1E2,
]
\addplot plot[mark=*,blue,domain=1:1e-30]
        file {results/qmc/mc_var.txt};
	\addlegendentry{MC}
\addplot plot[mark=square*,mark options={draw=black,fill=black},black,domain=1:1e-30]
        file {results/qmc/qmc_var.txt};
        \addlegendentry{QMC}

\draw (8,-2)
  -- (14,-2) 
  -- (14,-8)
  -- cycle;
\node at (12,-4) {$-1$};

\draw (10.3,-17)
  -- (4.3,-17) 
  -- (4.3,-6.3)
  -- cycle;
\node at (6,-12.5) {$-1.78$};

\end{axis}
\end{tikzpicture}
\caption{$\lambda_0$ for $h_0=2^{-3}$}
\end{subfigure}
\begin{subfigure}{0.5\textwidth}
\begin{tikzpicture}[scale=0.65]
\begin{axis}[
   axis lines = left,
   grid = both,
   xlabel = {Number of samples},
	ylabel = {Variance},
   minor tick num=0,
   ylabel near ticks,
	ymode=log,
	xmode=log,
log basis y={2},
	log basis x={2},
   every axis y label/.style=
{at={(-0.2,0.5)},rotate=90},
   y tick label style={
       /pgf/number format/.cd,
           fixed,
           precision=100,
       /tikz/.cd,
   },
   legend style={at={(0.75,1)}, anchor=north,legend columns=1,{draw=none}},
   legend cell align={left},
   ymax=5e-2,
]
 \addplot plot[mark=*,mark options={draw=blue,fill=blue},,blue,dashed,domain=1:1e-30]
        file {results/qmc/mc_var_l1.txt};
	\addlegendentry{MC}
\addplot plot[mark=square*,mark options={draw=black,fill=black},black, dashed,domain=1:1e-30]
        file {results/qmc/qmc_var_l1.txt};
        \addlegendentry{QMC}

\draw (8,-12)
  -- (14,-12) 
  -- (14,-18)
  -- cycle;
\node at (12,-14) {$-1$};

\draw (10.3,-24.1)
  -- (4.3,-24.1) 
  -- (4.3,-14.3)
  -- cycle;
\node at (6,-20) {$-1.63$};

\end{axis}
\end{tikzpicture}
\caption{$\lambda_1 - \lambda_0$ for $h_0 = 2^{-3}$, $h_1=2^{-4}$}
\end{subfigure}
\begin{subfigure}{0.5\textwidth}
\begin{tikzpicture}[scale=0.65]
\begin{axis}[
   axis lines = left,
   grid = both,
   xlabel = {Number of samples},
	ylabel = {Mean square error},
   minor tick num=0,
   ylabel near ticks,
	ymode=log,
	xmode=log,
log basis y={2},
	log basis x={2},
   every axis y label/.style=
{at={(-0.2,0.5)},rotate=90},
   y tick label style={
       /pgf/number format/.cd,
           fixed,
           precision=100,
       /tikz/.cd,
   },
   legend style={at={(0.75,1)}, anchor=north,legend columns=1,{draw=none}},
   legend cell align={left},
   ymax=5e-3,
]
\addplot plot[mark=*,mark options={draw=blue,fill=blue},blue,dashed,domain=1:1e-30]
        file {results/qmc/mc_var_l2.txt};
	\addlegendentry{MC}
\addplot plot[mark=square*,mark options={draw=black,fill=black},black, dashed,domain=1:1e-30]
        file {results/qmc/qmc_var_l2.txt};
        \addlegendentry{QMC}

\draw (8,-16)
  -- (14,-16) 
  -- (14,-22)
  -- cycle;
\node at (12,-18) {$-1$};

\draw (10.3,-28.1)
  -- (4.3,-28.1) 
  -- (4.3,-18.3)
  -- cycle;
\node at (6,-24) {$-1.63$};
\end{axis}
\end{tikzpicture}
\caption{$\lambda_2 - \lambda_1$ for $h_1 = 2^{-4}$, $h_2=2^{-5}$}
\end{subfigure}
\caption{Convergence of QMC and MC methods using tgRQI and SUPG for Test Case III with random velocity and conductivity. Plots (a), (b), (c) give the MSE of estimators versus sample sizes for grid sizes $h = 2^{-3}, 2^{-4}, 2^{-5}$, respectively. Blue lines with circles and black lines with squares indicate the MSE for MC and QMC, respectively. Dashed lines and solid lines correspond to the MSE of the estimated multilevel differences and the MSE of the estimated eigenvalues, respectively.}
\label{fig:qmc_error}
\end{figure}

\begin{figure}[t!]
\centering
\begin{tikzpicture}[scale=0.6]
\begin{axis}[
   axis lines = left,
   grid = both,
   xlabel = root mean square error,
	ylabel = {CPU time},
   minor tick num=0,
   ylabel near ticks,
	ymode=log,
	xmode=log,
log basis y={2},
	log basis x={2},
   every axis y label/.style=
{at={(-0.2,0.5)},rotate=90},
   y tick label style={
       /pgf/number format/.cd,
           sci,
           precision=100,
       /tikz/.cd,
   },
   legend style={at={(1.1,1.05)}, anchor=north,legend columns=1, draw=none},
   legend cell align={left},
]

\addplot plot[mark=triangle*,blue,domain=1:1e-30]
        file {results/eg3/mse_mc_new.txt};
	\addlegendentry{MC with Rayleigh quotient and SUPG}
\addplot plot[mark=triangle*,black,domain=1:1e-30]
        file {results/eg3/mse_new.txt};
	\addlegendentry{MLMC with Rayleigh quotient and SUPG}
 \addplot plot[mark=triangle*,red,domain=1:1e-30]
        file {results/qmc/mse_qmc_new.txt};
	\addlegendentry{MLQMC with Rayleigh quotient and SUPG}
\end{axis}
\end{tikzpicture}
\caption{CPU time vs.~root mean square error of the estimators in Test Case III.}
\label{fig:cpu_mse_v_random}
\end{figure}
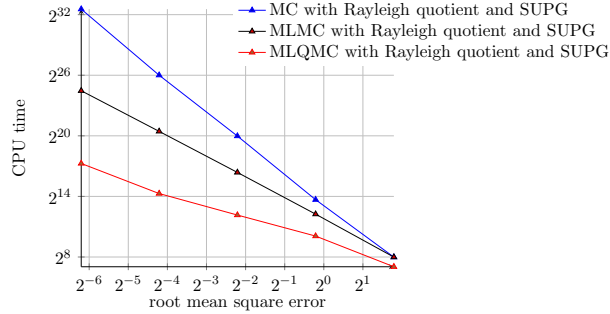

\section{Conclusion}
\label{sec:conc}
In this paper we have considered and developed various MLMC methods for stochastic convection-diffusion eigenvalue problems in~2D.
First, we established certain error bounds on the variational formulation of the eigenvalue problem under assumptions such as eigenvalue gap, boundedness, and other approximation properties.
Then we presented the MLMC method based on a hierarchy of geometrically refined meshes with and without homotopy. We also discussed how to improve the computational complexity of MLMC by replacing Monte Carlo samples with QMC samples. At last, we provided numerical results for three test cases with different convection velocities.

Test Case I shows that, for low convection velocity, all variants of the MLMC method (based on a Galerkin FEM discretization of the PDE) achieve optimal $O(\varepsilon^{-2})$ complexity, including the one with homotopy. In Test Case II with a high convection velocity, the homotopy-based MLMC does not work anymore --- at least without increasing the number of levels --- and MLMC based on Galerkin FEM has severe stability restrictions, preventing the use of a large number of levels. This restriction can be circumvented easily by using stable SUPG discretizations. Numerical experiments suggest that MLMC with SUPG achieves the optimal $O(\varepsilon^{-2})$ complexity and is 10-20 times faster than the Galerkin FEM-based versions for the same level of accuracy. In Test Case III, we considered both the conductivity and the convection velocity as random fields and compared the performance of MLMC and MLQMC. In this example, both MLMC and MLQMC deliver computational complexities that are close to the optimal complexities predicted by the theory, while the rate of the computational complexity of MLQMC outperforms that of MLMC. 


\section{Appendix: Bounding the constants in the FE error}
The results in Theorem~\ref{thm:fe} 
follow from the Babu\v{s}ka--Osborn theory \cite{BO91}. In this appendix we show that the constants can be bounded independently of the stochastic parameter.

The Babu\v{s}ka--Osborn theory studies  how the
continuous solution operators $T_{\bsomega}$, $T_{\bsomega}^* : V \to V$,
which for $f, g \in V$ are defined by
\begin{align*}
\calA(\bsomega; T_{\bsomega} f, v) \,&=\, \langle f, v \rangle
\quad\text{for all } v \in V,
\\
\calA(\bsomega; w, T_{\bsomega}^* g) \,&=\, \langle w, g \rangle
\quad \text{for all } w \in V,
\end{align*}
are approximated by the discrete operators 
$T_{\bsomega, h}, T_{\bsomega, h}^* : V_h \to V_h$,
\begin{align*}
\calA(\bsomega; T_{\bsomega, h} f, v_h) \,&=\, \langle f, v_h \rangle 
\quad\text{for all } v_h \in V_h,
\\
\calA(\bsomega; w_h, T_{\bsomega, h}^* g) \,&=\, \langle w_h, g \rangle
\quad \text{for all } w_h \in V_h.
\end{align*}

We summarize the pertinent details here.
First, we introduce:
\begin{align*}
\eta_h(\lambda(\bsomega)) \,&\coloneqq\, 
\sup_{u \in \calE(\lambda(\bsomega))} \inf_{\chi \in V_h} \|u - \chi\|_V,
\\
\eta_h^*(\lambda(\bsomega)) \,&\coloneqq\, 
\sup_{v \in \calE^*(\lambda(\bsomega))} \inf_{\chi \in V_h} \|v - \chi\|_V,
\end{align*}
where the eigenspaces are defined by
\begin{align*}
\calE(\lambda(\bsomega)) \,&\coloneqq\, \{ u : u \text{ is an eigenfunction of \eqref{eq:varevp} corresponding
to } \lambda(\bsomega), \|u\|_{L^2} = 1\},\\
\calE^*(\lambda(\bsomega)) \,&\coloneqq\, \{ u^* : u^* \text{ is an eigenfunction of \eqref{eq:dualevp} corresponding
to } \lambda(\bsomega), \|u^*\|_{L^2} = 1\}.
\end{align*}

The result for the eigenfunction \eqref{eq:u-fe-err} is given by \cite[Thm.~8.1]{BO91},
which gives
\begin{equation}
    \|u(\bsomega) - u_h(\bsomega)\|_V \,\leq\, 
    C_u(\bsomega) \eta_h(\lambda(\bsomega)),
\end{equation}
for a constant $C(\bsomega)$ defined below.
Since $\lambda(\bsomega)$ is simple, the best approximation property of $V_h$ in $H^2(D)$
followed by Theorem~\ref{thm:u-reg} gives
\begin{equation}
\label{eq:eta-bound}
\eta_h(\lambda(\bsomega))
    \,\leq\, C_{\mathrm{BAP}} \|u(\cdot, \bsomega)\|_{H^{2}} \,h
    \,\leq\, C_{\mathrm{BAP}} C_{2, \lambda} |\lambda(\bsomega)| \,h
    \,\leq\, C_{\mathrm{BAP}} C_{2, \lambda} \lambdahat \,h,
\end{equation}
where the best approximation constant $C_\mathrm{BAP}$ is independent of $\bsomega$.
In the last inequality we have also used that 
$\lambda(\bsomega)$ is continuous on the compact domain $\Omega$,
thus can be bounded uniformly by 
\begin{equation}
\label{eq:lambda-bound}
\lambdahat \,\coloneqq\, \max_{\bsomega \in \Omega} |\lambda(\bsomega)| \,<\, \infty.
\end{equation}

Hence, all that remains is to bound $C_u(\bsomega)$, uniformly in $\bsomega.$
This constant is given by
\begin{align*}
C_u(\bsomega) \,=\, 
&\|T_{\bsomega}\| \,\|u(\bsomega)\|_V 
\bigg( 1 + \frac{1}{\amin}\bigg) \frac{\length(\Gamma(\bsomega))}{\pi}
\\
&\cdot \sup_{\substack{z \in \Gamma\\h > 0}} 
\| R_z(T_{\bsomega, h}) \| 
\sup_{z \in \Gamma(\bsomega)} \|R_z(T_{\bsomega})\|,
\end{align*}
where $\Gamma(\bsomega)$ is a circle in the complex plane enclosing the eigenvalue 
$\mu(\bsomega) = 1/\lambda(\bsomega)$ of $T_{\bsomega}$, but no other points in the spectrum $\sigma(T_{\bsomega})$,
and for an operator $A$ and $z \in \rho(A) = \C \setminus \sigma(A)$, the \emph{resolvent set} of $A$, 
we define the resolvent operator $R_z(A) \coloneqq (z - A)^{-1}$.
Hence, all that remains is to show that $C_u(\bsomega)$ is bounded from above uniformly in $\bsomega$.

First, by the Lax--Milgram Lemma and the Poincar\'e inequality 
$T_{\bsomega}$ is bounded with $\|T_{\bsomega}\| \leq C_\mathrm{Poin}/ \amin$.
Also, since $\calA$ is coercive \eqref{eq:coerc} and $u(\bsomega)$ 
satisfies \eqref{eq:varevp}, using \eqref{eq:lambda-bound} we have the bound
\[
\|u(\bsomega)\|_V \, \leq \sqrt{\frac{\lambdahat}{\amin}}.
\]

Consider next the norm of the resolvent $\|R_z(T_{\bsomega})\|$ for $\bsomega \in \Omega$.
Note that care must be taken here since the domain for $z$, namely the resolvent set,
changes with $\bsomega$.

Let $\Gamma(\bsomega) = \{z \in \C : |z - \mu(\bsomega)| = \gamma/2\}$, where 
$\gamma$ is a lower bound on the spectral gap for $\mu$
\[
\gamma \,\coloneqq\, \inf_{\bsomega \in \Omega} \dist(\mu(\bsomega), \sigma(T_{\bsomega}) \setminus \{ \mu(\bsomega)\})
\,>\, 0.
\]
So that for each $\bsomega \in \Omega$ the circle $\Gamma(\bsomega)$ encloses only $\mu(\bsomega)$
and no other eigenvalues of $T_{\bsomega}$.
Then $z \in \Gamma(\bsomega)$ can be parametrised by both $\bsomega \in \Omega$ and $\theta \in [0, 2\pi]$,
\[
z \,=\, z(\bsomega, \theta) \,=\, \mu(\bsomega) + \frac{\gamma}{2}e^{i\theta} \,\in\, \Gamma(\bsomega).
\]
Clearly $z(\cdot, \cdot)$ is continuous in both $\bsomega$ and $\theta$ and belongs to the resolvent set, 
$z(\bsomega, \theta) \in \rho(T_{\bsomega})$, for all $\bsomega \in \Omega$ and $\theta \in [0, 2\pi]$.
Thus, $R_{z(\bsomega, \theta)}(T_{\bsomega})$ is bounded for all $\bsomega \in \Omega$ and $\theta \in [0, 2\pi]$.

For all $\bsomega \in \Omega$ we have the bound
\[
\sup_{z \in \Gamma(\bsomega)} \|R_z(T_{\bsomega})\| 
\,=\, \sup_{\theta \in [0, 2\pi]} \|R_{z(\bsomega, \theta)}(T_{\bsomega})\|
\,\leq\, \sup_{\substack{\theta \in [0, 2\pi]\\\bsomega \in \Omega}} \|R_{z(\bsomega, \theta)}(T_{\bsomega})\|.
\]

Now, in general the resolvent $R_z(A)$ is continuous in both arguments, $z$ and the (compact) operator $A$
(in fact it is holomorphic, see \cite[Theorem IV-3.11]{Kato84}).
Since $z$ is continuous in both $\theta$ and $\bsomega$ and $T_{\bsomega}$ is continuous in $\bsomega$,
it follows that  $R_{z(\bsomega, \theta)} (T_{\bsomega})$ is continuous in $\theta$ and $\bsomega$.
In turn, the norm $\|R_{z(\bsomega, \theta)}(T_{\bsomega})\|$
is also continuous in $\theta$ and $\bsomega$. Thus, $\|R_{z(\bsomega, \theta)}(T_{\bsomega})\|$
is bounded and continuous on the compact domain $[0, 2\pi] \times\Omega$, 
and so the maximum is attained for some 
$(\theta^*, \bsomega^*) \in [0, 2\pi] \times \Omega$, i.e., for all $\bsomega \in \Omega$
\[
\sup_{z \in \Gamma(\bsomega)}\|R_{z(\bsomega, \theta)}(T_{\bsomega})\| 
\,\leq\, \max_{\substack{\theta \in [0, 2\pi]\\\bsomega\in \Omega}} \|R_{z(\bsomega, \theta)}(T_{\bsomega})\|
\,=\, \|R_{z(\bsomega^*, \theta^*)}(T_{\bsomega^*})\| \,<\, \infty.
\]
For $h$ sufficiently small $\|R_z(T_{\bsomega, h})\|$ can be bounded in a similar way.

For $\Gamma(\bsomega)$ deined above $\length(\Gamma(\bsomega)) = \pi \gamma$, 
which is obviously independent of $\bsomega$. 
Thus, $C_u(\bsomega) \leq C_u < \infty$ for all $\bsomega \in \Omega$, where
\[
C_u\,\coloneqq\, \gamma \frac{C_\mathrm{Poin}}{\amin} \sqrt{\frac{\lambdahat}{\amin}}
\bigg( 1 + \frac{1}{\amin}\bigg)
\max_{\substack{\theta \in [0, 2\pi]\\\bsomega\in \Omega}} \|R_{z(\bsomega, \theta)}(T_{\bsomega})\|
\sup_{\substack{\theta \in [0, 2\pi]\\\bsomega\in \Omega\\h > 0}} \|R_{z(\bsomega, \theta)}(T_{\bsomega, h})\|
\]
is independent of $\bsomega$.

For the eigenvalue error \eqref{eq:lam-fe-err} we follow the proof of \cite[Theorem~8.2]{BO91}.
Since $\lambda(\bsomega)$ is simple, from Theorem~7.2 in \cite{BO91}, the eigenvalue
error is bounded by
\begin{align*}
|\lambda(\bsomega) - \lambda_h(\bsomega)|
&\leq\, 
C_\lambda(\bsomega) \eta_h(\lambda(\bsomega)) \eta_h^*(\lambda(\bsomega))
\,\leq\, C_\lambda(\bsomega) C_\eta h^2,
\end{align*}
where in the second inequality we have used \eqref{eq:eta-bound} and the equivalent 
bound for the dual eigenvalue, combining the two constants into $C_\eta$. 
By following \cite{BO91}, the constant $C_\lambda(\bsomega)$ can be bounded independently of $\bsomega$ in a similar way to $C_u(\bsomega)$.

\paragraph{Acknowledgements.}
T.~Cui and S.~Polishchuk acknowledge support from the Australian Research Council, under grant number CE140100049 (ACEMS). S.~Polishchuk acknowledges support from the School of Mathematics at Monash University. T.~Cui acknowledges travel support offered by the IWR at Heidelberg University. T.~Cui and R.~Scheichl further acknowledge support from the Erwin Schr\"odinger Institute for Mathematics and Physics at the University of Vienna. H.~De Sterck acknowledges support from NSERC of Canada.

\subsection*{Declarations}

\noindent\textbf{Conflict of interest.} The authors declare that they have no conflict of interest.

\noindent\textbf{Data Availability.} Not applicable.

\bibliographystyle{plain}

{\small

}

\end{document}